\documentclass[11pt, a4paper]{article}

\usepackage[intlimits]{amsmath}
\usepackage{amsfonts,amssymb}
\usepackage[utf8]{inputenc}
\usepackage[english]{babel}
\usepackage{amsthm}
\usepackage{verbatim}
\usepackage{graphicx}
\usepackage{transparent}
\usepackage{url}

\usepackage{color}
\usepackage{xcolor}
\usepackage{hyperref}

\newcommand{\Rb}{\mathbb R}
\newcommand{\Nb}{\mathbb N}
\newcommand{\Zb}{\mathbb Z}

\newcommand{\eps}{\varepsilon}

\newcommand{\lb}{\left(}
\newcommand{\rb}{\right)}

\newcommand{\diff}[2]{\frac{\partial #1}{\partial #2}}

\newcommand{\enbrace}[1]{\lb #1 \rb}

\newcommand{\setdef}[2]{\left\{ #1\ \left|\ #2 \right.\right\}}

\newcommand{\seq}[1]{ \{ #1 \} }

\newcommand{\vmod}[1]{\left| #1 \right|}

\newcommand{\Zplus}{\Zb_{\geq 0}}

\newcommand{\Zplusinf}{\bar{\Zb}_{\geq 0}}

 %for eqs
 %for inhoms
 %for solutions

\numberwithin{equation}{section}

%--------------------------

%\newcommand{\lmbth}
\newcommand{\dvc}{ {v'(c)} }
\newcommand{\dvcj}{ {v'(f^j(c))} }

\newcommand{\maxv}{\Gamma_0} 
\newcommand{\maxvprime}{\Gamma_1 } %\max_{x\in \mfd} \vmod{v'(x)} }
\newcommand{\maxdv}{\Gamma_1 } %\max_{x\in \mfd} \vmod{v'(x)} }
\newcommand{\mindvcj}{\Gamma_{1,c}} %\min_{0\leq j \leq \powk -1} \vmod{v'(f^j(c))} 
\newcommand{\maxdf}{ \Lambda_1 }
\newcommand{\maxddf}{ \Lambda_2 }

\newcommand{\maxdfper}{ \Lambda_{1,c} }

\newcommand{\powk}{ k_0 }

\newcommand{\Cbdistort}{C_1 }
\newcommand{\Csecest}{ C_2 }
\newcommand{\Cdv}{\Gamma_2}%{C_2}
%{C_2}
%{C_2}
\newcommand{\Cdvtmp}{\Gamma }%{C_2}

%{C_2}
\newcommand{\Ctaylorthpower}{C_3 }

\newcommand{\holder}{H\"{o}lder }

\newcommand{\mfd}{S^1}

\newcommand{\vmodif}{\tilde{v}}
\newcommand{\pinconst}{\varkappa}
\newcommand{\prexp}{r}

%%%%%%%%%%%%%%%%%%% BEDFORD defs

\newcommand{\code}{\tilde{\pi}} % puts a code into the interval 
\newcommand{\codeE}{\pi}   % puts a code into repeller 

\newcommand{\nbr}{n_0}   % number of branches
\newcommand{\invbr}{f^{-1}}   % number of branches

%%%%%%%%%%%%%%%%%%%%%%%%%

\newcommand{\dfx}[1]{ (f^{#1})' (x) }
\newcommand{\dfxx}[2]{ (f^{#1})' (#2) }
\newcommand{\dfxp}[1]{ f'(#1) }            % p means prime, first order

\newcommand{\ddif}[1]{ #1''}
\newcommand{\dfxpdb}[1]{ \ddif{ f } \enbrace{#1} }
\newcommand{\dfxdb}[2]{  \ddif{ (f^{#1} ) } \enbrace{#2} }

\newcommand{\dfxth}[1]{ ( \dfx{#1}   )^\theta }
\newcommand{\dfxoth}[2]{ ( \dfxx{#1}{#2})^{1-\theta} }  % theta = 1-\theta
\newcommand{\dfxthx}[2]{ ( \dfxx{#1}{#2} )^\theta }

\newcommand{\dfxthxp}[1]{ ( \dfxp{#1} )^\theta }

%\input{macro_genderiv}
%\input{macro_linderiv}

%%%%%%%%%%%%%%%%%%%%%%%%%%%

\newcommand{\lfth}{  1 -\lambda^{ -\theta  }  }
\newcommand{\lfthk}{  1 -\lambda^{ -\powk\theta  }  }
\newcommand{\lfoth}{ 1 -\lambda^{ \theta-1 } }
\newcommand{\lfothk}{ 1 -\lambda^{ -\powk(1-\theta) } }

\newcommand{\lfboth}{ 1 -b^{ \theta-1 } }

%-------------------------

\DeclareMathOperator{\img}{Image}
\DeclareMathOperator{\dist}{dist}
\DeclareMathOperator{\diam}{diam}

  % for consts to be named afterwards
  
\DeclareMathOperator{\argmin}{argmin}
\DeclareMathOperator{\argmax}{argmax}

\theoremstyle{plain}
 \newtheorem{theorem}{Theorem}
 
 \newtheorem{statement}{Statement}[section]
 \newtheorem{lm}[statement]{Lemma}
 \newtheorem*{lmnon}{Lemma}
 
 \newtheorem{prop}[statement]{Proposition}
 \newtheorem{corollary}[statement]{Corollary}
\theoremstyle{definition}
    \newtheorem{deff}{Definition}
    
\theoremstyle{remark} \newtheorem{rem}[statement]{Remark}

\begin{titlepage}
    %\title{ \holder continuity of $\theta$-twisted cohomological equation }
    \date{}

    \title{ \holder properties of 
         Weierstrass-like  solutions
        of $\theta$-twisted cohomological equations }
%    \date{\today}

    \author{Dmitry Todorov \\
    {dmitry.todorov@ens.fr} \\
 \multicolumn{1}{p{.7\textwidth}}{\centering\emph{
     D.M.A., UMR 8553, \'{E}cole Normale Sup\'{e}rieure, Paris, France;  \\
 Department of Mathematics and Mechanics, Saint Petersburg State University, Saint Petersburg, Russia
 }}
    } 
\end{titlepage}

%D.M.A., UMR 8553, École Normale Supérieure, 75005 Paris, France

\begin{document}

%\date{\today}
%\input{explain}

\maketitle

\begin{abstract}
    It is proved that bounded solutions
    of modified ($\theta$-twisted) cohomological equations
    for expanding circle maps
    are $\theta$-\holder continuous 
    but are not $(\theta+\gamma)$-\holder continuous
    for every $\gamma>0$ at almost every point.
    This gives new examples of 
    ``nonlinear'' Weierstrass-like functions
    for which the optimal \holder exponent at most points is known. 
\end{abstract}

% 28A80, 37D35, 37C45, 28A78  

%MSC 2010
% 28A80   	Fractals [See also 37Fxx]
% 37D35   	Thermodynamic formalism, variational principles, equilibrium states
% 28A78   	Hausdorff and packing measures
% 37C45   	Dimension theory of dynamical systems

% 37E10   	Maps of the circle
% 37A99   	None of the above, but in this section (erg theory)

\tableofcontents
%\clearpage

\section{Introduction}

In 1895  Weierstrass \cite{WEIER} constructed an example of a
continuous nowhere differentiable function
\begin{equation*}
    W(x) = \sum_{n=0}^{\infty} a^n \cos (2 \pi b^n  x)
    , \quad x\in \Rb,
    \label{eq:class_weier}
\end{equation*}
where $0<1/b<a<1$.
%After that a lot of research was done on functions like 
%this
% \cite{ANTI_HOL_WAVELETS, WRANDOM_PHASE, ANFRAC,
%WZYG, SPACE_FILL,HAUS_GRAPHS,HAUS_SETS} 
   % wavelets, dimensions
Hardy \cite{HARDY} 
for
$\theta = - \log a / \log b$
proved that
at any point $x$
this function
is $\theta$-\holder 
but
it is not $(\theta+\gamma)$-\holder for any $\gamma > 0$.

Hardy's result has been generalized \cite{BOUSCH_MANUSCRIPT} to 
the case when $\cos$ is replaced by any 
non-constant Lipschitz function $v$ from an open dense
subset of the space of
1-periodic Lipschitz functions $g:\Rb \to \Rb$ (with its standard norm).

%Therefore now (after \cite{BOUSCH_MANUSCRIPT})
%and estimates on the dimension implied that

%   [ I state it in this form because it is the way it is written 
%   in \cite{WRANDOM_PHASE} which is a published paper (contrary to the manuscript
%   where a \holdere assumption on $g$ is stated to be enough) ]

Some other properties and
examples of Weierstrass-like functions
were 
considered in
\cite{ANTI_HOL_WAVELETS, WRANDOM_PHASE, ANFRAC,
 SPACE_FILL,WZYG,BOUSCH_MANUSCRIPT, CALOR_BOUSCH}.
 %  WZYG    that for a = 1/b lays in the Zygmund class

There was a parallel research in the fractal dimension
of graphs of functions like $W(x)$
(see \cite{KAPLAN_YORKE,HAUS_GRAPHS,HAUS_SETS,BARAN_DIM_W, KELLER_DIM,SHEN}).
%
%
%The most recent results are \cite{BARAN_DIM_W, KELLER_DIM}.
%In \cite{KAPLAN_YORKE,HAUS_GRAPHS,HAUS_SETS}
It is known that 
the bounds 
on a fractal dimension (either box-counting or Hausdorff)
of the graph of a function
give bounds 
on the global \holder exponent
of the function
 (see \cite{FALCONER_FRAC,HAUS_GRAPHS,ANTI_HOL_WAVELETS}, for example). 

It is important that all the research mentioned above  
applies only to the linear case when the multipliers 
in front of $\cos$ (or its replacement) and 
the one in the argument of $\cos$ are 
exactly $n$-th powers of some numbers.

For $0<\theta\leq 1$, 
%(piecewise-)
$C^2$-smooth 
$f:S^1 \to S^1$ with $f'(x) > \lambda > 1$,
$\forall x\in S^1$
and  $v\in C^{1+\eps}(S^1)$ 
consider
\begin{equation} \label{eq:alpha_formula_intro}
    \alpha(x) = -\sum_{i=0}^{\infty}
    \frac{v(f^i(x)) }{ \dfxth{i+1} }%,\quad x\in\mfd  . 
    .
\end{equation}
Functions of this kind are similar to the Weierstrass one (if one 
considers $S^1$ as $\Rb/\Zb$)
and for $\theta=1$ they correspond to solutions
of twisted cohomological equations \eqref{eq:thetaTwistCohom} 
which % non-restrictive clause
are important in the study of
linear response for one-dimensional chaotic 
dynamical systems (see \cite{BALADI_SMANIA_PE_LINRESP}).

For $\theta=1$ the modulus of continuity of $\alpha$ is
thoroughly studied in \cite{AMANDA_THESIS}.
When $f$ is an Anosov diffeomorphism,
\holder properties of $\alpha$ 
are studied in \cite{WALK_ANOSOV}. 

Note that Weierstrass-like functions
are related to certain two-dimensional
discrete dynamical systems.
In particular to compute fractal 
dimensions of the graph of $W$
%for some values of $a$ and $b$
it is useful to interpret it 
as a repellor of following dynamical system (see \cite{BARAN_DIM_W},
for example):
\begin{gather*}
G:(\Rb/\Zb) \times \Rb \to (\Rb/\Zb) \times \Rb, \\
    G(x,y) = \enbrace{b x (\mod 1), \frac{y - v(x)}{a} }.
\end{gather*}
Papers \cite{BEDFORD_BOX,BEDFORD,OTANI} use
this approach and give expressions
for fractal dimension(s)
of the graph of $\alpha$ for the case of one-dimensional
(piecewise)-expanding maps. 
However these expressions involve 
quantities from thermodynamical
formalism 
%like topological pressure %or Kolmogorov-Sinai entropy
that are difficult to compute explicitly in general.
%except for some very special cases.
 In fact even if dimension can be computed explicitly, 
 if one is interested in knowing 
 \holder exponents at most points (rather than
 knowing only a global exponent), information about
 fractal dimension of the graph of the function is not enough.

In this paper the study of \holder continuity properties 
of $\alpha$ for the case $0<\theta<1$ is presented.
%We consider the situation when $f$ is a $C^2$ endomorphism of the circle.
%, close to linear.
This gives new examples of 
dynamically-defined functions that
have at almost every point 
a globally prescribed \holder exponent that
can not be improved.

The paper demonstrates two ways of studying \holder properties 
of functions like $\alpha$.
First uses the repellor representation of 
the graph of $\alpha$ and
coding of the dynamics of $f$
by a shift on a symbolic space.
The second one is rather elementary, it is based on 
bounded distortion and density of most trajectories. 

Interestingly, an upper bound 
for the optimal \holder exponent
is much simpler to obtain
by following the first way and a lower bound -- by following
the second way.
%in particular it does not
%use dimension estimates.       

%The elementarity of the proof (and strong restrictions on 
%$v$ and $f$ required for the final result to hold) 
%are due to the fact that sophisticated methods
%used for the linear case $f(x) = b x (\mod 1)$ do not generalize
%(at least, immediately) to the case of nonlinear $f$.

%\input{defs}
\section{General definitions and main results}
%\label{sec:gendef}

Let $f$ be a $C^{2}$-smooth endomorphism of 
$\mfd$ %= [0,1]/\sim$ 
such
that $\lambda = \min_{y\in\mfd} f'(y)  > 1$.
Assume $0< \eps \leq 1$ and $0<\theta<1$.
Let $v:\mfd \to \Rb$ be a $C^{1+\eps}$ function. 

For a positive natural number $r$ define
$ E_r(x) = rx (\mod 1). $

The following Lemma will be
used to state the main result and
in proofs:
\begin{lm}
   % \label{lm:uniquesol}   
    \label{lm:exist_unique}
    For every natural number $\prexp \geq 1$
    there exists  only one bounded solution 
    $\alpha:\mfd \to \Rb$
    to the $\theta$-twisted cohomological equation 
    \begin{equation}
        \label{eq:thetaTwistCohom}
        v( E_r(x) ) = \alpha(f(x)) - (f'(x))^{\theta} \alpha(x).
    \end{equation}
    This solution is given by the following formula:
    \begin{equation}      \label{eq:cohomSolFormula0} 
        \alpha(x) = -\sum_{i=0}^{\infty}
        \frac{v( E_r( f^i(x) ) ) }
        { \dfxth{i+1} },\quad x\in\mfd   
        .
    \end{equation}               
\end{lm}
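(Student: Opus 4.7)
The plan is to prove existence by showing the series \eqref{eq:cohomSolFormula0} converges and defines a bounded solution, and to prove uniqueness by analyzing the associated homogeneous equation.

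For existence, first I would check convergence. Since $f'(y) \geq \lambda > 1$ on $\mfd$, the chain rule gives $\dfx{i+1} \geq \lambda^{i+1}$, hence the $i$-th term in \eqref{eq:cohomSolFormula0} is bounded in absolute value by $\|v\|_\infty \lambda^{-\theta(i+1)}$. Because $0<\theta<1$ and $\lambda>1$, this is a convergent geometric bound, so the series defines a bounded (in fact continuous) function $\alpha$ on $\mfd$ with $\normB{\alpha} \leq \normB{v}/(\lambda^\theta - 1)$. Next I would verify the functional equation directly: substitute $f(x)$ in place of $x$, reindex the sum by $j=i+1$, and use the chain-rule identity $(f^j)'(f(x)) = \dfxx{j+1}{x}/f'(x)$ to pull a factor $f'(x)^\theta$ out of every term. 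One obtains
\begin{equation*}
    \alpha(f(x)) = -f'(x)^\theta \sum_{j=1}^\infty \frac{v(E_r(f^j(x)))}{\dfx{j+1}},
\end{equation*}
so the difference $\alpha(f(x)) - f'(x)^\theta \alpha(x)$ collapses to the $i=0$ term of \eqref{eq:cohomSolFormula0} multiplied by $-f'(x)^\theta/f'(x)^\theta = -1$, giving exactly $v(E_r(x))$.

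For uniqueness, suppose $\alpha_1, \alpha_2$ are two bounded solutions and set $\beta = \alpha_1 - \alpha_2$. Subtracting the two equations, $\beta$ solves the homogeneous equation $\beta(f(x)) = f'(x)^\theta \beta(x)$. Iterating this relation along the forward orbit yields $\beta(f^n(x)) = \dfxth{n} \beta(x)$ for every $n \geq 0$. Since $\beta$ is bounded by some $M$ and $\dfx{n} \geq \lambda^n$, we obtain $|\beta(x)| \leq M \lambda^{-n\theta}$, and letting $n\to\infty$ forces $\beta \equiv 0$.

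The argument is essentially routine once the geometric decay from the expansion is in hand; I do not expect any serious obstacle. The only point requiring slight care is keeping track of the chain-rule bookkeeping when verifying the functional equation, since the shift of index interacts with the $\theta$-power of the derivative factor; this is where a small mistake would be easy to make, but the telescoping is clean.
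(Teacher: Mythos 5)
Your proposal is correct and follows essentially the same route as the paper: convergence of the series by the geometric bound $\dfx{i+1}\geq\lambda^{i+1}$, direct verification of the functional equation by reindexing and the chain rule, and uniqueness by iterating forward $N$ steps and letting expansion kill any bounded difference (the paper phrases this by subtracting two finite expansions, while you note that the difference solves the homogeneous equation $\beta\circ f = (f')^\theta\beta$ — same argument, yours stated a bit more cleanly). One small slip in the prose of your existence verification: the difference $\alpha(f(x)) - f'(x)^\theta\alpha(x)$ equals the $i=0$ term of $\alpha(x)$ multiplied by $-f'(x)^\theta$ (not by $-f'(x)^\theta/f'(x)^\theta = -1$ as you wrote), which is what produces $v(E_r(x))$; your final answer is right but the stated bookkeeping factor is off.
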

\begin{rem}
    Note that in the case $r=1$
    the formula above gives exactly
    the function \eqref{eq:alpha_formula_intro}
    mentioned in Introduction.
\end{rem}

\begin{deff}
    For $\omega:I\to \Rb$ its
    (optimal) \holder exponent at point $x_0$ is 
    the following quantity
    \begin{equation*}
        h_x(\omega) = \varliminf_{\eps\to 0} 
        \setdef{ \frac{\log\vmod{\omega(x)-\omega(y)}}
        {\log\vmod{x-y}}}
        {y\in B_\eps(x)}.
    \end{equation*}
\end{deff}

\begin{deff}
    For $\omega:I\to \Rb$ its
    (optimal) global \holder exponent is 
    just $h(\omega) = \inf_{x\in I} h_x(\omega) $. 
%    the following quantity
%    \begin{equation*}
%        h(\omega) = \inf_{x\in I} h_x(\omega)
%        .
%    \end{equation*}
\end{deff}

\begin{theorem}
    \label{thm:tdf_main}
    Let $\alpha$ be the only bounded solution to
    equation \eqref{eq:thetaTwistCohom} for $r=1$. 
    There are only two possibilities:
    \begin{enumerate}
        \item
            $\alpha$ is a $C^1$-smooth function
        \item  \label{enum:locholexp_val}
            $h_x(\alpha) = \theta$ for Lebesgue almost-every
            $x$.
    \end{enumerate}
%    Moreover, the second possibility is generic.
\end{theorem}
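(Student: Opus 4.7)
The plan is to establish the dichotomy in three stages: a global lower bound $h_x(\alpha)\geq\theta$ for every $x$, the $f$-invariance of $x\mapsto h_x(\alpha)$ (reducing it to an a.e. constant), and finally an upper bound argument which either gives $h_x(\alpha)=\theta$ Lebesgue-a.e. or else forces $\alpha\in C^1$.

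For the lower bound, I would iterate \eqref{eq:thetaTwistCohom} with $r=1$ into the telescoped form
\begin{equation*}
  \alpha(x) = \frac{\alpha(f^n(x))}{((f^n)'(x))^\theta} - \sum_{i=0}^{n-1} \frac{v(f^i(x))}{((f^{i+1})'(x))^\theta}.
\end{equation*}
For $x,y$ close, pick $n$ so that $x,y$ lie in a common $n$-cylinder on which $f^n$ is a diffeomorphism and $|f^n(x)-f^n(y)|$ is of order one; then $|x-y|\asymp ((f^n)'(x))^{-1}$ by bounded distortion. The boundary contribution to $\alpha(x)-\alpha(y)$ is $O(|x-y|^\theta)$ since $((f^n)'(\cdot))^{-\theta}\asymp|x-y|^\theta$, and the finite sum is controlled by the geometric series $\sum_i |v(f^i(x))-v(f^i(y))|\cdot((f^{i+1})'(x))^{-\theta}$ with $|f^i(x)-f^i(y)|\leq C\lambda^{-(n-i)}$, which also sums to $O(\lambda^{-n\theta})$. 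Hence $\alpha$ is globally $\theta$-\holder\ and $h_x(\alpha)\geq\theta$ for every $x$.

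Next, the cohomological equation $\alpha(f(x))=v(x)+(f'(x))^\theta\alpha(x)$ together with $v\in C^{1+\eps}$ and $f\in C^2$ with $f'\geq\lambda>1$ shows that $h_{f(x)}(\alpha)=h_x(\alpha)$, so $h_x(\alpha)$ is $f$-invariant. By ergodicity of the $C^2$-expanding map $f$ with respect to Lebesgue measure, there is a constant $h^*\geq\theta$ with $h_x(\alpha)=h^*$ Lebesgue-a.e. Assume now $\alpha\notin C^1$: the plan is to produce, for a.e. $x$ and infinitely many $n$, a pair of points $y_1,y_2$ inside the $n$-cylinder of $x$ with images $p_j=f^n(y_j)$ realizing nontrivial variation of $\alpha$, via density of inverse branches. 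Applying the telescoped identity, the principal contribution to $\alpha(y_1)-\alpha(y_2)$ is $((f^n)'(\cdot))^{-\theta}(\alpha(p_1)-\alpha(p_2))\asymp\lambda^{-n\theta}(\alpha(p_1)-\alpha(p_2))$, while the remaining corrections (coming from the difference of Birkhoff-like sums and distortion) are of order $\lambda^{-n\theta}\cdot|p_1-p_2|$. This gives $|\alpha(y_1)-\alpha(y_2)|\gtrsim|y_1-y_2|^\theta$, forcing $h^*\leq\theta$.

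The main obstacle is this last step: the principal term and the error terms both scale like $|x-y|^\theta$, so the comparison is not automatic and must be extracted by choosing $p_1,p_2$ with $|\alpha(p_1)-\alpha(p_2)|$ exceeding a definite Lipschitz-type multiple of $|p_1-p_2|$. If no such pair exists — i.e., if $\alpha$ is globally Lipschitz — one is forced into a bootstrap: the derivative $\alpha'$ (if it exists) satisfies a further twisted cohomological equation whose twist $(f')^{\theta-1}<1$ is now a contraction on the level of derivatives, and standard arguments make $\alpha'$ continuous, yielding $\alpha\in C^1$. Handling this bootstrap cleanly — using only the $C^{1+\eps}$-regularity of $v$ and $C^2$-regularity of $f$ — and verifying that the two alternatives genuinely exhaust the possibilities is the technical heart of the proof.
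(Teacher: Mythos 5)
Your lower-bound argument (telescoping and bounded distortion give $h_x(\alpha)\geq\theta$ everywhere) is in the same spirit as the paper's direct approach in Section~\ref{sec:direct_approach}, and the observation that $h_x(\alpha)$ is $f$-invariant is a nice reduction the paper does not use. The upper-bound half, however, has a genuine gap, and it sits exactly where you yourself flag the ``technical heart.''

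The problem is the dichotomy. You propose: either one can find, in each deep cylinder, a pair $p_1,p_2$ with $|\alpha(p_1)-\alpha(p_2)|$ a definite multiple of $|p_1-p_2|$, or else $\alpha$ is globally Lipschitz and a bootstrap makes it $C^1$. Neither horn is sound as stated. First, the negation of ``such a pair exists'' is not ``$\alpha$ is Lipschitz'': $\alpha$ could have oscillations that are $o(|p_1-p_2|)$ along the relevant pairs while still failing to be Lipschitz, so the two alternatives do not exhaust the possibilities. Second, the bootstrap does not work the way you suggest. Differentiating the defining series \eqref{eq:cohomSolFormula0} term by term produces summands of size roughly $\lambda^{i(1-\theta)-\theta}$, which diverge for $\theta<1$; equivalently, the cohomological equation that a putative derivative $\alpha'$ would satisfy has twist $(f')^{\theta-1}<1$, and solving it requires summing over all $n_0^n$ inverse branches with weights $\lambda^{-n(1-\theta)}$, which converges only under an extra condition on $n_0$ and $\lambda$, not in general. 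So ``standard arguments make $\alpha'$ continuous'' is not available; smoothness of $\alpha$ is a genuinely exceptional phenomenon requiring a cancellation mechanism, not a soft bootstrap.

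What the paper does instead is import that mechanism from Bedford's Theorem~\ref{thm:Bedford_dim}: the graph of $\alpha$ is the repellor $E$ of the skew-product $G$ in \eqref{eq:Gform}, and Bedford's dichotomy says that either the strong stable manifolds of the fixed points of the inverse branches $\phi_i$ all coincide---in which case $E$ equals one such $W^{ss}$ and is a $C^1$ curve, i.e., $\alpha\in C^1$---or they do not, in which case Proposition~8 of \cite{BEDFORD_BOX} (Lemma~\ref{lm:height}) pins the vertical extent of $E_n(x)$ to $\exp(-S_nf_H)$ up to a uniform constant, and Birkhoff's ergodic theorem (Lemma~\ref{lm:Birkhoff_conseq}) then gives $\log|E_m|_H/\log|E_m|_W\to\theta$ for $\mu_\Sigma$-a.e.\ $x$, which is exactly the upper bound $h_x(\alpha)\leq\theta$ a.e.\ (Proposition~\ref{prop:exp_ubound}). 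Your sketch has no substitute for the coincidence-of-stable-manifolds criterion, and that is precisely what certifies the first alternative of the theorem.
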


\begin{rem}
    In fact in possibility \ref{enum:locholexp_val}
    the inequality $h_x(\alpha) \geq \theta$
    holds everywhere. See Theorem \ref{thm:uniquesol}.
\end{rem}

\begin{rem}  \label{rem:Hardy_tdf}
    Note that the formula \eqref{eq:cohomSolFormula0} 
 for the solution to equation  
    \eqref{eq:thetaTwistCohom} 
    resembles the formula for the classical
    Weierstrass function \eqref{eq:class_weier}.
%    \begin{equation*}
%        W(x) = \sum_{n=0}^{\infty} a^n \cos(2\pi b^n x)
%    \end{equation*}
%    where $0<1/b<a<1$.

    Put $\theta = - \log a / \log b$.
    The classical result of 
    of Hardy states that
    the Weierstrass function
    is $\theta$-\holder at every $x\in \Rb$ 
    but for $\gamma>0$ is not $(\theta+\gamma)$-\holder 
    at any $x\in\Rb$.
    If one puts
    $    f(x) = E_b(x)$ and
        $v(x) = \cos (2\pi x)$, 
    then Theorem \ref{thm:tdf_main}
    implies an ``almost-every'' version
    of Hardy's result.
\end{rem}

We will split the proof of the Theorem \ref{thm:tdf_main}
into two parts.

First in the Section \ref{ssec:exp_ubound}
we prove a version of Theorem \ref{thm:tdf_main}
where we have $\leq$ sign instead of $=$ in the
possibility \ref{enum:locholexp_val}.

Next in the Section \ref{sec:direct_approach}
we prove that $h_x(\alpha) \geq \theta$ 
always and for every $x$, thus finishing the proof of 
\ref{thm:tdf_main}. 
In that section we also present a different proof of a weaker version 
of Theorem \ref{thm:tdf_main} without referring to 
symbolic dynamics.

\section{Study of \holder exponents using symbolic dynamics}

In this section we will prove most of the 
Theorem \ref{thm:tdf_main}
following ideas from \cite{BEDFORD_HOL}.
First we introduce some notations and discuss a related result
from \cite{BEDFORD_BOX}.

\subsection{Box-counting dimension of the graph of $\alpha$}

Denote $I=[0,1]$.
We identify $S^1$ and $I/\sim$ where $\sim$ identifies $0$
and $1$.
Slightly abusing notation we identify $f$ and 
its lift with respect to this factorization by $\sim$.

Let $Y = I \times \Rb$ and 
a $C^1$ %differentiable 
map $G: Y \to Y $
of the form $G(x,y) = (f(x), \bar{f}(x,y) )$.
Suppose $\bar{f}(x,\cdot)$ is an 
expanding diffeomorphism of $\Rb$ and
\begin{equation} \label{eq:expansion_condition}
    \vmod{ \diff{\bar{f}(x,y)  }{y} } < \vmod{f'(x)} .
\end{equation}

Denote by $\nbr$ the topological degree of $f$.
Denote by $\seq{l_i}_{i=0}^{\nbr-1}$ the sequence 
of points dividing 
%$S^1 = I/\sim$ 
%$I=[0,1]$
into
%segments of bijectivity of $f$ such that $l_0 = l_k = 0 =1$.
segments of bijectivity of $f$ such that $l_0 = 0$ and 
$l_{\nbr-1} = 1$.
As $f$ is $\nbr$-to-$1$, in particular we have that
$\img f|_{[l_i,l_{i+1}]} = I$.
Put 
    $\phi_i =  (  G|_{[l_i,l_{i+1}]} )^{-1}$
    for $0\leq i < \nbr $ and
assume
that $\phi_i$ is $C^{1+\eps}$ for every $i$.

Obviously
$\phi_i:Y\to Y$ %is $C^{2}$ 
and injective.
There exist functions $f_i^{-1}:I \to I_i$ (inverse branches)
and $\bar{\psi}:Y\to \Rb$ such that
$\phi_i$ can be written in the following form:
 $   \phi_i = (\invbr_i(x), \bar{\psi}_i(x,y) ) $,
where
\begin{gather*}
    \vmod{ (\invbr_i)' }\in (0,1), \quad
    \vmod{ \diff{ \bar{\psi}_i(x,y)} {y} } \in (0,1)
    .
\end{gather*}
%for $\alpha>0$

Let $a_i,b_i,c_i:Y\to \Rb$ be such that
for $z\in Y$ we have
\begin{gather*}
    D\phi_i(z) = \begin{pmatrix}
      a_i(z) & 0 \\
      b_i(z) & c_i(z)
    \end{pmatrix}
    .
\end{gather*}
Assumption
\eqref{eq:expansion_condition}
implies that
$c_i(z) > a_i(z)$ for every $z$.

%Note that $z_i$ are such that 
%\begin{equation*}
%    \phi_i(z_{\nbr-1} ) = \phi_{i+1} (z_0), \quad 0\leq i< 
%    \nbr-1
%    .
%\end{equation*}

Consider the global repeller for $G$: 
\begin{equation*}
    E = \setdef{(x,y)}
    { \seq{G^n(x,y)}_{n=0}^{\infty}\text{ is bounded}} 
    .
\end{equation*}

Let $\Sigma= \seq{0,1,\ldots,\nbr-1}^{\Zplus}$ be a full one-sided shift on $\nbr$ symbols.
It is well known that one can code every point of the $I$ 
by a sequence from $\Sigma$.
Define $\code:\Sigma\to I$ and $\codeE:\Sigma\to E$ by 
\begin{gather*}
    \codeE(x) = \bigcap_{n\geq 0} \phi_{x_0} \circ \ldots \circ \phi_{x_n} (E), \\
    \code(x) = \bigcap_{n \geq 0} \invbr_{x_0} \circ \ldots \circ \invbr_{x_n} (I)
    .
\end{gather*}
As both $\phi_i$ and $\invbr_i$ are strict contractions, intersections in the definitions
of $\codeE$ and $\code$ consist of single points.

Let $\sigma$ be a left shift on $\Sigma$.
It is folklore that there exist an ergodic invariant measure 
$\mu$ on $I$ that is absolutely continuous with 
respect to the Lebesgue measure on $I$.
Denote its push-back under the action of $\code$
by $\mu_\Sigma$. 
It is known that $\code$ and $\codeE$ are 
$\mu_\Sigma$-a.e. one-to-one and conjugate the 
dynamics of $f$ on $I$ and $G$ on $E$ with the dynamics of $\sigma$ on $\Sigma$ and
$\mu_\Sigma$ is a $\sigma$-invariant measure on $\Sigma$.

The repellor $E$ admits the following characterization:
\begin{lm}[\cite{BEDFORD_BOX}]
    \label{lm:E_is_graph}
    $E$ is a graph of continuous function $\alpha:I\to \Rb$.
\end{lm}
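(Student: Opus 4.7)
The plan is to deduce the graph structure of $E$ from the contracting properties of the inverse branches $\phi_i$ together with the symbolic coding $\codeE$ introduced above. Three things must be verified: (i) for every $x\in I$ at most one $y\in\Rb$ satisfies $(x,y)\in E$; (ii) for every $x\in I$ at least one such $y$ exists; (iii) the resulting function $\alpha$ is continuous.

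For (i) I would use expansion in the vertical direction: since $\vmod{c_i(z)}<1$ uniformly, $G$ expands $y$-distances by some uniform factor $\mu>1$. If both $(x,y_1)$ and $(x,y_2)$ lie in $E$ then their $G$-orbits share the first coordinate $f^n(x)$ for every $n$, while the second coordinates satisfy
\[
    \vmod{\pi_y G^n(x,y_1)-\pi_y G^n(x,y_2)}\geq \mu^n\vmod{y_1-y_2},
\]
so boundedness of both orbits forces $y_1=y_2$.

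For (ii) I would invoke the coding. Since $a_i,c_i\in(0,1)$ uniformly and the off-diagonal term $b_i$ is bounded, each $\phi_i$ is a contraction of the complete space $Y$ in a suitable adapted metric, so for any $\omega\in\Sigma$ the compositions $\phi_{\omega_0}\circ\cdots\circ\phi_{\omega_n}$ carry any bounded set to one of diameter shrinking geometrically to $0$. Hence $\codeE(\omega)$ is a well-defined single point and $\omega\mapsto\codeE(\omega)$ is continuous from $\Sigma$ into $Y$. The inclusion $\phi_i(E)\subseteq E$ holds because $G\circ\phi_i=\mathrm{id}$, so the forward orbit of $\phi_i(z)$ is just the orbit of $z$ with $\phi_i(z)$ prepended, which remains bounded when $z\in E$. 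Together with nonemptiness of $E$ (any $\phi_i$ has a fixed point in $Y$ that is a fixed point of $G$, hence in $E$), this gives $\codeE(\omega)\in E$. The identity $\pi_x\circ\phi_i=\invbr_i\circ\pi_x$ yields $\pi_x\circ\codeE=\code$, so surjectivity of $\code:\Sigma\to I$ transfers to surjectivity of $\pi_x:E\to I$; conversely every $(x,y)\in E$ is $\codeE(\omega)$ for the $\omega$ whose $n$-th entry records the branch of $f$ containing $\pi_x G^n(x,y)$.

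For (iii) define $\bar\alpha=\pi_y\circ\codeE:\Sigma\to\Rb$, which is continuous. Uniqueness from (i) says $\bar\alpha$ is constant on fibers of $\code$, so there is a well-defined $\alpha:I\to\Rb$ with $\alpha\circ\code=\bar\alpha$. Since $\Sigma$ is compact and $I$ is Hausdorff, $\code$ is a closed quotient map, so $\alpha$ is continuous; by construction $E$ is its graph. The main technical obstacle is the bookkeeping in step (ii), where one has to simultaneously establish nonemptiness of $E$, the invariance $\phi_i(E)\subseteq E$, and surjectivity of $\codeE$ onto $E$, despite the fact that the definition of $\codeE$ already refers to $E$.
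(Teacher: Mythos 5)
The paper does not reprove this lemma---it is quoted from \cite{BEDFORD_BOX}---so there is no in-text argument to compare with, but your scheme (injectivity of the projection to $I$ via vertical expansion, existence via $\codeE$, continuity via the quotient map $\code$) is the standard one. There is, however, a genuine error in step (ii): you assert that the off-diagonal entry $b_i$ is bounded and that each $\phi_i$ is therefore a contraction of $Y$ in an adapted metric. For the $G$ of this paper that is false: from $\phi_i(x,y)=\bigl(f_i^{-1}(x),\,(y-v(f_i^{-1}(x)))(f'(f_i^{-1}(x)))^{-\theta}\bigr)$ one sees that $b_i(x,y)$ is affine in $y$ with a nonconstant coefficient as soon as $f$ is nonlinear, hence unbounded on $Y=I\times\Rb$. (The paper itself singles out the $y$-dependence of $b_i$ as the obstruction to applying \cite{BEDFORD_HOL} directly.) Consequently $\phi_i$ is not a global contraction of $Y$, and your claim that $\phi_{\omega_0}\circ\cdots\circ\phi_{\omega_n}$ shrinks $E$ to a point is not justified until you know $E$ lies inside a fixed bounded region.

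That boundedness must be established first, and it follows from the same vertical expansion you already invoke in (i). Here $\bar f(x,y)=y(f'(x))^\theta+v(x)$, so $\vmod{\bar f(x,y)}\geq\lambda^\theta\vmod{y}-\maxv$; if $\vmod{y}>R:=\maxv/(\lambda^\theta-1)$ the distance of the second coordinate from $R$ grows by a factor $\lambda^\theta$ at each step, so the orbit escapes and $(x,y)\notin E$. Thus $E\subseteq I\times[-R,R]$, and in fact $E=\bigcap_{n\geq 0}G^{-n}(I\times[-R,R])$, which is compact. Choosing a closed convex strip $K=I\times[-R',R']$ with $R'\geq R$ one has $\phi_i(K)\subseteq K$ and $b_i$ bounded on $K$, so your adapted-metric contraction argument is valid on $K$ (and a fortiori on $E$), giving well-definedness and continuity of $\codeE$. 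With these preliminaries supplied, the remainder of your proposal---invariance $\phi_i(E)\subseteq E$, nonemptiness via fixed points, surjectivity onto $I$ because the first coordinate of $\codeE(\omega)$ is $\code(\omega)$, and the closed-quotient argument for continuity---is correct.
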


For $n\geq 0$ and $x'_0,\ldots, x'_n\in \seq{0,\ldots,\nbr-1}$
denote the corresponding $n+1$ cylinder by
\begin{equation*}
    C_{x'_0\ldots x'_n} = \setdef{x\in\Sigma}{x_i = x'_i,\ 0\leq i\leq n}.
\end{equation*}

For $\beta:\Sigma \to \Rb$ denote
\begin{equation*}
    S_n \beta (x)  = \sum_{i=0}^{n-1} \beta(\sigma^i(x) ).
\end{equation*}
For a continuous $\beta:\Sigma\to \Rb$
denote the  topological pressure of $\beta$
 by $P(\beta)$.  I.e.
\begin{equation*}
    P(\beta) = \lim_{n\to \infty} \frac{1}{n} \log
    \enbrace{ \sum_{C_n } \inf_{x\in C_n} \exp(S_n\beta(x))  },
\end{equation*}
where the summation is taken over all $n$-cylinders $C_n$ of $\Sigma$.

Define two functions $f_W,f_H:\Sigma \to \Rb$ by
\begin{gather*}
    f_W(x) = -\log a_{x_0} (\codeE \sigma(x) ), \quad
    f_H(x) = -\log c_{x_0} (\codeE \sigma(x) ).
\end{gather*}

Denote the fixed points of $\phi_0$ and $\phi_{\nbr}$
by $z_0 = (0,y_0)$ and $z_{\nbr-1} = (1,y_{\nbr-1}) $. 
Denote the (global) strong stable manifold
of $\phi_i$ at point $z_i$
by
\begin{equation*}
    W^{ss}_{\phi_i}(z_i) = Y \cap \bigcup_{n\geq 0} 
    \phi_i^{-n} (W^{ss}_{\phi_i,loc}(z_i) )  ,
\end{equation*}
where 
\begin{equation*}
    W^{ss}_{\phi_i,loc}(z_i) = \setdef{z\in Y}
    { %\phi_i^n(z)\to z_i,\ 
        \seq{\log\vmod{\phi_i^n(z)-z_i} - n\log a_i(z_i)}_{n=0}^\infty
        \textrm{ is bounded}
    }
    .
\end{equation*}
%Note that at point $z_i \in W^{ss}_{\phi_i}(z_i)$
%the manifold is tangent to the horizontal direction
%due to assumption \eqref{eq:expansion_condition}.
%Also 
Is not difficult to see that 
$W^{ss}_{\phi_i}(z_i)$ is a graph of 
some function from $I$ to $\Rb$ having 
a continuous 
derivative (see \cite{BEDFORD_BOX}, p. 58 for
this remark).

% по хорошему нужно определять локальное многообразие,
% касающееся горизонта и 
% потом глобальное как его образы
%Let $\phi:Y\to Y$ and $z\in Y$ be a fixed point
%of $\phi$. Denote
%the stable manifold of point $z$ under the action of $\phi$ by
%\begin{equation*}
%    W^{ss}_\phi(z) = \setdef{y\in Y}{ 
%        \dist(z, G^n(y) )
%    \to 0,\ n\to\infty}
%    .
%\end{equation*}

For completeness we give a definition of the box-counting 
dimension although we will not use its precise form:
\begin{deff}
    For $\gamma>0$ 
    and $A\subset Y$
    let $M(\gamma,A)$
    be the minimum number of boxes of 
    side length $\gamma$ that are required to cover
    $A$.
    The box dimension (or capacity) of the set
    $A$ is the following quantity
    \begin{equation*}
        \dim_B(A) = \varlimsup_{\gamma\to 0} 
        \frac{\log M(\gamma,A) }{-\log \gamma}
        .
    \end{equation*}
\end{deff}

The following theorem is valid:
\begin{theorem}[\cite{BEDFORD_BOX}] \label{thm:Bedford_dim}
    There are only two possibilities:
    \begin{enumerate}
        \item          \label{enum:wss_coincide}
            stable manifolds of all $z_i$
            coincide, i.e.
            \begin{equation*}
                W^{ss}_{\phi_0}(z_{0}) =  \cdots =
                W^{ss}_{\phi_{\nbr-1} }(z_{\nbr-1})  .
            \end{equation*}
            Then $E = W^{ss}_{\phi_0}(z_0)$. In particular $E$ is a $C^1$-smooth manifold.
% In particular, $\alpha$ is $C^1$. -- not really, 
% potentially we can have a weird parametrisation -- really?
            % Bedford discusses it in the beg of section and says that it is equiv
        \item
            otherwise the box-counting dimension of $E$ is equal to 
            $t+1$, where $t\in \Rb$ is
            such that $P(-tf_W-f_H)=0$.
    \end{enumerate}
\end{theorem}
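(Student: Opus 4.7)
The plan is to split along the stated dichotomy: handle the rigid case by a direct invariance argument, and reduce the box-counting computation in the generic case to a Bowen-type pressure formula via thermodynamical formalism.

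For the rigid case, suppose $W := W^{ss}_{\phi_0}(z_0) = \cdots = W^{ss}_{\phi_{\nbr-1}}(z_{\nbr-1})$. Each $W^{ss}_{\phi_i}(z_i)$ is $\phi_i$-invariant by construction, so $W$ is invariant under every inverse branch of $G$. Since $\codeE(x) = \bigcap_n \phi_{x_0}\circ\cdots\circ\phi_{x_n}(E)$ and the $\phi_{x_i}$ are strict contractions, the intersection is independent of the compact starting set; using $W$ instead of $E$ shows $\codeE(\Sigma)\subseteq W$, and hence $E\subseteq W$. Conversely $W$ is a graph of a $C^1$ function over $I$, and $E$ is a graph of a continuous function over $I$ by Lemma \ref{lm:E_is_graph}; two graphs over the same base with one contained in the other must coincide, so $E=W$ and is a $C^1$-smooth manifold.

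For the generic case I would follow Bedford's thermodynamical-formalism strategy. The key geometric fact, furnished by bounded distortion of the $C^{1+\eps}$ inverse branches, is that the image $\phi_{x_0}\circ\cdots\circ\phi_{x_n}(E)$ is a piece of $E$ whose horizontal extent is comparable to $\exp(-S_{n+1}f_W(\xi))$ and whose vertical extent to $\exp(-S_{n+1}f_H(\xi))$, uniformly in $\xi\in C_{x_0\ldots x_n}$. Because $c_i(z)>a_i(z)$, each such piece is a thin strip elongated vertically. For the upper bound on $\dim_B E$, at scale $\gamma$ I would form a stopping-time cover $\Ac_\gamma$ consisting of those cylinders whose horizontal image is just about to drop below $\gamma$; each strip is then covered by $\obig\enbrace{\exp(-S_{n+1}f_H)/\gamma}$ boxes of side $\gamma$. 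Writing $\gamma \sim \exp(-S_{n+1}f_W)$ on each stopping cylinder turns the total into $\sum_{C_n\in\Ac_\gamma}\exp(S_{n+1}(-tf_W-f_H))\cdot\gamma^{-(1+t)}$; the choice $P(-tf_W-f_H)=0$ keeps the sum bounded and yields $M(\gamma,E)\lesssim \gamma^{-(1+t)}$.

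For the matching lower bound I would combine $\dim_B E\geq 1$ (since $E$ is a graph) with the non-coincidence hypothesis to produce, at a definite scale, transverse separation between strips over the same horizontal base corresponding to distinct cylinders. Propagating this separation along the branches via bounded distortion, distinct cylinders of $\Ac_\gamma$ contribute essentially disjoint box families, and the same reorganised partition function gives the matching lower bound. The main obstacle, as is standard in such mixed-contraction settings, is this lower bound: one must leverage the non-degeneracy $W^{ss}_{\phi_i}(z_i)\neq W^{ss}_{\phi_j}(z_j)$ for some pair to rule out accidental vertical overlaps between nearby strips, whereas the upper bound and the identification of the critical exponent $t$ via pressure are comparatively mechanical once the cylinder geometry and bounded distortion are in place.
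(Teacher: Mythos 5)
This theorem is quoted from \cite{BEDFORD_BOX}; the paper you were given does not prove it, so there is no ``paper's own proof'' to compare against. Your sketch, however, does track the strategy of Bedford's original argument, so let me comment on its soundness.

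Your rigid-case argument is correct and essentially the standard one: each $W^{ss}_{\phi_i}(z_i)$ is $\phi_i$-invariant, the intersection defining $\codeE$ is insensitive to the compact seed set because the $\phi_i$ are uniform contractions, and two graphs over all of $I$ with one contained in the other must coincide. Your upper bound in the generic case is also the right mechanism --- a stopping-time family of cylinders selected by horizontal scale, each strip covered by $\approx \exp(-S_{n+1}f_H)/\gamma$ boxes, with $P(-tf_W-f_H)=0$ keeping the reorganised partition sum bounded. Where you are somewhat off is in the framing of the lower bound. The danger is not ``accidental vertical overlaps between nearby strips'' --- distinct cylinders project to disjoint horizontal intervals, so their box families are automatically disjoint. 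The real danger is that an individual strip $\phi_{x_0}\circ\cdots\circ\phi_{x_n}(E)$ could be much flatter than $\exp(-S_{n+1}f_H)$, in which case it needs far fewer boxes than your count assumes. Ruling this out is exactly the content of Bedford's Proposition 8 (restated in the paper as Lemma \ref{lm:height}): under the non-coincidence hypothesis, $\vmod{E_n(x)}_H$ is \emph{bounded below} as well as above by a constant multiple of $\exp(-S_n f_H(x))$. That two-sided height estimate, not a separation-of-strips argument, is the heart of the lower bound; without it your sketch does not close.
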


%\begin{rem}
%    In fact, Bedford proves this theorem for a more general case
%    where $f$ is just a $C^2$ piecewise-expanding map of the interval
%    with surjective branches.
%\end{rem}

%\begin{rem}
%Obviously, the second case is generic.
%\end{rem}

Recall that $v$ is a function from $I$ to $\Rb$
that is $C^{1+\eps}$ and choose a very special
$G$:
\begin{gather}
    \label{eq:Gform}
    %G(x,y) = \enbrace{ f(x), (y-w(f(x)) ) (f'(f(x) ) )^{\theta} }
    G(x,y) = \enbrace{ f(x), y (f'(x))^{\theta} +v(x)   }
    .
\end{gather}
%and $f_i^{-1} = \psi_i$.

Then the following lemma shows how the repeller
of $G$ is related ot the Weierstrass-like function $\alpha$
mentioned above:
\begin{lm}
    %Put $v(x) = w(x) $.
    %
    The repellor $E$ coincides with the graph of 
    the only bounded solutions to the
    $\theta$-twisted cohomological equation
    (Equation \eqref{eq:grapheq}).
%    Then $E$ coincides with the graph of 
%    the function $\alpha$ from 
%    formula \eqref{eq:alpha_formula_intro}
\end{lm}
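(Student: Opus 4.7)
The plan is to use Lemma \ref{lm:E_is_graph} to identify $E$ with the graph of some continuous function $\tilde{\alpha}: I \to \Rb$, then to deduce from the forward invariance of $E$ under $G$ that $\tilde{\alpha}$ satisfies the $\theta$-twisted cohomological equation \eqref{eq:thetaTwistCohom} with $r=1$, and finally to invoke Lemma \ref{lm:exist_unique} to conclude $\tilde{\alpha} = \alpha$. Observe that Lemma \ref{lm:E_is_graph} applies in both alternatives of Theorem \ref{thm:Bedford_dim}, so the representation of $E$ as a graph is unconditional.

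First, Lemma \ref{lm:E_is_graph} yields a continuous function $\tilde{\alpha}:I\to \Rb$ with $E = \{(x, \tilde{\alpha}(x)) : x\in I\}$. Since $I$ is compact and $\tilde{\alpha}$ is continuous, $\tilde{\alpha}$ is bounded, so it is at least a candidate bounded solution to \eqref{eq:thetaTwistCohom}.

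Next, by its very definition as the set of points with bounded forward $G$-orbit, $E$ is forward $G$-invariant: if $(x,y)\in E$, then the orbit of $G(x,y)$ is a tail of a bounded sequence and hence bounded, so $G(x,y)\in E$. Applying this to a point on the graph and using the explicit form \eqref{eq:Gform}, we obtain
\begin{equation*}
G(x, \tilde{\alpha}(x)) = \enbrace{f(x),\ \tilde{\alpha}(x)(f'(x))^{\theta} + v(x)} \in E.
\end{equation*}
Since $E$ is a graph, the second coordinate above must equal $\tilde{\alpha}(f(x))$; rearranging gives
\begin{equation*}
v(x) = \tilde{\alpha}(f(x)) - (f'(x))^{\theta} \tilde{\alpha}(x),
\end{equation*}
which is exactly equation \eqref{eq:thetaTwistCohom} with $r=1$ (so that $E_r(x)=x$).

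Finally, Lemma \ref{lm:exist_unique} asserts that this equation has a \emph{unique} bounded solution, so $\tilde{\alpha}$ must coincide with the function $\alpha$ given by the series \eqref{eq:cohomSolFormula0}, proving that $E$ is the graph of $\alpha$. There is essentially no obstacle: the substantive inputs (the graph structure and uniqueness) are borrowed from Lemma \ref{lm:E_is_graph} and Lemma \ref{lm:exist_unique}, and the only thing to verify is that the algebraic form of $G$ chosen in \eqref{eq:Gform} is precisely the one that makes graph-invariance coincide with the cohomological equation.
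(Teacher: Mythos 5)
Your proof is correct and rests on the same two ingredients as the paper's argument (Lemma \ref{lm:E_is_graph} and the uniqueness from Lemma \ref{lm:exist_unique}); the only difference is that you run the argument in the opposite direction, starting from $E$ as a graph, using forward invariance of $E$ to show its defining function $\tilde{\alpha}$ satisfies \eqref{eq:thetaTwistCohom}, and then invoking uniqueness, whereas the paper starts from the series solution $\alpha$, observes that its graph is a bounded $G$-invariant set hence contained in $E$, and concludes equality because $E$ is itself a graph. Both directions are equally valid and essentially the same in content.
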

\begin{proof}
    It is enough to solve for $\alpha(x)$ the following equation
    \begin{equation} \label{eq:grapheq}
        G(x,\alpha(x) ) = (f(x), \alpha(f(x)) ).
    \end{equation}
    Its solution exists
    if and only if $f$
    satisfies  
    equation \eqref{eq:thetaTwistCohom} for $r=1$.
    Therefore it has unique bounded continuous solution
    by Lemma \ref{lm:exist_unique}.

    As  $\alpha$ satisfies Equation \eqref{eq:grapheq}, 
    its graph is invariant.
    Its boundedness implies that it is a subset of $E$. 
    But $E$ is itself a graph of a 
    function due to Lemma \ref{lm:E_is_graph}, 
    in particular for any $x\in I$
    there is only one point from $E$ with $x$ 
    as a first coordinate. Therefore
    the graph of $\alpha$ is equal to $E$.
\end{proof}

%\begin{rem}
%    Later we will see that 
%    such functions correspond also to
%    the only bounded solutions to the
%    $\theta$-twisted cohomological equation
%    (Equation \eqref{eq:grapheq}).
%\end{rem}

Our choice of $G$ implies that for $(x,y)\in Y$
\begin{equation*}
    \phi_i(x,y) = (f_i^{-1}(x),  y (f'(f_i^{-1}(x)) )^{-\theta}  + v(f_i^{-1}(x))  ).
\end{equation*}
Therefore
\begin{gather*}
    a_i(x,y) = (f'(f_i^{-1}(x) ) )^{-1} ,\\
    b_i(x,y) =  \diff{ 
        \enbrace{
        (f'(f_i^{-1}(x)) )^{-\theta}
        \enbrace{y   - v(f_i^{-1}(x)) } } }{x}, \\
    c_i(x,y) =  (f'(f_i^{-1}(x) ) )^{-\theta}  
\end{gather*}
and
\begin{gather*}
    f_W(x) = \log f'
    ( f_{x_0}^{-1}(\code ( \sigma(x)) ) ), \quad
    f_H(x) = \theta\log f' 
    ( f_{x_0}^{-1}(\code ( \sigma(x)) ) ). 
\end{gather*}
It is very important that in our case 
(when $G$ is of form \eqref{eq:Gform}) $f_H$ is proportional 
to $f_W$.

Recall that the map $\code$ conjugates the dynamics of the shift and the dynamics of $f$ and is bijective almost everywhere.
Also our definition of $\code$ implies that 
$\code(x)\in \invbr_{x_0}(I)$.
    Thus for $\mu_\Sigma$-a.e. $x$  we have
  $     f_{x_0}^{-1}(\code ( \sigma(x)) ) = 
       f_{x_0}^{-1}( f(\code ( x) ) = 
       \code(x) $.
Consequently
\begin{gather*}
    f_W(x) = \log f' ( \code(x) ), \quad
    f_H(x) = \theta\log f' ( \code(x) ). 
\end{gather*}

\begin{rem} \label{rem:boxdim}
    It is well known that 
    %for expanding circle maps
    the topological pressure of 
    the potential $-\log f'$ is equal to $0$.
Properties of $\code$ imply that 
    we can view $-tf_W-f_H$ as $-(t+\theta)\log f'$.
    Therefore if $t+\theta = 1$ then 
    $t = 1-\theta$ and the box-counting dimension is equal
    to $2-\theta$ by Theorem \ref{thm:Bedford_dim}.
\end{rem}

The following standard lemma establish relation between box-counting dimension
of the graph of the function and the global \holder exponent of it.
Its proof can be found in \cite{BEDFORD_HOL}, for example.
\begin{lm} \label{lm:dim_and_exp}
    Suppose $\omega:I\to \Rb$ has the global 
    \holder exponent $\gamma$. 
    Then
     $   \gamma \leq 2- d $,
    where $d$ is the box-counting dimension of the graph of $\omega$. 
\end{lm}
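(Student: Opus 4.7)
The plan is to carry out the standard box-counting argument for Hölder graphs: use the Hölder estimate to bound the oscillation of $\omega$ on intervals of length $1/N$, cover the graph by boxes of side $1/N$ column by column, and count.

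First I would extract a uniform Hölder bound from the pointwise hypothesis. Fix $\gamma' < \gamma$. Since $h_x(\omega) \geq \gamma > \gamma'$ for every $x \in I$, for each $x$ there is $\delta_x > 0$ with $|\omega(x)-\omega(y)| \leq |x-y|^{\gamma'}$ whenever $|x-y| < \delta_x$. Using compactness of $I$, cover $I$ by finitely many balls $B_{\delta_{x_i}/2}(x_i)$, so that any two points at distance less than some $\delta>0$ lie in a common $B_{\delta_{x_i}}(x_i)$. For pairs with $|x-y| \geq \delta$ the estimate $|\omega(x)-\omega(y)| \leq 2 \sup |\omega| \leq C'|x-y|^{\gamma'}$ is automatic since $\omega$ is bounded. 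This yields $|\omega(x)-\omega(y)| \leq C |x-y|^{\gamma'}$ for all $x,y \in I$, with $C$ depending only on $\gamma'$.

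Next I would estimate $M(1/N,\mathrm{graph}(\omega))$. Partition $I$ into $N$ intervals $I_j$ of length $1/N$. By the uniform bound of the previous step, $\sup_{I_j}\omega - \inf_{I_j}\omega \leq C N^{-\gamma'}$. Therefore the portion of $\mathrm{graph}(\omega)$ over $I_j$ lies inside a vertical rectangle of width $1/N$ and height at most $C N^{-\gamma'}$, which can be covered by at most $\lceil C N^{-\gamma'}/N^{-1}\rceil + 1 \leq C'' N^{1-\gamma'}$ boxes of side $1/N$. Summing over $j = 1,\dots,N$ gives
\begin{equation*}
  M\bigl(1/N,\mathrm{graph}(\omega)\bigr) \leq C'' N^{2-\gamma'}.
\end{equation*}

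Then, from the definition of box dimension,
\begin{equation*}
  \dim_B(\mathrm{graph}(\omega)) = \varlimsup_{\gamma\to 0}\frac{\log M(\gamma,\mathrm{graph}(\omega))}{-\log \gamma} \leq 2-\gamma',
\end{equation*}
and letting $\gamma' \nearrow \gamma$ gives $d \leq 2-\gamma$, i.e.\ $\gamma \leq 2-d$. The only nontrivial step is the first one, passing from pointwise Hölder exponents to a uniform Hölder estimate on the compact interval $I$; the subsequent counting is routine.
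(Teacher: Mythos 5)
Your step 2 (column-by-column counting) and step 3 (taking limits) are fine once you have a \emph{uniform} bound $|\omega(x)-\omega(y)|\le C|x-y|^{\gamma'}$, and the paper itself does not give a proof of this lemma, only a reference to \cite{BEDFORD_HOL}, where the standard result is indeed proved by exactly this column-counting. The problem is step 1: your compactness argument does not upgrade the pointwise hypothesis to the uniform one. After covering $I$ by finitely many balls $B_{\delta_{x_i}/2}(x_i)$ and taking $x,y$ with $|x-y|<\delta$, you only know that $x,y$ lie in a common $B_{\delta_{x_i}}(x_i)$; the pointwise estimate at $x_i$ then gives, via the triangle inequality, only
\begin{equation*}
|\omega(x)-\omega(y)|\le |x-x_i|^{\gamma'}+|y-x_i|^{\gamma'}\le 2\delta_{x_i}^{\gamma'},
\end{equation*}
a fixed constant independent of $|x-y|$, not something of order $|x-y|^{\gamma'}$. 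The implication ``$h_x(\omega)\ge\gamma$ for all $x$ on a compact set $\Rightarrow$ uniformly $\gamma'$-H\"older for $\gamma'<\gamma$'' is in fact false. For instance, take disjoint bumps $\omega(x)=\sum_n c_n\,\phi\bigl((x-a_n)/r_n\bigr)$ with $\phi$ a fixed smooth bump, $a_n=1/n$, $r_n=c/n^2$, $c_n=n^{-\gamma'}$. Then $h_0(\omega)=\gamma'$ and $h_x(\omega)\ge 1$ for $x\ne 0$, so the global exponent in the paper's sense is $\gamma'$; yet comparing the peak $a_n$ with the edge $a_n-r_n$ gives $|\omega(a_n)-\omega(a_n-r_n)|/r_n^{\gamma''}\sim n^{2\gamma''-\gamma'}$, which is unbounded for any $\gamma''>\gamma'/2$. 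So no uniform $\gamma'$-H\"older estimate holds.

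Consequently, your proof actually establishes the lemma only under a \emph{uniform} $\gamma$-H\"older hypothesis, which is the form in which the result is usually stated and in which \cite{BEDFORD_HOL} proves it. As written in the paper, with $h(\omega)=\inf_x h_x(\omega)$, the lemma requires a more careful argument than the one you gave: the oscillation on a column $I_j$ is controlled by $(1/N)^{\gamma'}$ only when $I_j$ contains a point $z$ with $\delta_z\gtrsim 1/N$, and you would need to account for the remaining ``bad'' columns. (For the paper's actual application in Corollary \ref{cor:boxdim_conseq} this is harmless, since the other half of the main theorem shows $\alpha$ is in fact uniformly $\theta$-H\"older, but as a free-standing statement the lemma is not established by your argument.)
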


Summarizing, Theorem \ref{thm:Bedford_dim} 
via Remark \ref{rem:boxdim} and Lemma \ref{lm:dim_and_exp}
implies the following:
\begin{corollary}   \label{cor:boxdim_conseq}
    If the strong stable manifolds of $z_i$
    do not coincide,
    %For a generic pair $(f,v)$, 
    the global
    \holder exponent of function
    $\alpha$ is less or equal than $\theta$.
\end{corollary}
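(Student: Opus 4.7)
The plan is to assemble the three ingredients already in place: Theorem \ref{thm:Bedford_dim}, the pressure computation sketched in Remark \ref{rem:boxdim}, and Lemma \ref{lm:dim_and_exp}. Since the hypothesis rules out item \ref{enum:wss_coincide} of Theorem \ref{thm:Bedford_dim}, we are in the second alternative, which gives $\dim_B E = t+1$ with $t$ characterised by $P(-tf_W - f_H) = 0$. The whole task is to identify this $t$ explicitly for our specific choice \eqref{eq:Gform} of $G$.

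To do this I would use the identities already derived just before Remark \ref{rem:boxdim}: for $\mu_\Sigma$-a.e.\ $x\in\Sigma$ one has $f_W(x) = \log f'(\code(x))$ and $f_H(x) = \theta\log f'(\code(x))$, so $-tf_W - f_H$ agrees $\mu_\Sigma$-a.e.\ with $-(t+\theta)\log f'\circ\code$. Because $\code$ intertwines $\sigma$ and $f$ and pushes $\mu_\Sigma$ to the absolutely continuous invariant measure $\mu$ on $I$, the variational principle gives $P_\sigma(-tf_W - f_H) = P_f(-(t+\theta)\log f')$. Using the standard fact $P_f(-\log f') = 0$ (Rokhlin's formula combined with the absolutely continuous invariant measure for an expanding map), the unique $t$ solving $P_f(-(t+\theta)\log f') = 0$ is $t = 1-\theta$. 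Hence $\dim_B E = 2-\theta$.

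Finally, by the lemma relating $E$ to the graph of $\alpha$, we have $\dim_B(\operatorname{graph}\alpha) = 2-\theta$, and Lemma \ref{lm:dim_and_exp} immediately yields that the global Hölder exponent of $\alpha$ is at most $2-(2-\theta) = \theta$.

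The one place that requires a little care, and which I expect to be the main (albeit modest) obstacle, is justifying the equality $P_\sigma(-tf_W - f_H) = P_f(-(t+\theta)\log f')$: the functions $f_W, f_H$ live on $\Sigma$, the potential $\log f'$ lives on $I$, and the identification between them is only $\mu_\Sigma$-almost everywhere. One must either invoke that the topological pressure of a continuous potential depends only on its values on the support of equilibrium states and that $\code$ is a measure-theoretic isomorphism carrying the shift to $f$, or appeal directly to the variational principle to transfer the pressure along the (almost everywhere) semiconjugacy. Once this is granted, the rest of the argument is an unwinding of Theorem \ref{thm:Bedford_dim} and Lemma \ref{lm:dim_and_exp}.
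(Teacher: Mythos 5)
Your proposal follows exactly the same route the paper takes: apply the dichotomy of Theorem \ref{thm:Bedford_dim}, identify $t=1-\theta$ via $P(-\log f')=0$ as in Remark \ref{rem:boxdim} to get $\dim_B E = 2-\theta$, and conclude via Lemma \ref{lm:dim_and_exp}. Your extra care about transferring the pressure computation from $\Sigma$ to $I$ is a reasonable elaboration of a point the paper leaves implicit in Remark \ref{rem:boxdim}.
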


\begin{rem}
    Note that this is a global result, i.e. it
    leaves a possibility that there is only a few points where 
    the \holder exponent of $\alpha$ can not be improved.
\end{rem}

\subsection{Proof of the upper bound for the \holder exponent}
\label{ssec:exp_ubound} 

Now we have to finish 
preparations for the proof of Theorem \ref{thm:tdf_main}
that implies an ''almost every'' version of
Corollary \ref{cor:boxdim_conseq}.

Denote for $x\in \Sigma$
\begin{gather*}
    E_n(x) = \phi_{x_0} \circ \ldots \circ \phi_{x_n} (E) 
    ,\\
    I_{n}(x) = \invbr_{x_0} \circ 
    \ldots \circ \invbr_{x_n} (I) .
\end{gather*}

For a set $A\subset Y$
denote
\begin{gather*}
    \vmod{A}_W = \sup\setdef{ \vmod{x-x'} }
    { (x,y),(x',y') \in A},
    \\
    \vmod{A}_H = \sup\setdef{ \vmod{y-y'} }
    { (x,y),(x',y') \in A}.
\end{gather*}

\begin{lm}[Proposition 8 from \cite{BEDFORD_BOX}] 
    \label{lm:height}
    If the possibility \ref{enum:wss_coincide}
    from Theorem \ref{thm:Bedford_dim}
    is not realized then
    there exists $N>0$ such that for 
    almost every $x\in \Sigma$ and $n\geq 0$
    \begin{equation*}
        \vmod{E_n(x)}_H \in [N^{-1},N] \exp(-S_nf_H(x))
        .
    \end{equation*}
\end{lm}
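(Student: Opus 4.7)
The plan is to separately establish the upper and lower bounds implicit in the containment $\vmod{E_n(x)}_H \in [N^{-1},N]\exp(-S_n f_H(x))$. The upper bound will follow from bounded distortion plus the boundedness of $\alpha$, and should in fact hold for every $x$ and every $n$; the lower bound is the delicate part, and is where both the hypothesis of non-coincidence of stable manifolds and the ``almost every'' qualifier enter.

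For the upper bound, I would apply the chain rule to $\Phi_n := \phi_{x_0}\circ\cdots\circ\phi_{x_n}$. Each $D\phi_i$ is lower triangular with diagonal entries $a_i$ and $c_i$ and off-diagonal entry $b_i$, so $D\Phi_n$ is also lower triangular. A standard bounded distortion argument (using the $C^{1+\eps}$ regularity of the $\phi_i$) shows that the $yy$-entry of $D\Phi_n$ evaluated at any point of $E_n(x)$ is comparable to $\prod_{i=0}^{n-1} c_{x_i}(\codeE\sigma^{i+1}(x)) = \exp(-S_n f_H(x))$. Because $c_i>a_i$ uniformly, a telescoping estimate for the lower-triangular product shows that the $yx$-entry is bounded by a constant multiple of the $yy$-entry. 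Since $\alpha$ is bounded and $I=[0,1]$, both $\vmod{E}_H$ and $\vmod{E}_W$ are finite, and the mean value theorem applied along a curve in $E$ yields $\vmod{E_n(x)}_H \leq N\exp(-S_n f_H(x))$ with a uniform $N$.

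For the lower bound, the hypothesis that $W^{ss}_{\phi_0}(z_0),\ldots,W^{ss}_{\phi_{\nbr-1}}(z_{\nbr-1})$ do not all coincide is essential. Each of these stable manifolds is a $C^1$ curve contained in $E$, and if two of them differ then I can select two points $p,q\in E$ whose separation has a component that is not aligned with the strong stable direction. Under $\Phi_n$, the $y$-separation of the images takes the form $[D\Phi_n]_{yy}(\alpha(x_q)-\alpha(x_p)) + [D\Phi_n]_{yx}(x_q - x_p)$, and both contributions are of order $\exp(-S_n f_H(x))$. The non-coincidence hypothesis prevents systematic cancellation between them. Ergodicity of $\sigma$ with respect to $\mu_\Sigma$, together with a Birkhoff-type argument, then upgrades ``some orbit enjoys a definite separation'' to the uniform estimate $\vmod{E_n(x)}_H \geq N^{-1}\exp(-S_n f_H(x))$ holding $\mu_\Sigma$-almost surely.

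The main obstacle is the lower bound. The difficulty is not merely that a separation survives, but that it survives at precisely the scale $\exp(-S_n f_H(x))$ with a \emph{uniform} constant, ruling out pathological cancellation between the diagonal and off-diagonal contributions in the expression above. The non-coincidence of stable manifolds is the structural input that prevents such cancellation globally (if all stable manifolds coincided, $E$ would reduce to a single $C^1$ curve and the lower bound would fail), and the ergodic-theoretic step is needed to convert this qualitative non-degeneracy into the desired quantitative, uniform, almost-everywhere lower bound.
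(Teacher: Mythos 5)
The paper does not prove this lemma: it is quoted verbatim as Proposition~8 from Bedford's paper \cite{BEDFORD_BOX} and invoked as a black box, so there is no ``paper's own proof'' for your argument to be matched against. What I can do is assess whether your reconstruction is sound.

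Your treatment of the upper bound is essentially right. The lower-triangular structure of $D\phi_i$ makes $D\Phi_n$ lower-triangular, the $yy$-entry factors as $\prod c_{x_i}$ up to a bounded-distortion constant coming from $C^{1+\eps}$ regularity, and the $yx$-entry is dominated by the $yy$-entry once you factor out $\prod c_{x_i}$ and use that $a_i/c_i$ is uniformly bounded away from $1$; together with boundedness of $\alpha$ this does give $\vmod{E_n(x)}_H \leq N\exp(-S_n f_H(x))$ for every $x$ and every $n$.

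The lower bound sketch has a real gap, though, and it is precisely where all the difficulty lives. You write the $y$-separation as $[D\Phi_n]_{yy}\,\Delta\alpha + [D\Phi_n]_{yx}\,\Delta x$ and observe that non-coincidence of the stable manifolds ``prevents systematic cancellation,'' then say ergodicity plus a Birkhoff-type argument upgrades this to the a.e.\ lower bound. Two things are missing. First, the stable manifold $W^{ss}_{\phi_i}(z_i)$ is exactly the locus where this cancellation is exact for $\phi_i$, and non-coincidence of the $W^{ss}$ at the finitely many fixed points $z_i$ is a priori only a statement at those fixed points; you still have to explain how it furnishes a \emph{uniform, scale-free} amount of transversality between $E$ and the stable direction along a set of positive $\mu_\Sigma$-measure, which can then be pulled back by bounded distortion. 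Second, and more seriously, the statement asks for a single $N$ that works \emph{for every} $n\geq0$ for $\mu_\Sigma$-a.e.\ $x$, not just for $n$ eventually large. A Birkhoff ergodic average only controls the asymptotic frequency of visits to a good set; it does not by itself rule out the ratio $\vmod{E_n(x)}_H/\exp(-S_n f_H(x))$ dipping arbitrarily low along a sparse subsequence of $n$'s. The actual argument needs a recurrence/renewal structure — e.g.\ locate a cylinder $A\subset\Sigma$ of positive measure on which the separation is bounded below, use Poincar\'e recurrence to get infinitely many returns of $\sigma^k(x)$ to $A$, and then use bounded distortion and the multiplicative cocycle structure to propagate the separation from the last return before time $n$ to time $n$ itself, with a constant loss independent of the gap length. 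As written, your ergodic step conflates the statement you want with the much weaker statement that Birkhoff actually delivers.
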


\begin{lm} \label{lm:Birkhoff_conseq}
    If the possibility \ref{enum:wss_coincide}
    from Theorem \ref{thm:Bedford_dim}
    is not realized then
    for $\mu_\Sigma$-a.e. $x\in\Sigma$    
    \begin{equation*}
        \frac{\log \vmod{E_m(x)}_H}
        {\log \vmod{E_m(x)}_W} \to \theta
        ,\quad m\to \infty.
    \end{equation*}
\end{lm}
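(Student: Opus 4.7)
The plan is to combine the height estimate from Lemma \ref{lm:height} with an analogous width estimate, exploiting the fact that for the particular $G$ of the form \eqref{eq:Gform} one has the exact identity $f_H = \theta f_W$, and then pass to the limit using Birkhoff's ergodic theorem applied to $\mu_\Sigma$.

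First, I would set up the width estimate. Since the first coordinate of $\phi_i$ is just $\invbr_i$, the projection of $E_n(x)$ to the $x$-axis is exactly $I_n(x) = \invbr_{x_0}\circ\cdots\circ\invbr_{x_n}(I)$, so $|E_n(x)|_W = |I_n(x)|$. Using that $\code$ conjugates $\sigma$ to $f$ (so that $\code(\sigma^i(x)) = f^i(\code(x))$ for $\mu_\Sigma$-a.e.\ $x$), one has $\exp(S_n f_W(x)) = \prod_{i=0}^{n-1} f'(f^i(\code(x))) = (f^n)'(\code(x))$. Since $f$ is $C^2$ and uniformly expanding, the standard bounded distortion estimate on $I_n(x)$ yields a uniform constant $C>0$ with
\begin{equation*}
|E_n(x)|_W = |I_n(x)| \in [C^{-1},C]\,\exp(-S_n f_W(x)).
\end{equation*}
This is the counterpart of Lemma \ref{lm:height}.

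Next, I would use the explicit formulas displayed just before Remark \ref{rem:boxdim}, which give $f_H = \theta f_W$ at $\mu_\Sigma$-a.e.\ point; consequently $S_n f_H(x) = \theta S_n f_W(x)$. Combining with Lemma \ref{lm:height} and the width estimate above, for $\mu_\Sigma$-a.e.\ $x$ and every $n\geq 0$ one can write
\begin{equation*}
\log|E_n(x)|_H = -\theta\, S_n f_W(x) + \obig(1), \qquad
\log|E_n(x)|_W = -S_n f_W(x) + \obig(1),
\end{equation*}
with $\obig(1)$ terms uniformly bounded in $n$.

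Finally, since $f_W = \log f'\circ\code$ is bounded and bounded away from zero (as $f' \geq \lambda > 1$), it is in $L^1(\mu_\Sigma)$ with $\int f_W\,d\mu_\Sigma \geq \log\lambda > 0$. Because $\mu_\Sigma$ is ergodic (being the push-back of the ergodic $\mu$ under the a.e.\ bijection $\code$), Birkhoff's ergodic theorem gives $S_n f_W(x)/n \to \int f_W\,d\mu_\Sigma > 0$ for $\mu_\Sigma$-a.e.\ $x$. In particular $S_n f_W(x)\to +\infty$, so the bounded $\obig(1)$ remainders become negligible and the ratio of the two displayed expressions tends to $\theta$. The only step requiring any care is checking that bounded distortion really gives the uniform two-sided width bound for $\mu_\Sigma$-a.e.\ (rather than only typical) $x$; this is automatic since the distortion constant depends only on $f$, not on $x$.
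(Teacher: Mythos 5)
Your proposal is correct and follows essentially the same route as the paper: identify $|E_m(x)|_W$ with the length of the cylinder interval, control it (and, via Lemma \ref{lm:height}, $|E_m(x)|_H$) by $\exp(-S_m f_W)$ up to bounded factors, apply Birkhoff's ergodic theorem, and use $f_H=\theta f_W$. You are in fact somewhat more careful than the paper in making the bounded-distortion constant and the $\obig(1)$ remainders explicit, which is a genuine (if minor) improvement in rigor over the paper's terse proof.
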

\begin{proof}
    Note that
    \begin{equation*}
        \vmod{E_m(x)}_W = 
        \diam I_{m }(x) 
        = a_{x_0} (x) \cdots a_{x_{m}}( \sigma^{m}(x) ).
    \end{equation*}
    Then Birkhoff ergodic theorem implies 
    that
    \begin{equation*}
        \frac{\log \vmod{E_m(x)}_W   }{m+1}
        = S_m f_W(x) \to 
        \int f_W(x) d\mu_\Sigma(x) .
    \end{equation*}
    Analogously Lemma \ref{lm:height} implies that
    \begin{equation*}
        \frac{\log \vmod{E_m(x)}_H   }{m+1}
        \to 
        \int f_H(x) d\mu_\Sigma(x)
        .
    \end{equation*}
    The statement now follows immediately due 
    to the definition of functions $f_W,f_H,a_i,c_i$.
\end{proof}

%\subsection{Upper bound} 
%\label{ssec:exp_ubound} 

The following proposition together
with Theorem \ref{thm:Bedford_dim}
implies 
a version of Theorem \ref{thm:tdf_main}
where we have $\leq$ sign instead of $=$ in the
possibility \ref{enum:locholexp_val}.

Note that a similar proposition was proved
in \cite{BEDFORD_HOL} (it is a part Theorem 9 there) 
but there $b_i$ was asked to depend only 
on the first coordinate
which is not true for our choice of $G$.

\begin{prop}
    \label{prop:exp_ubound}
    If the possibility \ref{enum:wss_coincide}
    from Theorem \ref{thm:Bedford_dim}
    is not realized then
    for Lebesgue-almost every $x\in I$
    the \holder exponent of the function $\alpha$ 
    at point $x$
    is less or equal than $\theta$.
\end{prop}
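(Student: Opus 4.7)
The plan is to work in the symbolic space $\Sigma$ and translate Lemma \ref{lm:Birkhoff_conseq} into a pointwise upper bound on the \holder exponent. First I would record the geometric interpretation of $E_n(\xi)$: since $E$ is the graph of $\alpha$ (Lemma \ref{lm:E_is_graph}) and the inverse branches $\phi_i$ preserve this graph structure while restricting the base to the cylinder interval $I_n(\xi) = \invbr_{x_0}\circ\cdots\circ\invbr_{x_n}(I)$, the set $E_n(\xi)$ is the graph of $\alpha|_{I_n(\xi)}$. Consequently $|E_n(\xi)|_W = \diam I_n(\xi)$ and $|E_n(\xi)|_H$ is the oscillation of $\alpha$ on $I_n(\xi)$, and $\code(\xi)\in I_n(\xi)$ for every $n$.

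Next I would fix a $\mu_\Sigma$-generic $\xi \in \Sigma$ for which both Lemma \ref{lm:height} and Lemma \ref{lm:Birkhoff_conseq} apply and set $x = \code(\xi)$. For each $n$, choose $v_n, w_n \in I_n(\xi)$ with $|\alpha(v_n) - \alpha(w_n)| \geq \frac{1}{2}|E_n(\xi)|_H > 0$; by the triangle inequality at least one of them, call it $u_n$, satisfies $|\alpha(u_n) - \alpha(x)| \geq \frac{1}{4}|E_n(\xi)|_H$, and automatically $0 < |u_n - x| \leq |E_n(\xi)|_W \to 0$. For $n$ large both $|\alpha(u_n) - \alpha(x)|$ and $|u_n - x|$ lie in $(0,1)$, so taking logarithms and keeping track of signs gives
\begin{equation*}
    Q_n := \frac{\log|\alpha(u_n) - \alpha(x)|}{\log|u_n - x|} \leq \frac{\log 4 - \log|E_n(\xi)|_H}{-\log|E_n(\xi)|_W}.
\end{equation*}
By Lemma \ref{lm:Birkhoff_conseq} the right-hand side tends to $\theta$, so $\limsup_n Q_n \leq \theta$, and since $u_n \to x$ this yields $h_x(\alpha) \leq \theta$ for this $\xi$.

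Finally I would transfer the conclusion from $\mu_\Sigma$-a.e.\ $\xi \in \Sigma$ to Lebesgue-a.e.\ $x \in I$: $\code$ is $\mu_\Sigma$-a.e.\ injective and pushes $\mu_\Sigma$ forward to $\mu$, and for a $C^2$ expanding endomorphism of $S^1$ the acim $\mu$ has a density bounded away from zero and infinity, hence is equivalent to Lebesgue measure, so a $\mu$-null exceptional set is a Lebesgue-null exceptional set. I expect the main obstacle to be the careful bookkeeping of signs of logarithms when deducing the estimate on $Q_n$ from the inequalities on oscillations and distances (and verifying that $u_n$ can be chosen distinct from $x$, which follows automatically from $|E_n(\xi)|_H>0$); once those points are settled, the proposition is essentially a one-line consequence of Lemma \ref{lm:Birkhoff_conseq} combined with the geometric identification of $E_n(\xi)$ with the graph of $\alpha|_{I_n(\xi)}$.
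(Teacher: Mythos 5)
Your proof is correct and essentially the same as the paper's: both rest on Lemma \ref{lm:Birkhoff_conseq} together with the identification of $\vmod{E_n(\xi)}_W$ with $\diam I_n(\xi)$ and of $\vmod{E_n(\xi)}_H$ with the oscillation of $\alpha$ on that cylinder, and both transfer from $\mu_\Sigma$-a.e.\ to Lebesgue-a.e.\ via the equivalence of the acim with Lebesgue measure. The only difference is cosmetic — you argue directly by exhibiting $u_n\to x$ realizing a fixed fraction of the oscillation and letting the log-ratio converge, while the paper argues by contradiction (assuming $h_x(\alpha)>\theta$, deriving $\vmod{E_m}_H<\vmod{E_m}_W^{\beta}$ for some $\beta>\theta$, and contradicting the lower bound $\vmod{E_m}_H\geq\vmod{E_m}_W^{\gamma}$ supplied by the lemma).
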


\begin{proof}
    Suppose $x\in I$ is such that $h_x(\alpha) > \theta$.
    We will show that the Lebesgue measure 
    of such $x$ is zero.

    Choose $\theta<\gamma<\beta<\eta<h_x(\alpha)$.
    Then there exists a neighborhood $U$ of $x$
    such that for every $y\in U$
    $\vmod{\alpha(x)-\alpha(y) } < \vmod{x-y}^\eta < 1$.
    Take $m$ large enough so that 
    $x\in I' = I_{m}(\code^{-1}(x) ) \subset U$.

    Put $a = \argmin_{u\in  I'}$, 
    $b = \argmax_{u\in  I'}$.
    Then taking a larger $m$ if needed 
    \begin{gather} \label{neq:height_estim}
        \vmod{E_m(\code^{-1}(x) ) }_H = 
        (\alpha(b)-\alpha(a)) \leq \vmod{b-a}^\eta \leq
        \\
        \leq
        (\diam I')^\eta = 
        \vmod{E_m(\code^{-1}(x)) }_W^\eta 
        <
        \vmod{E_m(\code^{-1}(x)) }_W^\beta
        \nonumber
        .
    \end{gather}

    Lemma \ref{lm:Birkhoff_conseq} implies that
    for $\mu_\Sigma$-a.e. $x\in \Sigma$ for $m$ large enough
    \begin{equation*}
        \frac{\log \vmod{E_m(x)}_H}
        {\log \vmod{E_m(x)}_W}  \leq \gamma
        .
    \end{equation*}

    Therefore
        $\vmod{E_m(x)}_H \geq \vmod{E_m(x)}_W^\gamma$.
    This contradicts inequality \eqref{neq:height_estim}.
\end{proof}

Note that if $b_i$ does not depend on the second coordinate
one can prove that $\theta$ is also an upper bound (see 
\cite{BEDFORD_HOL}). However this proof is more 
technical and we will prove the upper bound in Section
\ref{sec:direct_approach}
 using a simpler approach.

\section{ ``Direct'' study of \holder exponents }
\label{sec:direct_approach}

Let $f$ be a $C^{2}$-smooth endomorphism of $\mfd$
such that $\lambda = \min_{y\in\mfd} f'(y)  > 1$.
Let $v:\mfd \to \Rb$ be a 
$C^{1}$ function. 
    
Here is the missing lower bound for the 
\holder exponent.
Thus this theorem completes the proof of 
Theorem \ref{thm:tdf_main}.

\begin{theorem}   \label{thm:uniquesol} 
    For every positive natural number $r$
    the only bounded solution to 
    equation \eqref{eq:cohomSolFormula}
    is $\theta$-\holder at every point.

    In other words $h_x(\alpha) \geq \theta$
    for every $x\in S^1$.
\end{theorem}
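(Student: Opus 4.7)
The plan is to prove the pointwise estimate $|\alpha(x)-\alpha(y)|\leq C|x-y|^\theta$ by working directly from the explicit formula
\[
\alpha(x) = -\sum_{i=0}^\infty \frac{v(E_r(f^i(x)))}{(f^{i+1})'(x)^\theta}
\]
and splitting the series at a cutoff $N=N(x,y)$ tailored to the pair $(x,y)$. I fix once and for all a small $\delta_0>0$ on which a bounded distortion estimate for $f$ holds (using only that $f$ is $C^2$ and uniformly expanding), and define $N$ as the largest integer such that $|f^N(x)-f^N(y)|\leq \delta_0$. Bounded distortion then gives $(f^i)'(\xi)\asymp (f^i)'(x)\asymp (f^i)'(y)$ for all $0\leq i\leq N$ and all $\xi$ on the segment between $x$ and $y$; in particular $(f^N)'(x)\cdot|x-y|\asymp 1$.

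For the tail $\sum_{i\geq N}$ I would use only the uniform bound $|v\circ E_r|\leq \|v\|_\infty$ together with $(f^{i+1})'(z)\geq (f^N)'(z)\,\lambda^{i+1-N}$ to majorize by a geometric series of ratio $\lambda^{-\theta}$, giving a bound of order $((f^N)'(x))^{-\theta}\lesssim |x-y|^\theta$. For the head $\sum_{i<N}$ I split each term as
\[
\frac{v(E_r(f^i(x)))-v(E_r(f^i(y)))}{(f^{i+1})'(x)^\theta} + v(E_r(f^i(y)))\left(\frac{1}{(f^{i+1})'(x)^\theta}-\frac{1}{(f^{i+1})'(y)^\theta}\right).
\]
The first summand is controlled by Lipschitz continuity of $v\circ E_r$ and the mean value estimate $|f^i(x)-f^i(y)|\lesssim (f^i)'(x)\,|x-y|$ (via bounded distortion), which gives $C((f^i)'(x))^{1-\theta}|x-y|$ after absorbing the $f'(f^i(x))^{-\theta}$ factor. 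For the second summand I write the difference in the form $(f^{i+1})'(x)^{-\theta}\bigl(1-\exp(-\theta(\log(f^{i+1})'(y)-\log(f^{i+1})'(x)))\bigr)$ and bound the exponent by $C\sum_{j\leq i}|f^j(x)-f^j(y)|\lesssim (f^i)'(x)\,|x-y|$, using that $\log f'$ is Lipschitz (this is where $C^2$ of $f$ is needed); since $i<N$ this quantity stays bounded, so the same bound $C((f^i)'(x))^{1-\theta}|x-y|$ emerges. Summing $\sum_{i=0}^{N-1}((f^i)'(x))^{1-\theta}$ telescopes geometrically to $O\bigl(((f^N)'(x))^{1-\theta}\bigr)$, and multiplying by $|x-y|$ gives $C|x-y|\cdot|x-y|^{\theta-1}=C|x-y|^\theta$.

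The main obstacle is the distortion bookkeeping: one must verify that on the segment between $x$ and $y$ the iterates $f^i$ remain bijective and have uniformly bounded distortion up to $i=N$, so that derivatives at interior points $\xi$ may be freely replaced by derivatives at the endpoints in every occurrence. Once $\delta_0$ is chosen correctly, the rest reduces to routine Lipschitz estimates and a geometric series summation. Note that no ergodic or measure-theoretic input is needed, so the bound $h_x(\alpha)\geq\theta$ holds at every $x\in S^1$, as claimed.
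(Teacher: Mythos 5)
Your proposal is correct and follows essentially the same strategy the paper uses in the proof of Proposition~\ref{thm:thetwist_expcircle}, part~1: split $\alpha(x)-\alpha(y)$ into a $v$-difference sum and a derivative-difference sum, cut at $N$ chosen so that $(f^N)'(x)\,|x-y|\asymp 1$ via bounded distortion (the paper's Lemma~\ref{lm:bdistort}), bound the tail by boundedness of $v$, and sum the head geometrically to obtain $((f^N)'(x))^{1-\theta}|x-y|\asymp|x-y|^\theta$. The only difference is cosmetic: you control the derivative-difference factor by the Lipschitz continuity of $\log f'$, whereas the paper uses an explicit recursion for $(f^k)''$ (formula~\eqref{eq:ddf_rep}) inside Lemma~\ref{lm:nlin_fderiv_bounds}; both yield the same per-term estimate.
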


\begin{proof}%[Proof of Theorem \ref{thm:uniquesol} ]
    The theorem follows from 
    the first part of Proposition \ref{thm:thetwist_expcircle}.
\end{proof}

    Denote $\maxdf = \max_{y\in \mfd} f'(y) $.

\begin{deff}
    We say that $f$ is pinching with constant $\pinconst>0$
    or just pinching
    if
    $    \pinconst = \maxdf / \lambda^2 < 1$.
\end{deff}

The following theorem is weaker version
of Theorem \ref{thm:tdf_main}.
\begin{theorem}  \label{thm:main}
     Suppose 
     $v\in C^2(\mfd)$ and not constant.
     Then there exist $0<\pinconst_0\leq 1$ and a natural number
     $\prexp\geq 1$
     such that
     if $f$ is pinching with constant $\pinconst<\pinconst_0$
     then the solution $\alpha$ to 
     Equation \eqref{eq:thetaTwistCohom}
    given by the formula \eqref{eq:cohomSolFormula}
    is not $(\theta+\gamma)$-\holder for every $\gamma>0$
    at almost every point.

    In other words $h_x(\alpha) \leq \theta$
    for almost every $x\in S^1$.
\end{theorem}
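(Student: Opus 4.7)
I will argue pointwise by contradiction: fix $\gamma>0$ and show that $\{x\in\mfd:h_x(\alpha)\geq\theta+\gamma\}$ is Lebesgue-null. For each $x$ in a full-measure set, the plan is to construct a sequence of partner points $y_n\to x$ along which $|\alpha(x)-\alpha(y_n)|/|x-y_n|^{\theta+\gamma}\to\infty$.

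For a small $\delta>0$ chosen later and each $n\geq 1$, let $y_n$ be the point in the inverse branch of $f^n$ through $x$ with $|f^n(x)-f^n(y_n)|=\delta$. Since $f\in C^2$ is uniformly expanding, Koebe-type bounded distortion gives $|x-y_n|\asymp\delta\cdot\dfx{n}^{-1}$. Splitting the series \eqref{eq:cohomSolFormula0} at index $n$ writes $\alpha(x)-\alpha(y_n)=H_n+T_n$, with $H_n$ (head) collecting the terms $i<n$ and $T_n$ (tail) those with $i\geq n$. The semigroup identity $(f^{i+1})'(x)=(f^{i+1-n})'(f^n x)\cdot\dfx{n}$ together with distortion collapses the tail to
\begin{equation*}
T_n=\dfx{n}^{-\theta}\bigl[\alpha(f^n x)-\alpha(f^n y_n)\bigr]+O(\delta)\cdot\dfx{n}^{-\theta}.
\end{equation*}
For the head, the pull-back bound $|f^i x-f^i y_n|\lesssim\delta/(f^{n-i})'(f^i x)$, the $C^1$ control of $v$, bounded distortion for $(f^{i+1})'(x)/(f^{i+1})'(y_n)$, and the inequality $(f^{i+1})'(x)\geq\dfx{n}/\maxdf^{n-i-1}$ majorize each head term by $\delta\,\dfx{n}^{-\theta}\cdot(\maxdf^\theta/\lambda)^{n-i}$. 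Choosing $\pinconst_0\leq\min(1,\lambda^{1/\theta-2})$ so that the pinching inequality $\maxdf<\pinconst_0\lambda^2$ forces $\maxdf^\theta/\lambda<1$, the geometric sum yields $|H_n|\leq C\delta\,\dfx{n}^{-\theta}$.

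It remains to ensure $|\alpha(f^n x)-\alpha(f^n y_n)|\geq c>0$ along a positive-density subsequence of $n$. I first pick $\prexp$ large enough so that $\alpha$ is non-constant: otherwise \eqref{eq:thetaTwistCohom} would force $v(E_\prexp(x))=c_0(1-f'(x)^\theta)$, which for $\prexp$ large fails for nonlinear $f$ and non-constant $v$, since the left side gains $\prexp$-fold oscillation that the right cannot match. Then for $\delta$ small the set $A_\delta=\{z\in\mfd:|\alpha(z)-\alpha(z+\delta)|\geq c\}$ has positive Lebesgue measure for some $c>0$, and by ergodicity of $f$ with respect to its absolutely continuous invariant measure (equivalent to Lebesgue), for a.e.\ $x$ the orbit $\{f^n x\}$ visits $A_\delta$ with positive asymptotic frequency. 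Fixing $\delta<c/(2C)$, for such good $n$ one obtains $|\alpha(x)-\alpha(y_n)|\geq(c/2)\dfx{n}^{-\theta}$, hence
\begin{equation*}
\frac{|\alpha(x)-\alpha(y_n)|}{|x-y_n|^{\theta+\gamma}}\gtrsim\dfx{n}^{\gamma}\geq\lambda^{n\gamma}\to\infty,
\end{equation*}
contradicting $h_x(\alpha)\geq\theta+\gamma$. The hard part is the head estimate: a naive termwise majorant for $H_n$ is of the same order as the tail and could cancel or swamp it, and only the pinching hypothesis converts the majorant into a vanishing geometric sum comparable to $\delta\,\dfx{n}^{-\theta}$. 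A secondary but essential technical step is a separate lemma securing non-constancy of $\alpha$ by choosing $\prexp$ above an explicit threshold determined by $f$ and $v$.
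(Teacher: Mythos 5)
Your decomposition and the pinching analysis of the head are sound, and the overall strategy (renormalize the series at level $n$, control the head by a geometric sum in $(\maxdf^\theta/\lambda)^{n-i}$, push the tail to $\dfx{n}^{-\theta}[\alpha(f^n x)-\alpha(f^n y_n)]$, and find good $n$ by ergodicity) is a legitimate alternative to the paper's reduction to Proposition~\ref{thm:thetwist_expcircle}. The paper instead works with $v'$ at a well-chosen point $c$ through the $i=N$ term of the series and never needs an oscillation estimate for $\alpha$ itself.

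The step that does not close, however, is the oscillation lower bound. You need a pair $(\delta,c)$ with $\Leb\{z:|\alpha(z)-\alpha(z+\delta)|\geq c\}>0$ \emph{and} $\delta<c/(2C)$, where $C$ is the fixed constant from the head estimate. But $c=c(\delta)$ is constrained by how much $\alpha$ actually oscillates at scale $\delta$, and the quantifier order in your text (``for $\delta$ small\ldots for some $c>0$\ldots fixing $\delta<c/(2C)$'') is circular. If $\alpha$ happened to be Lipschitz with constant $L$ — which nothing you have established forbids, and which the dichotomy in Theorem~\ref{thm:tdf_main} explicitly allows (option~1: $\alpha\in C^1$) — then $c(\delta)\le L\delta$ and the needed inequality fails for $L\le 2C$. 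Non-constancy of $\alpha$, even for $\prexp$ large, gives no lower bound of the form $c(\delta)\gg\delta$ at any scale, so the argument grinds to a halt exactly where the conclusion is supposed to be produced. This is the role of condition~(A) in the paper: it is a quantitative requirement on $|v'|$ versus the nonlinearity of $f$ which, after replacing $v$ by $v\circ E_r$ with $r$ large to boost $v'$, guarantees the needed lower bound directly from $v'$ rather than from $\alpha$; with that input your renormalization would close. As written, securing ``$\alpha$ non-constant'' is a necessary but much weaker condition than what the final inequality actually uses.

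A secondary, minor point: $A_\delta$ must account for both signs $z\pm\delta$ since $f^n y_n$ may sit on either side of $f^n x$; this is easily patched.
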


\begin{rem}
    This theorem is a 
    consequence of 
    Proposition \ref{prop:exp_ubound}
    but we will prove it using a completely different
    method.
\end{rem}

\begin{rem}  \label{rem:Hardy0}
    In contrast with Remark \ref{rem:Hardy_tdf}
    here if one puts
        $r = 1$, $f(x) = E_b(x)$ and  
        $v(x) = \cos (2\pi x)$, 
    then Theorem \ref{thm:main}
    implies an ``almost-every'' version
    of Hardy's result \textbf{only if $b$ is large enough}.

    We discuss it in more detail in Remark \ref{rem:Hardy}.
\end{rem}

\begin{rem}
    Note 
    that the pinching requirement is necessary 
    at least for $\prexp = 1$
    because
    taking $\phi$ to be an $\omega$-\holder
    function one can use the 
    formula for equation \eqref{eq:thetaTwistCohom}
    to \emph{define} $v$  
    by
    formula \eqref{eq:thetaTwistCohom}:
    $v(x) = \alpha(\phi(x)) - (f'(x))^{\theta} \phi(x)$.
    As equation \eqref{eq:thetaTwistCohom} has only one bounded solution,
    this implies that this solution is equal to $\phi$ and is 
    therefore $\omega$-\holder. As $\omega$ can be taken arbitrarily
    it shows that the conclusion of 
    Theorem \ref{thm:main} does not hold.

    In other words the condition on a pinching constant
    is needed to ensure that possibility 
    \ref{enum:locholexp_val} from 
    Theorem \ref{thm:tdf_main} is not realized.
\end{rem}

\subsection{Intermediate results}

Theorem \ref{thm:main}
will follow from a more general technical Proposition \ref{thm:thetwist_expcircle}.

First we state a simple lemma (see \cite{MANE_ERGTH} p169 for details)
to introduce a constant that will be used
later.

\begin{lm}[Distortion estimate]  \label{lm:bdistort}
There exists a constant $\Cbdistort\geq 1$
such that for
every $x\in\mfd$,
every natural $N$ and every $0<h<1/2$ such that
$    h \leq ({\Cbdistort} {\dfxx{N}{x} })^{-1} $,
for every $\sigma \in \{ -1,1 \} $
the following estimates hold:
\begin{equation*}
    \frac{1}{\Cbdistort} \leq
    \frac{\dfxx{N}{x}}{\dfxx{N}{x+\sigma h}}
    \leq \Cbdistort 
    .
\end{equation*}
\end{lm}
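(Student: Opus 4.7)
The plan is to take logarithms of $\dfxx{N}{x}$ and exploit the chain-rule telescoping. Put $\psi = \log f'$, which is $C^1$ on the compact manifold $\mfd$ because $f$ is $C^2$ and $f'\geq\lambda>0$; set $L = \sup_{y\in\mfd}|\psi'(y)|$. Subtracting the telescopic decompositions of $\log\dfxx{N}{x+\sigma h}$ and $\log\dfxx{N}{x}$ and using that $\psi$ is $L$-Lipschitz, one gets
\begin{equation*}
\left|\log\frac{\dfxx{N}{x+\sigma h}}{\dfxx{N}{x}}\right|
\leq L\sum_{j=0}^{N-1}|f^j(x+\sigma h)-f^j(x)|.
\end{equation*}
If the forward geometric-decay bound
\begin{equation*}
|f^j(x+\sigma h)-f^j(x)|\leq \lambda^{-(N-j)},\qquad 0\leq j\leq N-1,\qquad(\star)
\end{equation*}
can be established, the right-hand side is at most $L/(\lambda-1)$, and the choice $\Cbdistort = \exp\bigl(L/(\lambda-1)\bigr)$ delivers the two-sided bound in the statement.

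The content of the lemma therefore sits entirely in $(\star)$, which has a mild circularity: controlling the forward separation of the orbits of $x$ and $x+\sigma h$ wants a preliminary distortion estimate along them. I would untangle this by induction on $N$, proving the lemma with the $\Cbdistort$ fixed above. For $N=1$ the conclusion reduces to $|\log f'(x+\sigma h)-\log f'(x)|\leq L h\leq L/(\Cbdistort\,\lambda)\leq\log\Cbdistort$, valid for the chosen $\Cbdistort\geq 1$. For the inductive step, assume the lemma at times $1,\ldots,N-1$ and fix $1\leq i\leq N-1$. The mean value theorem produces $\xi_i = x+\sigma h'$ with $0\leq h'\leq h$ such that $|f^i(x+\sigma h)-f^i(x)|=\dfxx{i}{\xi_i}\cdot h$. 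Because $\dfxx{N}{x}\geq\dfxx{i}{x}$, the hypothesis $h\leq(\Cbdistort\,\dfxx{N}{x})^{-1}$ implies $h'\leq(\Cbdistort\,\dfxx{i}{x})^{-1}$, so the inductive hypothesis applied at $x$ with displacement $h'$ yields $\dfxx{i}{\xi_i}\leq\Cbdistort\,\dfxx{i}{x}$. Combining this with $\dfxx{N}{x}=\dfxx{i}{x}\cdot\dfxx{N-i}{f^i(x)}\geq\dfxx{i}{x}\,\lambda^{N-i}$ gives
\begin{equation*}
|f^i(x+\sigma h)-f^i(x)|
\leq \Cbdistort\,\dfxx{i}{x}\cdot h
\leq \frac{\dfxx{i}{x}}{\dfxx{N}{x}}
\leq \lambda^{-(N-i)},
\end{equation*}
which is $(\star)$ at index $i$; the case $i=0$ is immediate from $h\leq\lambda^{-N}$.

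The main obstacle is really just this bookkeeping, and the key subtlety is that the same $\Cbdistort$, depending only on $L$ and $\lambda$, must propagate the orbit-separation and distortion estimates simultaneously as $N$ grows. The constraints $h<1/2$ and $h\leq(\Cbdistort\,\dfxx{N}{x})^{-1}$ are imposed exactly to keep the interval $[x,x+\sigma h]$ and each of its first $N$ iterates short enough to lie in a single univalent branch of $f$, preventing wrap-around on $\mfd = S^1$ and legitimating the applications of the mean value theorem above. Since this is the standard Ma\~{n}\'{e}-type distortion estimate, at this point I would simply invoke \cite{MANE_ERGTH} for the complete bookkeeping rather than reproduce it here.
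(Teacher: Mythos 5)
Your argument is correct and is essentially the standard Ma\~n\'e bounded-distortion proof that the paper itself does not reproduce but instead delegates to \cite{MANE_ERGTH} (p.~169); your telescoping-plus-induction route, with $\Cbdistort=\exp\bigl(L/(\lambda-1)\bigr)$ propagating both the orbit-separation bound $(\star)$ and the distortion estimate, is exactly what one finds there.
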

    Let $\Cbdistort$ be a constant from Lemma \ref{lm:bdistort}.
    Denote 
    \begin{gather*}
        \maxv = \max_{y\in \mfd} \vmod{v(y)},\ 
        \maxvprime = \max_{y\in \mfd} \vmod{v'(y)}
        .
    \end{gather*}

    %Let $0<h_0<\hat{h}$ be small enough so that there

\begin{deff}
    For a function $\phi:\mfd \to \Rb$
    and $\gamma>0$
    we say that $C>0$ is a local $\gamma$-\holder
    constant for $\phi$ at point $y$ if
    it is the infimum of $C'>0$ such that
    for every $0<h<1$
    the following estimate holds:
    \begin{gather}
        \vmod{\phi(y+{h})  - \phi(y)  }
            \leq C'         % \label{eq:vprimeDiff}
            {{h}^{\gamma}} 
            .
                        \label{eq:vDiff}            
    \end{gather}
\end{deff}

\begin{rem} \label{rem:v_modif}
    Let 
    $r>0$ be a positive natural number and put
   $     \vmodif(x) = v( E_r(x) )$.
    If $C$ is a local $\eps$-\holder
    constant for $v'$ at point $y$ then
    $Cr^{1+\eps}$ is a local $\eps$-\holder
    constant for $\vmodif'(x)$.
\end{rem}

%\begin{rem}
%    If $\phi$ is $\omega$-\holder for $\omega>0$, then
%    for every $x$, local $\omega$-\holder constant at $x$
%\end{rem}

    Let $\eps>0$.
    Denote by $\Cdv$ the supremum of  
        the local $\eps$-\holder constants for $v'$ 
        over all $x\in\mfd$. It is finite 
        if $v$  is $C^{1+\eps}$.

    Let $\powk$ be a positive natural number. 

    \begin{deff}    \label{def:acond}
    Let $c$ be a point of $\mfd$
    and $v$ be $C^{1+\eps}$. %(not necessarily periodic).
    Say that the pair $(f,v)$ satisfy condition (A) 
    for $c\in S^1$ and $\powk \geq 1$
    if
    for every $0\leq j \leq \powk-1$
    all $v'(f^j(c))$ are either strictly positive or
    strictly negative
    simultaneously 
    and
    the following estimates hold
    \begin{gather}
        \frac
        {6^{1+1/\eps} \Cbdistort^{2-\theta} 
        \maxv \Cdv^{1/\eps}  }
        {(\lfthk) \mindvcj^{1+1/\eps}} 
        %G
        \frac{\maxdf^{\powk-1+\theta}}
        {\lambda^{(\powk+1)\theta} }
        \leq 1;  \label{neq:A_bd1}
        \\
        %%%%%%%%%%%%%%%%%%%%%%%%%%%%%
        \frac{ \maxdv \Cbdistort^{2} }
        {  \lfothk  }  
        \frac{\maxdf^\theta}
        {\lambda^{\powk(1-\theta)+\theta} }
        \enbrace{
        \frac{ \maxdf }
        { \lambda }  }^{j(1-\theta)}
         \leq  \nonumber
         \\
         \leq
         \dvcj/4 
             , 
             \quad  0\leq j\leq \powk -1;
             \label{neq:A_bd2}
        \\
        %%%%%%%%%%%%%%%%%%%%%%%%%%%%%%
        \frac{\mindvcj}{6\Cdv} \leq \maxdf^{\powk-1}
        \label{neq:A_bd3} 
        ,
    \end{gather}

    where 
     $   \mindvcj = \min_{0 \leq j \leq \powk-1} 
        \vmod{v'(f^j(c))} $.
\end{deff}

%\begin{rem}
%    If $C$ is a global $\eps$-\holder
%    constant for $v'$ (or just a maximum of $v''$ if $v\in C^2(\mfd)$)
%    then we can replace $\Gamma_2$ by $2C$ in condition (A).
%\end{rem}

\begin{rem}
    Condition (A) from Definition \ref{def:acond} 
    puts rather strict restrictions on
    the allowed level of nonlinearity of $f$.
    If $\powk>1$ then morally, to satisfy all of them
    $\theta$ has to be close to $1$, $\lambda$ should be 
    large and $f$ should be close to linear.

    If $\powk = 1$ then condition (A) asks
    for the pinching constant
    to be small enough (see the proof of Theorem \ref{thm:main}).

    The fact that a condition on the size of $\lambda$
    is sufficient to get absence of \holder continuity
    for some exponents in the linear case 
    (when $f = E_\lambda $ for natural $\lambda>1$)
    was mentioned in \cite{BOUSCH_MANUSCRIPT}.

%    Note than we can not just take $c$ to 
%    be an arbitrary point where $f(c)>0$ 
%    and take a large enough power of $f$ 
%    to make the inequalities be satisfied as 
%    the distortion bound $\Cbdistort$ rises with a power.
\end{rem}

%\begin{rem} %\label{rem:condA_lin}
%    For the linear case (If $f = E_\lambda $ 
%    for natural $\lambda>1$)
%    the bounds
%        \eqref{neq:A_bd1}-\eqref{neq:A_bd3}
%    from the condition (A)
%    transform into the following:
%    \begin{gather*}
%        %G'
%        \frac
%        {6^{1+1/\eps} \maxv \Cdv^{1/\eps}  }
%        {(\lfthk) \mindvcj^{1+1/\eps}} 
%        \lambda^{\powk(1-\theta) - 1}
%        \leq 1,       %  \label{neq:lin_bd1}
%        \\
%        %%%%%%%%%%%%%%%%%%%%%%%%%%%%%%%%%%%%%%%%%%
%        \frac{ \maxdv  }
%        {  \lfothk  }  
%        {\lambda^{-\powk(1-\theta)} }
%         \leq  \mindvcj/4 
%                %\label{neq:lin_bd2}
%        %%%%%%%%%%%%%%%%%%%%%%%%%%%%%%%%%%%%%%%%%%%
%        \\
%        \frac{\mindvcj}{6\Cdv} \leq \lambda^{-\powk+1}
%                %\label{neq:lin_bd3}
%        .
%    \end{gather*}
%
%\end{rem}

\begin{prop}
    \label{thm:thetwist_expcircle}

    Let $\alpha$ be
            the only bounded solution $\alpha$ of equation 
            \eqref{eq:thetaTwistCohom}.
    Then

    \begin{enumerate}
        \item
            $\alpha$ is $\theta$-\holder for every natural $r\geq 1$.
            There is an 
            %explicit 
            upper bound %(see \eqref{neq:hol_const_upper_bd}
            %for exact value
            %)
            for the local $\theta$-\holder constant of $\alpha$ at every point.

        \item    \label{enum:thmpart_nonhol}
            Let $v$ be a $C^{1+\eps}$ function. 
            If $r=1$,
            $f$ is pinching and
            there exists a point $c\in\mfd$ such that
            $(f,v)$ satisfies condition (A) 
            from Definition \ref{def:acond}
            for $c\in\mfd$ and $\powk$
            then
            there exists a constant 
            $C_0=C_0(\theta,v,f,c,\powk)>0$
           % (see formula \eqref{eq:C0formula} for the exact value) 
            such that
            for almost every $x\in\mfd$
            for every $\hat{h}$
            there exists $0<h < \hat{h}$ 
            such that
            the following lower bound holds:
            \begin{gather*}
                 \vmod{\alpha(x) - \alpha(x+h) }  
                \geq
                C_0 
                { h^\theta} 
                .
            \end{gather*}
            In particular, 
            for almost every $y\in \mfd$,
            the function
         $\alpha$ is not $(\theta+ \gamma)$-\holder 
        at $y$ for every $\gamma>0$.

    \end{enumerate}
\end{prop}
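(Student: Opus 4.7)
Fix $x\in\mfd$ and $0<h<1$, and choose the integer $N=N(x,h)$ so that $h\cdot(f^N)'(x)$ lies in a bounded range such as $[1/(\Cbdistort\maxdf),1/\Cbdistort]$; Lemma \ref{lm:bdistort} then keeps $(f^{i+1})'(x+h)$ and $(f^{i+1})'(x)$ comparable up to the factor $\Cbdistort$ for every $0\leq i<N$. Using \eqref{eq:cohomSolFormula0} write
\begin{equation*}
    \alpha(x+h)-\alpha(x) = -\sum_{i<N}\Delta_i - \sum_{i\geq N}\Delta_i,\qquad
    \Delta_i = \frac{v(E_r(f^i(x+h)))}{(f^{i+1})'(x+h)^\theta} - \frac{v(E_r(f^i(x)))}{(f^{i+1})'(x)^\theta}.
\end{equation*}
The tail $i\geq N$ is dominated by the geometric series $2\maxv\lambda^{-(N+1)\theta}/\lfth$; the lower threshold on $h(f^N)'(x)$ turns $\lambda^{-N\theta}$ into $O(h^\theta)$. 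For $i<N$ split $\Delta_i$ into a numerator-difference piece (mean value theorem, with the factor $r$ accounted for by Remark \ref{rem:v_modif}, then bounded distortion) and a denominator-difference piece (using $(f^{i+1})'(x+h)/(f^{i+1})'(x)-1=O((f^i)'(x)h)$ together with boundedness of $v$). Both pieces reduce to $O(h\cdot(f^i)'(x)^{1-\theta})$, and summation telescopes geometrically with ratio $\lambda^{-(1-\theta)}$ (giving the factor $1/\lfoth$), producing a total $O(h\cdot(f^N)'(x)^{1-\theta})=O(h^\theta)$ uniformly in $x$.

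\textbf{Part 2 (non-improvability a.e.).} Since $f$ admits a unique ergodic absolutely continuous invariant measure equivalent to Lebesgue, Birkhoff's theorem applied to indicator functions of small neighbourhoods of $\{c,f(c),\dots,f^{\powk-1}(c)\}$ shows that for Lebesgue-a.e.\ $x\in\mfd$ there is a sequence $N_m\to\infty$ along which $f^{N_m+j}(x)$ is as close to $f^j(c)$ as desired, uniformly in $0\leq j<\powk$. For such $x$ and $N=N_m$, set $h_N=\eta\cdot(f^N)'(x)^{-1}$ with a small positive constant $\eta=\eta(\theta,v,f,c,\powk)$ to be chosen, and decompose
\begin{equation*}
    \alpha(x+h_N)-\alpha(x) = -T_{\mathrm{past}}-T_{\mathrm{mid}}-T_{\mathrm{tail}}
\end{equation*}
according to index ranges $i<N$, $N\leq i<N+\powk$, $i\geq N+\powk$. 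The goal is $|T_{\mathrm{mid}}|\geq 2(|T_{\mathrm{past}}|+|T_{\mathrm{tail}}|)$, which yields $|\alpha(x+h_N)-\alpha(x)|\geq C_0 h_N^\theta$. For $T_{\mathrm{mid}}$, Taylor-expand $v$ at $f^j(c)$: since condition (A) forces all $v'(f^j(c))$ to share a common sign, the $\powk$ central contributions add constructively to $\gtrsim \eta^{1-\theta}\mindvcj h_N^\theta$ times an explicit positive geometric factor built from $f'(f^j(c))$ and $\theta$, the Taylor remainder being controlled by $\Cdv$ and absorbed by \eqref{neq:A_bd1}. The tail satisfies $|T_{\mathrm{tail}}|\lesssim \maxv\lambda^{-\powk\theta}/\lfth\cdot h_N^\theta$, again dominated through \eqref{neq:A_bd1}. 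The past is bounded by specialising the Part~1 estimate to $h=h_N$, giving $|T_{\mathrm{past}}|\lesssim \maxvprime\Cbdistort^{2}\eta^{1-\theta} h_N^\theta/(\lambda\lfoth)$, which \eqref{neq:A_bd2}, applied term-by-term for $0\leq j<\powk$, forces below the middle.

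\textbf{Main obstacle.} The critical difficulty is $T_{\mathrm{past}}$: the Lipschitz bound on $v$ produces precisely the scaling $h_N^\theta$ already realised by $T_{\mathrm{mid}}$, so nothing is gained from expansion of $f$ alone. The pinching hypothesis $\pinconst=\maxdf/\lambda^2<1$ is exactly what prevents the factors $\maxdf^\theta/\lambda$ and $(\maxdf/\lambda)^{j(1-\theta)}$ appearing in \eqref{neq:A_bd2} from overwhelming $\dvcj$, while \eqref{neq:A_bd3} calibrates $\mindvcj$ against $\maxdf^{\powk-1}$ so that the chosen $\eta$ keeps the Taylor approximation valid throughout the central block. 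Orchestrating the three inequalities of condition (A) so that $T_{\mathrm{past}}$ and $T_{\mathrm{tail}}$ each surrender a factor of roughly $1/4$ to $T_{\mathrm{mid}}$ is where the bookkeeping of the proof is concentrated.
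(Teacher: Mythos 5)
Your overall structure matches the paper's: decompose the difference into terms, use bounded distortion with the scale $N$ chosen so that $h(f^N)'(x)$ lies in a fixed range, estimate tail and past by geometric series, and let the central block dominate because $v'(f^j(c))$ has a uniform sign. The $T_{\mathrm{past}}$/$T_{\mathrm{mid}}$/$T_{\mathrm{tail}}$ split for general $\powk$ is essentially the paper's $B_j$ decomposition in the Appendix, with the dominant block being the $\powk$ terms at indices $\powk N, \dots, \powk N + \powk - 1$.

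There are, however, two substantive issues in Part 2. First, you silently drop the ``denominator-difference'' sum \eqref{eq:alphadif2smnd} that arises because the prefactors $(f^{i+1})'(x)^{-\theta}$ and $(f^{i+1})'(x+h)^{-\theta}$ differ when $f$ is nonlinear. You account for it in Part~1, but in Part~2 you Taylor-expand only the numerator of $T_{\mathrm{mid}}$ and estimate $T_{\mathrm{past}}$ and $T_{\mathrm{tail}}$ as if that sum weren't there. That sum is not negligible a priori: per term it is of order $h\cdot(f^{i+1})'(x)^{1-\theta}$, which at $i\approx N$ is exactly the $h^\theta$ scaling that $T_{\mathrm{mid}}$ produces, so it could swamp the gain. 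The paper isolates this sum and controls it by Lemma \ref{lm:nlin_fderiv_bounds}, where the bound is $O\bigl(h^\theta(\maxdf/\lambda^2)^{s\theta}\delta_1^{-\theta} + \Csecest(s)Nh\bigr)$; the factor $(\maxdf/\lambda^2)^{s\theta}$ goes to zero as $s\to\infty$ precisely \emph{because} of the pinching hypothesis $\pinconst<1$, and the $Nh$ term is absorbed by taking $N$ large since $h$ is exponentially small in $N$. This is the actual (and essentially the only) place pinching is used in the proof.

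Second, and as a consequence, your ``Main obstacle'' section misattributes the role of pinching. The bound on $T_{\mathrm{past}}$ in the paper (the $i<N$ terms of $S_{0,N}$) is obtained directly from bounded distortion and geometric summation, and it is made small relative to $v'(f^j(c))$ by inequality \eqref{neq:A_bd2}, which is simply \emph{assumed} as part of condition (A); pinching is not what ``prevents the factors from overwhelming $\dvcj$'' --- that is what \eqref{neq:A_bd2} itself says. Pinching instead guarantees that the nonlinearity term can be made negligible. Keeping these two mechanisms separate is important: in the linear case $f=E_\lambda$ the nonlinearity term is identically zero and pinching is irrelevant, yet condition (A) is still needed to control $T_{\mathrm{past}}$. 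Your sketch would pass for the linear case, but for general $C^2$ expanding $f$ it is missing the estimate for \eqref{eq:alphadif2smnd} and therefore does not close.

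Minor remarks: the paper uses density of a.e.\ orbit rather than Birkhoff's theorem (either works); and the paper keeps $\delta_1\leq h\Cbdistort(f^N)'(x)\leq\delta_2$ with $\delta_1<\delta_2$ chosen by explicit formulas, whereas you fix a single $\eta$; the existence of a valid $\eta\in[\delta_1,\delta_2]$ is exactly what \eqref{neq:A_bd1} and \eqref{neq:A_bd3} deliver, so your reduction is fine once that is said.
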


We will do the proof of this proposition for $\powk = 1$ in 
    Section \ref{sec:proofnonlin_large_lbd}
and for general $\powk$ in Appendix.

Now the proof of Theorem \ref{thm:main} is fairly easy.
\begin{proof}[Proof of Theorem \ref{thm:main}]
%    The proof of the fact that 
%    the bounded solution of  is 
%    $\alpha$ is $\theta$-\holder
%    not only for 
%    but for a general r

    The proof readily follows from %Proposition \ref{thm:thetwist_expcircle}
    Proposition \ref{thm:thetwist_expcircle}
    for $\powk = 1$
    and Remark \ref{rem:v_modif}.

    Put $c'$ to be a point where the maximum of 
    $v'$ is attained.
    Put $\tilde{v} =  ( v \circ E_r  )$.
    %If $C = v'(c)$ then 
    For a point $c\in E_r^{-1}(c')$
    we get  $\tilde{v}'(c) = rv'(c') $.
         
    Remark \ref{rem:v_modif} allows
    to choose $r$ large enough so that 
    the third estimate from condition (A) is satisfied.
    Then if the pinching constant is small enough             
    the other two estimates from Condition (A) 
    are satisfied as well.

    Therefore $(f,v)$ satisfy 
    condition (A) for $c$ and $\powk = 1$
    and Proposition \ref{thm:thetwist_expcircle} applies.
\end{proof}

\begin{rem}
    A choice (not optimal) of 
    constants $C_0$ and $\delta_2$ can be written explicitly.
\end{rem}

\begin{rem}
    Note that the absence of \holder continuity at
    almost every point does not automatically
    imply absence of \holder continuity
    at every point since the values of $h$ for which
    a lower bound as above could be written
    can strongly depend on a point.
\end{rem}

\begin{rem}
    Note that condition (A) does not imply pinching 
    automatically and vice versa.
\end{rem}

%\begin{rem}
%    It is possible to state Proposition \ref{thm:thetwist_expcircle}
%    for $v\in C^{1+\eps}$ with a similar, but more complicated-looking condition 
%    instead of condition (A).
%\end{rem}

It is also possible to state 
a theorem very similar to
part \ref{enum:thmpart_nonhol} 
of the Proposition \ref{thm:thetwist_expcircle}
that guarantees lower bound for 
a different 
set of points (some of which
may not belong to the set of
full measure from the Proposition \ref{thm:thetwist_expcircle}),
with different quantifiers.

\begin{prop}
    Let $v:\mfd \to \Rb$ be a 
    $C^{1+\eps}$ function. 

    If $r=1$, $f$ is pinching and
        there exists a point $c$ such that
        $(f,v)$ satisfies condition (A) at point $c$
        for power $\powk$
        then
        there exists constant 
        $C_0=C_0(\theta,v,f,c,\powk)>0$
        %(see formula \eqref{eq:C0formula} for the exact value) 
        and $\delta_2 = \delta_2(\theta,v,f,c,\powk)>0$
        %(see formula \eqref{eq:delta_2_final_def} for exact value)
        such that
        for every $\hat{h}$
        there exists a natural $N$
        such that
        for every $x\in f^{-N}(  B_{\delta_2}(c)  )$
        there exists $0<h < \hat{h}$ 
        such that
        the following lower bound holds:
        \begin{gather*}
            { \vmod{\alpha(x) - \alpha(x+h) } } 
            \geq
        C_0 { h^\theta} 
            .
        \end{gather*}

        In particular  for every $\gamma>0$
    $\alpha$  is not $(\theta+ \gamma)$-\holder at $x$.
\end{prop}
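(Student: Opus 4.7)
The plan is to re-run the argument used for part \ref{enum:thmpart_nonhol} of Proposition \ref{thm:thetwist_expcircle}, simply rearranging the quantifiers in $x$ and $N$. Inspecting that proof, its core mechanism is the following: for any $y\in B_{\delta_2}(c)$ and any sufficiently small base scale $\tilde h$, one produces an explicit perturbation of size $\tilde h$ with $\vmod{\alpha(y+\tilde h)-\alpha(y)}\geq C_0'\tilde h^\theta$ (coming from the dominant term of the series \eqref{eq:cohomSolFormula0}) and then pulls it back by $f^{-N}$ to any $x$ with $f^N(x)=y$, producing the witness $h\asymp \tilde h/\dfxx{N}{x}$. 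The only role played by ``almost every $x$'' in Proposition \ref{thm:thetwist_expcircle} is to guarantee recurrence of the orbit of $x$ to $B_{\delta_2}(c)$, so that for each prescribed $\hat h$ such an $N$ exists and makes $h<\hat h$. Here the quantifiers are inverted: $N$ is chosen first, depending on $\hat h$, and one then restricts to the set of $x$ whose $N$-th iterate happens to sit in $B_{\delta_2}(c)$.

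Concretely, the plan is: adopt $\delta_2$ and $C_0$ as in the proof of Proposition \ref{thm:thetwist_expcircle}, with $\delta_2$ small enough that condition (A) persists, up to a tolerable loss in constants, at every point of $B_{\delta_2}(c)$ (so that $v'$ retains its sign and a definite lower bound there). Given $\hat h$, pick $N$ so that $\Cbdistort \tilde h_0/\lambda^N<\hat h$, where $\tilde h_0$ is the largest admissible base scale supplied by that proposition near $c$. For any $x\in f^{-N}(B_{\delta_2}(c))$, set $y=f^N(x)$, let $\tilde h\le\tilde h_0$ be the perturbation produced by Proposition \ref{thm:thetwist_expcircle} at $y$, and choose $h$ so that $f^N(x+h)=y+\tilde h$; Lemma \ref{lm:bdistort} then yields $h\asymp \tilde h/\dfxx{N}{x}$ and, by construction, $h<\hat h$.

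Transferring the lower bound from $(y,y+\tilde h)$ back to $(x,x+h)$ uses the iteration of \eqref{eq:thetaTwistCohom}, which gives
\begin{equation*}
\alpha(x+h)-\alpha(x)=\frac{\alpha(y+\tilde h)-\alpha(y)}{\prod_{i=0}^{N-1}(f'(f^i(x)))^\theta}+R(x,h),
\end{equation*}
where $R(x,h)$ collects the partial sums of \eqref{eq:cohomSolFormula0} truncated at index $N-1$, together with derivative-mismatch errors of the form $[(f'(f^i(x)))^\theta-(f'(f^i(x+h)))^\theta]\alpha(\cdot)$, all controlled by Lemma \ref{lm:bdistort} and the $C^{1+\eps}$ regularity of $v$. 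Using $h\asymp \tilde h/\dfxx{N}{x}$, the main term has absolute value $\asymp C_0'\tilde h^\theta/\dfxx{N}{x}^\theta\asymp C_0' h^\theta$. The main obstacle, and the heart of the argument, is the remainder bound $\vmod{R(x,h)}\leq \tfrac12 C_0 h^\theta$; but this is exactly the type of non-resonant estimate already carried out in the proof of Proposition \ref{thm:thetwist_expcircle} under condition (A) and the pinching assumption, so no genuinely new estimate is needed --- only careful bookkeeping to ensure that all constants are uniform over the whole set $f^{-N}(B_{\delta_2}(c))$ rather than only over a full-measure subset.
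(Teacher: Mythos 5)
Your opening sentence is exactly right, and is in fact what the paper does: the proof of Proposition \ref{thm:thetwist_expcircle} already runs uniformly over any $x$ with $f^N(x)\in B_{\delta_2}(c)$ and $N$ large, and the paper explicitly offers the reader the two quantifier orderings (``One can take $x$ to be a point from $N$'th-preimage of $B_{\delta_2}(c)$ \dots\ or any point with a dense trajectory''). In particular $\delta_1,\delta_2,C_0$ are already independent of $x$ and $N$ in the bound \eqref{neq:lbound_nlin_large}, so fixing $N$ first (large enough that $\delta_2/(\Cbdistort\lambda^N)<\hat h$ and the $\Csecest(s)Nh^{1-\theta}$ tail is tamed by pinching) and then letting $x$ range over $f^{-N}(B_{\delta_2}(c))$ gives the statement at once. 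So the conclusion is correct, the constants are the ones already in hand, and no genuinely new estimate is needed.

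The ``core mechanism'' you describe, however, is not the paper's route and has a gap as stated. You propose to first obtain, for $y\in B_{\delta_2}(c)$ and some base scale $\tilde h$, a lower bound $\vmod{\alpha(y+\tilde h)-\alpha(y)}\geq C_0'\tilde h^\theta$, and then pull this back through the iterated cohomological identity. Two problems. First, Proposition \ref{thm:thetwist_expcircle} is not a quantitative black box at $y$ for a prescribed $\tilde h$: it only asserts existence of \emph{some} $h<\hat h$ for a.e.\ $y$, with no uniform lower bound on admissible $h$ over a range of $y$'s, so ``the largest admissible base scale $\tilde h_0$ supplied by that proposition near $c$'' is not a defined quantity. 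Second, the fixed-$y$ lower bound $\vmod{\alpha(y+\tilde h)-\alpha(y)}\gtrsim \tilde h^\theta$ for all small $\tilde h$ is essentially the thing one is trying to prove; for small $\tilde h$ the leading term of the series at $y$ gives only order $\tilde h$, and there is no obvious control preventing cancellation with the tail. The mechanism that actually makes the paper's estimate work is different: the dominant contribution to $\alpha(x)-\alpha(x+h)$ comes from the single index $i=N$ in the sum, where the perturbation $f^N(x+h)-f^N(x)$ has been amplified to a \emph{fixed} size $\asymp\delta_2$ (not small), so $v'(c)\neq 0$ gives a definite sign, and the earlier and later indices are shown to be subordinate by the three technical lemmas; there is no genuine ``bound at $y$'' being transported. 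If you replace the black-box appeal to Proposition \ref{thm:thetwist_expcircle} at $y$ by a direct estimate of the $i=N$ term at the fixed scale $\delta_2$, your pullback picture collapses to the paper's argument; as currently written, the appeal is circular and the key step is missing.
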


\begin{rem}
    For points $x$ that are preimages of $c$
    we can replace
    $\Cdv$ in condition (A) by 
    the maximum of local $\eps$-\holder constants for $v'$
    over $0\leq j \leq \powk -1$ at points $f^{j}(c)$.
\end{rem}

\begin{rem}
    Here is an explanation why condition (A) is needed.
    In the linear case $f = E_\lambda $     
    (this case is simpler than the general one) during the proof
    we will use decomposition 
    \begin{gather*}
       \alpha(x)-\alpha(x+h)  = \sum_{j=0}^{\powk-1} B_j(x,h)
    \end{gather*}
    and prove that 
    for every $0<\delta^{(j)}_1,\delta^{(j)}_2<\maxdf^{-j}$ 
    for some $x$
    for each $j$ there exists positive
    $h_j$ depending in an explicit way on $\delta^{(j)}_1,\delta^{(j)}_2$ such that
    \begin{gather*}
        B_j(x,h_j) h_j^{-\theta} \geq
        K(j,\delta^{(j)}_1,\delta^{(j)}_2),
        \end{gather*}
        where $K(\cdot,\cdot,\cdot)$ is
        an explicit expression (see formula \eqref{neq:final_lower_bd_nonlin}).
        To have a \emph{positive} lower bound for all $B_j(x,h)$, $0\leq j\leq \powk -1$
        for \emph{the same $h$}
        we tune $\delta^{(j)}_1,\delta^{(j)}_2$ 
        in such a way so that there exist $\delta_1,\delta_2$
        such that
        \begin{equation*}
            K(j,\delta_1,\delta_2) > 0,\quad 0\leq j\leq \powk-1
            .
        \end{equation*}
        To be able to perform this tuning, the condition (A) is required.

%        Note that despite condition (A) does not seem to be optimal, 
%        the theorem is false without it
%        as 
%        taking $\phi$ to be an $\omega$-\holder
%        function one can use the 
%        formula for equation \eqref{eq:thetaTwistCohom}
%        to \emph{define} $v$ out of $\phi$
%    \begin{equation*}
%        v(x) = \alpha(\phi(x)) - (f'(x))^{\theta} \phi(x).
%    \end{equation*}
%        As equation \eqref{eq:thetaTwistCohom} has only one bounded solution,
%        this implies that this solution is equal to $\phi$ and is 
%        therefore $\omega$-\holder. As $\omega$ can be taken arbitrarily
%        it shows that the statement of Proposition \eqref{thm:thetwist_expcircle}
%        does not hold.
    \end{rem}

\begin{rem}
    It is possible to replace condition 
        that all $v'(f^j(c))$ have the same sign
        by a more complicated-looking condition meaning that 
        the sum of lower bounds for $B_j(x,h)$ for positive  $v'(f^j(c))$
        minus sum of upper bounds for $B_j(x,h)$ for non-positive ones
        is positive.
\end{rem}

\begin{theorem}  \label{cor:powk1}
     Suppose $f$ is pinching,
 $v\in C^{2+\eps}(\mfd)$ and is not constant.
 %, and the following conditions are satisfied
%    \begin{gather*}
%        %G
%        \enbrace{
%        \frac{\maxdf}
%        {\lambda^{2} } }^\theta
%        \leq 
%        \frac
%        {(\lfth) \maxdv^{2}} 
%        {36 \Cbdistort\maxv \Cdv  } , 
%        %\label{neq:simple_bd1}
%        \\
%        %%%%%%%%%%%%%%%%%%%%%%%%%%%%%
%        \frac{\maxdf^\theta}
%        {\lambda }
%         \leq
%        \frac
%        {  \lfoth  }  
%        {4  \Cbdistort^{2} } , 
%        %\label{neq:simple_bd2}
%        \\
%        %%%%%%%%%%%%%%%%%%%%%%%%%%%%%%
%        \frac{\maxdv}{6\Cdv} \leq 1
%        %\label{neq:simple_bd3} 
%    \end{gather*}

    Let $c$ be the point where the maximum
    of $v'$ is achieved.
    Let $\Gamma$ is the local $\eps$-\holder constant
    of $v''$ at point $c$.
    Suppose
    \begin{gather}
            \frac{   \maxdf^\theta \Cbdistort^{2} }
            { \lambda (\lfoth)  }   
            \leq \frac{1}{4},  
            \label{neq:simple_bd1}
            \\
         {\maxdv} \leq {9\Gamma}.
            \label{neq:simple_bd2}
    \end{gather}
     then
     for almost every $x$
    the solution $\alpha$ of equation \eqref{eq:thetaTwistCohom}
    for $r=1$
     is not $(\theta+\gamma)$-\holder
     at $x$ for every $\gamma>0$.
\end{theorem}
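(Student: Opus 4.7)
The plan is to derive Theorem \ref{cor:powk1} as a special case of Proposition \ref{thm:thetwist_expcircle}(\ref{enum:thmpart_nonhol}) with $\powk = 1$, applied at the point $c$ where $v'$ attains its maximum. Since $v$ is not constant and $\int_{\mfd} v'=0$, we have $\max v' > 0$, hence $v'(c)>0$; after possibly replacing $v$ by $-v$ (which negates $\alpha$ and preserves \holder properties) we may assume $v'(c) = \maxdv$. Because only $j=0$ occurs when $\powk=1$, the sign requirement of condition (A) is automatic and $\mindvcj = v'(c) = \maxdv$.

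The crucial smoothness input is that $c$ is an interior maximum of $v'$ on $\mfd$, so $v''(c) = 0$. Combined with the local $\eps$-\holder constant $\Gamma$ of $v''$ at $c$, this gives $|v''(c+s)| \leq \Gamma s^\eps$ for $0 < s < 1$, and integrating yields
\[
|v'(c+h) - v'(c)| \leq \frac{\Gamma h^{1+\eps}}{1+\eps},\qquad 0 < h < 1.
\]
Thus the local $\eps$-\holder constant of $v'$ at $c$ is bounded by $\Gamma/(1+\eps)$, and moreover $\max|v''| \leq \Gamma$ (by applying the same Hölder bound around $c$ along $\mfd$), so $\Cdv \leq \Gamma$ globally. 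This is the one place where the $C^{2+\eps}$ hypothesis is used in an essential way.

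Now I would verify the three inequalities \eqref{neq:A_bd1}--\eqref{neq:A_bd3} of Condition (A) for $\powk = 1$. Inequality \eqref{neq:A_bd2} for $j=0$ reads $\frac{\maxdv\Cbdistort^{2}\maxdf^\theta}{(\lfoth)\lambda} \leq v'(c)/4$, which follows from \eqref{neq:simple_bd1} after multiplying by $\maxdv$ and using $v'(c) = \maxdv$. Inequality \eqref{neq:A_bd3} reduces, for $\powk=1$, to $\maxdv \leq 6\Cdv$; using the remark following Proposition \ref{thm:thetwist_expcircle} we may replace $\Cdv$ by the local $\eps$-\holder constant of $v'$ at $c$, and \eqref{neq:simple_bd2} combined with our Taylor bound gives the inequality, with the numerical factor $9$ absorbing the loss from the $1/(1+\eps)$ in the Taylor estimate. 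Finally, \eqref{neq:A_bd1} contains the factor $\maxdf^\theta/\lambda^{2\theta} = \pinconst^\theta$, which is strictly less than $1$ by pinching; combined with \eqref{neq:simple_bd1} (which forces $\lambda$ to be large and thus $\pinconst$ small), the control on the ratio $\Cdv/\mindvcj$ coming from \eqref{neq:simple_bd2}, and the comparability $\Cdv \leq \Gamma \lesssim \maxdv$, the left side of \eqref{neq:A_bd1} is driven below $1$.

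The main obstacle is this last arithmetic step for \eqref{neq:A_bd1}: the ratio $\Cdv^{1/\eps}/\mindvcj^{1+1/\eps}$ enters to the high power $1/\eps$ and must be balanced against $\pinconst^\theta$ and the various $\Cbdistort$, $\maxv$ factors. Tracking these constants is routine but tedious once the Taylor estimate and pinching provide enough room. Once Condition (A) is verified, Proposition \ref{thm:thetwist_expcircle}(\ref{enum:thmpart_nonhol}) supplies, for almost every $x \in \mfd$, arbitrarily small $h$ with $|\alpha(x)-\alpha(x+h)| \geq C_0 h^\theta$, which precludes $(\theta+\gamma)$-\holder continuity of $\alpha$ at $x$ for every $\gamma > 0$, completing the proof.
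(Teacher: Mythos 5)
Your proposal aims to verify Condition~(A) for $\powk=1$ from the hypotheses \eqref{neq:simple_bd1}--\eqref{neq:simple_bd2}, then invoke Proposition~\ref{thm:thetwist_expcircle}. This is not what the paper does, and the route you propose breaks down at inequality \eqref{neq:A_bd3}. For $\powk=1$, \eqref{neq:A_bd3} reads $\maxdv \leq 6\Cdv$, so it asks $\Cdv$ to be \emph{large} relative to $\maxdv$. But from $v\in C^{2+\eps}$ you correctly derived the opposite kind of bound, $\Cdv \leq \Gamma$ (and the local $\eps$-\holder constant of $v'$ at $c$ is even smaller, of order $\Gamma/(1+\eps)$ because $v''(c)=0$); combined with \eqref{neq:simple_bd2} ($\maxdv\leq 9\Gamma$) this gives $\maxdv\leq 9\Gamma$ and $\Cdv\leq\Gamma$, from which $\maxdv\leq 6\Cdv$ simply does not follow. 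Worse, using the remark to replace $\Cdv$ by the small local constant at $c$ makes \eqref{neq:A_bd3} \emph{harder} to satisfy, not easier; the ``numerical factor $9$ absorbing the loss'' goes the wrong way. Your argument for \eqref{neq:A_bd1} is likewise unsubstantiated: the hypotheses do not provide a quantitative bound on $\pinconst$ sufficient to control the exponent-$1/\eps$ terms, and since Condition~(A) is scale-invariant under $v\mapsto tv$, no rescaling can rescue it.

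The paper takes a genuinely different route. Rather than verify Condition~(A), it goes back inside the proof of Proposition~\ref{thm:thetwist_expcircle} and \emph{modifies Lemma~\ref{lm:nlin_lower_bd}} using $v''(c)=0$: the mean-value-theorem plus the $\eps$-\holder bound on $v''$ yields $\vmod{v'(f^N(\xi_N))-v'(f^N(x))}\lesssim \Cdvtmp\delta_2^{1+\eps}$, i.e.\ an extra power of $\delta_2$ compared with the generic $\Cdvtmp\delta_2^{\eps}$. This lets the paper set $\delta_2 = (v'(c)/(9\Gamma))^{1/(1+\eps)}$ (exponent $1/(1+\eps)$, not $1/\eps$), for which \eqref{neq:simple_bd2} directly gives $\delta_2\leq 1$ and \eqref{neq:simple_bd1} gives the needed bound on the third error term. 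The remaining requirement $\delta_1\leq\delta_2$ is secured by a scaling trick: the ratio $\delta_1/\delta_2$ scales like $t^{\eps/(1+\eps)}$ under $v\mapsto tv$ (it is not scale-invariant, precisely because of the new exponent), while \holder exponents of $\alpha$ are unchanged, so one may shrink $t$. None of this is reached by a straight application of Proposition~\ref{thm:thetwist_expcircle}. The key idea you are missing is the improved Taylor estimate with exponent $1+\eps$, which is the whole reason the theorem asks for $v\in C^{2+\eps}$ and for $c$ to be a critical point of $v'$.
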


%\begin{proof}
%    Put $c$ to be a point where the maximum of $v'$ is attained.
%    Then $(f,v)$ satisfy condition (A) for $c$ and $\powk = 1$
%    and Proposition \ref{thm:thetwist_expcircle} holds.
%\end{proof}

\begin{rem}  \label{rem:Hardy}
    A relation to classical Weierestrass function
    \eqref{eq:class_weier}
    was already mentioned in Remark \ref{rem:Hardy0}.

    If one puts
        $\theta = - \log a / \log b$,
        $r = 1$,
        $f(x) = E_b(x)$, 
        $v(x) = \cos (2\pi x)$, 
    then
        $\Cbdistort = 1$, 
        $\eps = 1$, \\
         $\maxdv = v'(1/4) = 2\pi$, 
        $\Gamma = (2\pi)^3$
    and
    the bounds \eqref{neq:simple_bd1} and
    \eqref{neq:simple_bd2}
    from Theorem \ref{cor:powk1}
    transform into
    \begin{gather*}
        \frac{ 1  }
        {  \lfboth  }  
        {b^{-(1-\theta)} }
         \leq  1/4 ,
        %%%%%%%%%%%%%%%%%%%%%%%%%%%%%%%%%%%%%%%%%%%
        \\
        2\pi \leq 9(2\pi)^3
        .
    \end{gather*}
    Therefore
    Proposition \ref{thm:thetwist_expcircle}
    imply an 
    ``almost every''-version of 
    Hardy's result
    for the case
     $   b \geq 
        5^{1/(1-\theta) }$.
\end{rem}

%\section{Proofs}

Now we prove a lemma stated before:
\begin{lmnon} [Lemma \ref{lm:exist_unique}]
    For every natural $\prexp \geq 1$
    there exists  only one bounded solution 
    $\alpha:\mfd \to \Rb$
    to equation \eqref{eq:thetaTwistCohom}.
    This solution is given by the following formula:
    \begin{equation}      \label{eq:cohomSolFormula} 
        \alpha(x) = -\sum_{i=0}^{\infty}
        \frac{v( E_r( f^i(x) ) ) }{ \dfxth{i+1} },\quad x\in\mfd  . 
    \end{equation}               
\end{lmnon}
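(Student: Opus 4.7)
The plan is standard for this kind of functional equation. First I would establish that the series \eqref{eq:cohomSolFormula} converges absolutely and uniformly on $\mfd$. Since $f'(y)\geq \lambda > 1$ everywhere, the chain rule gives $\dfx{i+1}\geq \lambda^{i+1}$, so the $i$-th summand is bounded in absolute value by $\maxv / \lambda^{(i+1)\theta}$, which is geometrically summable (recall $\theta > 0$). This both proves convergence and provides an explicit bound $|\alpha(x)| \leq \maxv/(\lambda^{\theta}-1)$, so $\alpha$ is bounded (and continuous as a uniform limit of continuous functions).

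Second I would verify that this $\alpha$ satisfies \eqref{eq:thetaTwistCohom}. The computation is straightforward: using $(f^{i+1})'(x) = (f^i)'(f(x))\cdot f'(x)$, one gets
\begin{equation*}
    (f'(x))^\theta \alpha(x) = -\sum_{i=0}^{\infty} \frac{v(E_r(f^i(x)))}{((f^i)'(f(x)))^\theta}
    = -v(E_r(x)) + \alpha(f(x)),
\end{equation*}
by splitting off the $i=0$ term (whose denominator is $1$) and reindexing the remaining sum. Rearranging yields \eqref{eq:thetaTwistCohom}.

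Third, for uniqueness, I would use the expanding property. If $\alpha_1,\alpha_2$ are two bounded solutions, their difference $\beta := \alpha_1-\alpha_2$ satisfies the homogeneous equation $\beta(f(x)) = (f'(x))^\theta \beta(x)$. Iterating $n$ times and using the chain rule gives $\beta(f^n(x)) = ((f^n)'(x))^\theta\, \beta(x)$, hence $|\beta(x)| \leq \|\beta\|_\infty / \lambda^{n\theta}$ for every $n$, forcing $\beta\equiv 0$.

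None of the steps is really an obstacle — the only point requiring any care is the bookkeeping in the index shift in step two, where one must be consistent about which version of the chain rule ($f\circ f^i$ vs.\ $f^i\circ f$) is used so that the $i=0$ term cleanly produces $v(E_r(x))$. The argument does not use the specific form of $E_r$ beyond continuity and boundedness of $v\circ E_r$, which is why the same proof works uniformly in $\prexp\geq 1$.
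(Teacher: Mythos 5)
Your proof is correct and rests on the same mechanism as the paper's: iterate the cohomological equation $N$ times and exploit the expansion $(f^N)'(x)\geq\lambda^N$ to kill the remainder. The only cosmetic difference is in the uniqueness step — you pass to the difference $\beta=\alpha_1-\alpha_2$ and use the homogeneous equation $\beta(f^n(x))=((f^n)'(x))^\theta\beta(x)$ directly, whereas the paper keeps both solutions, writes the finite telescoping identity for each, and derives a contradiction with a carefully chosen $N$; these are the same identity and the same estimate, just organized differently.
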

\begin{proof}%[Proof of Lemma \ref{lm:exist_unique}]
    Fix $x\in \mfd$.
    It is easy to see that series \eqref{eq:cohomSolFormula}
    gives a bounded solution to equation \eqref{eq:thetaTwistCohom}.

    To prove that it is the only bounded solution, suppose there is another 
    bounded solution $\beta$.
    %We do the proof for $\prexp = 1$ but in general case the changes are minimal.
    For $K = \max(\sup \vmod{\alpha}, \sup \vmod{\beta})$
    take $N\in\Nb$ so that 
    \begin{equation*}
        \frac{K}{\dfxth{N}} < \vmod{\beta(x) - \alpha(x) } / 3 .
    \end{equation*}

    Note that for $\alpha$
    we can write a finite analog of solution forumla \eqref{eq:cohomSolFormula}:
    \begin{gather*}
        \alpha(x) = \frac{-v(E_r(x)) }{ \dfxthxp{x} }  +
        \frac{\alpha(f(x)) }{ \dfxthxp{x}}
        =      \nonumber
        \\
        =\frac{-v(E_r(x)) }{ \dfxthxp{x} }  
        + \frac{1 }{ \dfxthxp{x} } 
        \enbrace{\frac{-v(E_r(f(x))) }{ \dfxthxp{f(x)} }  +
        \frac{\alpha(f^2(x)) }{ \dfxthxp{f(x)} } } = 
        \nonumber
        \\
        =\frac{-v(E_r(x)) }{ \dfxthxp{x}  }  -  
        \frac{v(E_r(f(x))) }{ \dfxth{2}  }  +
        \frac{\alpha(f^2(x)) }{\dfxth{2}}  = \ldots =  
        \nonumber
        \\
        =-\sum_{i=0}^{N-1} \frac{v(E_r(f^i(x))) }{ \dfxth{i+1} } +
        \frac{\alpha(f^N(x)) }{ \dfxth{N} }.
     %   \label{eq:finite_alpha_rep}
    \end{gather*}

    Analogously 
    \begin{gather*}
%        \alpha(x) = -\sum_{i=0}^{N-1} \frac{v(f^i(x)) }{ \dfxth{i+1} } +
%        \frac{\alpha(f^N(x)) }{ \dfxth{N} }; \\
%%
        \beta(x) = -\sum_{i=0}^{N-1} \frac{v(E_r(f^i(x))) }{ \dfxth{i+1} } +
        \frac{\beta(f^N(x)) }{ \dfxth{N} }.
    \end{gather*}
    Then 
    \begin{equation*}
        \frac{3K}{ \dfxth{N}} < \vmod{\alpha(x) - \beta(x) }= 
        \vmod{ \frac{  \alpha(f^N(x)) - \beta(f^N(x)) }
        { \dfxth{N} } } \leq \frac{2K}{  \dfxth{N} },
    \end{equation*}
    which is  a contradiction.
    %because $\vmod{\alpha(x) - \beta(x)} \leq 2K / \dfxth{N}$.

\end{proof}

\begin{proof}[Proof of Proposition \ref{thm:thetwist_expcircle} ]

    Let $\alpha$ be the only bounded solution to equation  
    \eqref{eq:thetaTwistCohom}.

    We show the proof first for linear $f$
    to give ideas
    and then for general (nonlinear) $f$.
    In each case we first prove an upper bound for
    $\vmod{\alpha(x)-\alpha(x+h)}h^{-\theta}$
    for every $f$, 
    then a lower bound for the case when
    $(f,v)$ satisfies condition (A) for power $\powk = 1$.
    
    The proof of the lower bound for the case of general 
    $\powk$ is put in the Appendix.
    
    Both for linear and nonlinear case
    we prove several lemmas with estimates
    and then we use them in different combinations 
    to study \holder continuity properties of $\alpha$.

    We prove upper bounds only for the case $r=1$ but 
    the case of general natural $r\geq 1$ 
    follows immediately replacing $v$ by $v\circ E_r$.
    %prove $\theta$-\holder continuity and a lower bound.

    Lemmas that will follow are meant 
    to be inside the proof of the theorem so
    they inherit notations and 
    assumptions made during the proof before
    they are stated.

    Recall that there exists a measure 
    with a positive density with
    respect to Lebesgue measure on the 
    circle that is ergodic for 
    $f$ (see \cite{MANE_ERGTH} for example). Therefore
    almost every point $x\in\mfd$ has a dense orbit.

    \subsection{Proof of Proposition \ref{thm:thetwist_expcircle} in the linear case}

Suppose first that $f(x) = E_\lambda(x) $, 
for natural $\lambda > 1$.

    For every 
    $x$ from $S^1$ and $h>0$
    consider the following decomposition:
    \begin{gather}
        %B(x,h) = 
        \alpha(x) - \alpha(x+h)
        = 
         -\sum_{i=0}^{\infty}
         \frac{v(f^{ i}(x)) }{ \lambda^{\theta ( i+1)} } 
        +\sum_{i=0}^{\infty}
        \frac{v(f^{ i}(x+h)) }{ \lambda^{\theta (  i+1)} }  = 
        \nonumber
        \\
        %%%%%%
        = \sum_{i=0}^{\infty}
        \frac{1}{\lambda^{\theta ( i+1)} }
        \enbrace{v(f^{ i}(x+h)) -v(f^{ i}(x))}   .
    \label{eq:alphadif_lin_largelbd} 
    \end{gather}

    Denote by $\Zplus$ the set of nonnegative integers and
    $\Zplusinf = \Zplus \cup \{ \infty \}$.

    For $n\in \Zplus$ and $m \in \Zplusinf$ such that $n\leq m$
    we introduce the following notation:
    \begin{equation*}
        S_{n,m}(x,h) = 
 \sum_{i=n}^{m} \frac{1}{\lambda^{(   i+1)	\theta} }
 \enbrace{v(f^{ i }(x+h)) -v(f^{ i}(x))}     .
    \end{equation*}

    Note that for every $n\in\Zplus$ there
    exist $\xi_i\in (x,x+h)$ for $0\leq i\leq n$
    such that
    \begin{gather*}
        S_{0,n}(x,h) = \frac{h}{\lambda^{\theta}} \sum_{i=0}^n 
        v'( f^i(\xi_i) )   \lambda^{ i(1-\theta)}
    \end{gather*}

    Next we will write estimates of  \eqref{eq:alphadif_lin_largelbd}
    from above and from below with different quantifiers for $x$ and $h$.

    \subsubsection{Technical lemmas}

    \begin{lm}                         \label{lm:lin_supp_bounds}

    Let $N$ be a natural number,  
    $0<\delta_1<1$
    and $h>0$ be such that
    $  \delta_1 \leq h{ \lambda^{ N} } $.

    Then 
    for every $x\in\mfd$ the following estimate holds:
\begin{gather*}
    \vmod{S_{N+1,\infty}(x,h) }
  \leq
  \frac{ 2 \maxv }{ (\lfth) \lambda^{2\theta} \delta_1^{\theta} }
        h^\theta  
        .
  \end{gather*}

\end{lm}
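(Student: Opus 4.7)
The statement is a tail bound for the geometric-type series defining $S_{N+1,\infty}(x,h)$, and the plan is to prove it by a completely elementary estimate that discards all cancellation between the two $v$-values and uses only the trivial supremum bound on $v$ together with the hypothesis $\delta_1 \leq h\lambda^N$.

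First I would apply the bound $\lvert v(f^i(x+h)) - v(f^i(x)) \rvert \leq 2\maxv$ term by term, which reduces the question to estimating the geometric tail
\begin{equation*}
    \lvert S_{N+1,\infty}(x,h) \rvert \leq 2\maxv \sum_{i=N+1}^{\infty} \lambda^{-(i+1)\theta}.
\end{equation*}
Summing this geometric series gives the factor $\lambda^{-(N+2)\theta}/(\lfth)$, so
\begin{equation*}
    \lvert S_{N+1,\infty}(x,h) \rvert \leq \frac{2\maxv}{(\lfth)\lambda^{2\theta}} \cdot \lambda^{-N\theta}.
\end{equation*}

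The final step is to trade the $\lambda^{-N\theta}$ factor for $h^\theta$ at the cost of a factor $\delta_1^{-\theta}$, using the hypothesis $\delta_1 \leq h\lambda^N$: raising both sides to the power $\theta$ yields $\lambda^{-N\theta} \leq h^\theta \delta_1^{-\theta}$, and substituting this in produces exactly the claimed bound. There is no genuine obstacle here; the estimate is a routine supremum-plus-geometric-sum argument, and its point is merely to record a clean version of the tail bound that can be combined in the next lemmas with a finer (Taylor-based) estimate of the initial block $S_{0,N}(x,h)$ where cancellation is genuinely used.
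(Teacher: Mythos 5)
Your proof is correct and follows exactly the same route as the paper's: bound each summand by $2\maxv$, sum the geometric tail to get the factor $\lambda^{-(N+2)\theta}/(\lfth)$, and convert $\lambda^{-N\theta}$ into $h^\theta\delta_1^{-\theta}$ via the hypothesis $\delta_1\le h\lambda^N$. There is nothing to add.
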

\begin{proof}[Proof of the lemma]
    We may easily estimate the tail
    of ${S_{0,\infty}}$ using the lower bound on $h$:
    \begin{gather*}
        \vmod{S_{N+1,\infty}(x,h) }  \leq
        2\maxv \frac{1}{\lambda^{(N+2 )\theta}}
        \sum_{i=0}^{\infty}  \lambda^{-i \theta} 
        \leq
        \frac{2\maxv}{(1 - \lambda^{-\theta}) \lambda^{(N+2) \theta} }
        \leq  % use big lambda
        \frac{ 2 \maxv }{ (\lfth) \lambda^{2\theta} \delta_1^{\theta} } h^\theta .
    \end{gather*}

\end{proof}

    \begin{lm} \label{lm:lin_lower_bd}
    Let $N$ be a natural number,
    $0<\delta_1,\delta_2\leq 1$
    and $h>0$ be such that
    $    { \delta_1} 
    \leq h{ \lambda^{ N} } \leq
        { \delta_2 }  $
        .
    Then
    for every $x\in\mfd$ the following estimate holds:
    \begin{gather*}
        \vmod{ S_{0,N}(x,h) } 
        \leq
        \frac{2 \maxvprime}{\lfoth}
            \delta_2^{1-\theta}
         \lambda^{ - \theta}
        h^{\theta}
        .
    \end{gather*}

    If $x,c\in\mfd$ are 
    such that
    $ \dist(f^{  N}(x), c ) \leq \delta_2$,
    and 
    $ v'(c) > 0$
    then
    \begin{gather*}
        h^{-\theta}
        \frac{\lambda^{\theta} }{ \delta_1^{1-\theta}} 
         \vmod{ S_{0,N}(x,h) }
        \geq
            {v'(c)} 
            -
 2\Cdvtmp \delta_2^\eps   
        -
        \frac{2\maxvprime}{ (\lfoth) \lambda^{1-\theta} } 
        ,
    \end{gather*}
    where $\Cdvtmp$ is a local $\eps$-\holder constant for $v'$ at point $f^N(x)$.
\end{lm}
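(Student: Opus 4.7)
The plan is to exploit the fact that in the linear case $f=E_\lambda$ one has the exact identity $f^i(x+h)-f^i(x)=\lambda^i h$, so by the mean value theorem there exist points $\xi_i$ between $f^i(x)$ and $f^i(x+h)$ with $v(f^i(x+h))-v(f^i(x))=v'(\xi_i)\lambda^i h$. Substituting into the definition of $S_{0,N}$ collapses it to the clean geometric sum
\[
S_{0,N}(x,h)=h\lambda^{-\theta}\sum_{i=0}^{N}v'(\xi_i)\,\lambda^{i(1-\theta)},
\]
and both bounds then reduce to estimating this sum carefully at its top index $i=N$, where the geometric factor is the largest.

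For the upper bound I would simply plug in $|v'(\xi_i)|\le\maxvprime$ for every $i$, sum the geometric series $\sum_{i=0}^N \lambda^{i(1-\theta)}\le\lambda^{(N+1)(1-\theta)}/(\lambda^{1-\theta}-1)$, and invoke the upper hypothesis $h\lambda^N\le\delta_2$ to obtain $\lambda^{N(1-\theta)}\le(\delta_2/h)^{1-\theta}$. Regrouping the constant $1/(\lambda^{1-\theta}-1)=\lambda^{\theta-1}/(\lfoth)$ and absorbing the slack into the stated factor $2/(\lfoth)$ yields the desired $\delta_2^{1-\theta}\lambda^{-\theta}h^\theta$ bound.

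For the lower bound I would isolate the $i=N$ term, writing $S_{0,N}=h\lambda^{-\theta}v'(\xi_N)\lambda^{N(1-\theta)}+S_{0,N-1}$. Since $|\xi_N-f^N(x)|\le\lambda^N h\le\delta_2$ and $|c-f^N(x)|\le\delta_2$ by hypothesis, two applications of the local $\eps$-H\"older bound for $v'$ at the point $f^N(x)$, with constant $\Cdvtmp$, yield $v'(\xi_N)\ge v'(c)-2\Cdvtmp\delta_2^\eps$. For the remainder $S_{0,N-1}$ I would dominate $|v'(\xi_i)|$ by $\maxvprime$ and sum the geometric tail, which being comparable to the $i=N-1$ term is exactly $\lambda^{-(1-\theta)}$ smaller than the dominant term, producing the correction $\maxvprime/(\lambda^{1-\theta}-1)$. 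Combining and then using the lower hypothesis $h\lambda^N\ge\delta_1$ to replace $h^{1-\theta}\lambda^{N(1-\theta)}=(h\lambda^N)^{1-\theta}$ by $\delta_1^{1-\theta}$ gives the claimed inequality after the algebraic identity $1/(\lambda^{1-\theta}-1)=1/((\lfoth)\lambda^{1-\theta})$.

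The main subtlety, rather than any hard estimate, is that $\Cdvtmp$ is the local H\"older constant at $f^N(x)$ and not at $c$, so I must route the comparison $v'(\xi_N)\leftrightarrow v'(c)$ through the intermediate value $v'(f^N(x))$ and pay the factor of two; everything else is a straightforward geometric-series bookkeeping. The only place where one must be careful about signs is that the lower bound is usable only when the right-hand side is positive, but the statement itself is consistent with this (it is merely a lower bound on an absolute value), and sign positivity of the dominant term $v'(\xi_N)$ for small enough $\delta_2$ follows from $v'(c)>0$ by the same H\"older estimate.
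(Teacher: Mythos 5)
Your argument is correct and follows essentially the same path as the paper's own proof: both reduce $S_{0,N}$ via the mean value theorem to the geometric sum $h\lambda^{-\theta}\sum_{i=0}^{N} v'(\xi_i)\lambda^{i(1-\theta)}$, obtain the upper bound from the geometric series together with $h\lambda^{N}\leq\delta_2$, and obtain the lower bound by isolating the dominant $i=N$ term (passing from $v'(\xi_N)$ to $v'(c)$ via the intermediate point $f^N(x)$ with two local $\eps$-H\"older steps) while bounding the remaining sum by $\Gamma_1/(\lambda^{1-\theta}-1)$. Your constants are in fact slightly sharper than those stated, the paper's factor of $2$ being harmless slack, and the sign caveat you flag is handled correctly since the inequality is vacuous when the right-hand side is negative.
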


\begin{proof}[Proof of the lemma] 
    First estimate from the statement follows from the upper bound on $h$:
    \begin{gather*}
        h^{-\theta}
        \frac{\lambda^{\theta} }
        { \delta_2^{1-\theta} }
        \vmod{ S_{0,N}(x,h) }
        =
        h^{1-\theta} 
        \frac{\lambda^{\theta} }
        { \delta_2^{1-\theta} }
        \vmod{
             \frac{1}{\lambda^\theta} \sum_{i=0}^n 
        v'( f^i(\xi_i) )   \lambda^{i(1-\theta)} 
             } \leq 
            %%%%%%%%%%%%% 2%%%%%%%%%%%%%%%%%%%%%%%%
        \\
        \leq
        \sum_{i=0}^{N} \vmod{ v'(f^{ i}(\xi_i)) }
            \frac{  1 }
            {  \lambda^{(N-i )(1-\theta)}  }     
            \leq 
            %%%%%%%%%%%%%%%%%%%%%%%%%%%%%%%%%%%%%
            \frac{2  \maxvprime}{ \lfoth    } 
            .
    \end{gather*}

        Note that we have for certain $\xi\in (x,\xi_N) \subset (x,x+h)$
    \begin{gather*}
        \vmod{ v'(f^{  N}(\xi_N)) - 
        v'(f^{ N}(x)) }
        \leq  
        \Cdvtmp \vmod{ f^{  N}(\xi_N) - f^{  N}(x) }^\eps
        =
        \\
        =
        \Cdvtmp \vmod{ \dfxx{  N}{\xi} (\xi_N - x)}^\eps 
        =
        \Cdvtmp (\lambda^{ N} h)^\eps \leq \Cdvtmp  \delta_2^\eps
        .
    \end{gather*}
    We also have that
       $ \vmod{ v'(f^{ N}(x)  ) - v'(c) } 
        \leq 
        \Cdvtmp \delta_2^\eps$.

    Using the lower bound for $h$, estimate \eqref{eq:vDiff}
    and the condition on $x$
    we can write the following estimates:
    \begin{gather*}
        h^{-\theta}
        \frac{\lambda^{\theta} }{ \delta_1^{1-\theta}} 
        \vmod{ S_{0,N}(x,h) }
        =
        h^{1-\theta} 
        \frac{\lambda^{\theta} }{ \delta_1^{1-\theta}} 
        \vmod{
             \frac{1}{\lambda^\theta} \sum_{i=0}^n 
        v'( f^i(\xi_i) )   \lambda^{  i(1-\theta)} 
             } \geq 
        \\
        \geq
            \vmod{
                \sum_{i=0}^{N} v'(f^{ i}(\xi_i))
            \frac{  1 }
            {  \lambda^{(N- i)(1-\theta)}  } }    
            \geq 
            \\
        \geq
         \vmod{ v'(f^{ N}(\xi_N)) }  
                -
                \sum_{i=0}^{N-1} \vmod{v'(f^{ i}(\xi_i)) }
            \frac{  1 }
            {  \lambda^{ (N-i) (1-\theta)}  }     
            \geq 
            \\
            %%%%%%%%%%%%%%%%%%%%%%%%%%%%%%%%%%%%%
        \geq  
        {v'(f^N(x))} -
        \Cdvtmp \delta_2^\eps
        -
             \frac{2\maxvprime}{(\lfoth)\lambda^{1-\theta} }
             \geq
        {v'(c)} -
        2\Cdvtmp \delta_2^\eps
        -
             \frac{2\maxvprime}{(\lfoth)\lambda^{1-\theta} }
             .
    \end{gather*}

\end{proof}

    \subsubsection{Upper bound}
    First we prove that $\alpha$ is $\theta$-\holder.

    Fix $x\in\mfd$ and
    h such that $\lambda^{-1}/2>h>0$.
    % we define N from h so it is necessary here
    Take a natural $N$ (depending on $h$) such that
    $     1/2  \leq 
    h{ \lambda^{N} } 
    \leq 1     $.

    Then lemmas \ref{lm:lin_supp_bounds}
    and \ref{lm:lin_lower_bd} 
    for 
    $\delta_1 = 1/2$ and  $\delta_2 =1$ %and $N=N_h$
    imply that
    the following upper bound for the normalized absolute value
    of \eqref{eq:alphadif_lin_largelbd} holds:
    \begin{gather*}
        \frac{\vmod{\alpha(x+h) - \alpha(x) } }{h^\theta}
        \leq 
        \frac{ 2 \maxvprime }
        {\lambda^{-\theta}(\lfoth)}
            +
            \frac{4 \maxv }{\lambda^{2\theta} (\lfth) }
            .
    \end{gather*}
    As this bound does not depend on $h$ and $N$, it
    proves that $\alpha$ is $\theta$-\holder.
    
%    Note that here we did not use any condition on $\lambda$
%    so the same conclusion holds for a general $\lambda>1$.
    
    %%%%%%%%%%%%%%%%%%%%%%%%%%%%%%%
    %%%%%%%%%%%%%%%%%%%%%%%%%%%%%%%
    %%%%%%%%%%%%%%%%%%%%%%%%%%%%%%%

    \subsubsection{Lower bound when
       % , large $\lambda$ 
    condition (A) holds for $\powk = 1$}

    Assume that condition (A) is satisfied for 
    a point $c$ and $\powk = 1$. 
    Suppose without restricting generality that $v'(c)>0$.

    Fix $\hat{h}$.

    We first give expressions for $\delta_1,\delta_2$
    that depend only on $f,v$ and $\powk$
    and later
    select $x$, $N$ and $h = h(N,\delta_1,\delta_2) < \hat{h}$
    such that $ \vmod{\alpha(x) - \alpha(x+h) } / h^\theta$ 
    has a positive lower bound.

    For every 
    $0<\delta_1,\delta_2\leq 1$
    for every $x$ for which  
    there exists a natural $N>1$ such that
    $    \vmod{f^{ N}(x) - c} < 
        \delta_2 $,
    the lemmas above imply that
        for every
        $h = h(N)$ such that 
    $    { \delta_1 } \leq h { \lambda^{ N}  } \leq
        { \delta_2 }    $,
    the following lower bound holds 
        \begin{gather}
        \frac{\vmod{\alpha(x+h) - \alpha(x) } }{h^\theta} 
             {\delta_1^{\theta-1}}
            {\lambda^{\theta}  } 
        \geq \nonumber 
        \\
        \geq
            \dvc
            -
            2\Cdv \delta_2^\eps 
        -
             \frac{2\maxvprime}{ (\lfoth) \lambda^{(1-\theta)} } 
             -
        \frac{ 2 \maxv }{ (\lfth)\lambda^{\theta}
        \delta_1 }
        \label{neq:final_lower_bd_lin_large}
        .
        \end{gather}
        Note that the right-hand side of 
        this expression does not depend on $x,N$ and $h$.

    It is possible to choose $\delta_1,\delta_2$ independently of $N,h$ and $x$ 
    in such a way that the expression above is always greater than zero.
    To guarantee that the second and the fourth summands in the right-hand
    side of \eqref{neq:final_lower_bd_lin_large} are both less than
    $\dvc/3$ it is enough to put 
    \begin{gather*}
        \delta_1 = \frac{6 \maxv}{ (\lfth) \dvc   \lambda^{\theta }},\ 
        \delta_2 = \enbrace{ \frac{\dvc}{6\Cdv}}^{{1/\eps}}
        .
    \end{gather*}

    Inequalities \eqref{neq:A_bd1},\eqref{neq:A_bd3} from condition (A)
    imply that $\delta_1\leq \delta_2\leq 1$.

    Now fix a point $x$ such that there exists
    a natural $N$ such that
    \begin{gather*}
            \vmod{f^{ N}(x) - c} < 
            \delta_2; \  
            \delta_2 \lambda^{-N} < \hat{h}
        .
    \end{gather*}
    One can take $x$ to be a point from
    $N$'th-preimage of $B_{\delta_2}(c)$ for sufficiently large $N$ or
    any point with a
    dense trajectory (the set of which has full Lebesgue measure).
    In the latter case $x$ does not depend on $\hat{h}$.

    Now inequality \eqref{neq:A_bd2} from condition (A)
    and the definition of $\delta_1,\delta_2$
    imply that we can select an $h < \hat{h}$ such that 
    \begin{equation*}
        \frac{ \alpha(x) - \alpha(x+h) } { h^\theta}
        \geq  C_0 > 0 , 
    \end{equation*}
    where
    \begin{gather*}
        C_0  = \delta_1^{1-\theta} \lambda^{-\theta} \frac{ \dvc }{12}
        =  \enbrace{ \frac{6 \maxv }{\lfth} }^{1-\theta}
        \frac{1}{12} \lambda^{-2\theta+\theta^2} 
        (\dvc)^\theta     .
    \end{gather*}
    in particular, $C_0$ does not depend on ${h}$.

%    {\bfseries 
%        [ 
%            Here I can choose $\delta_1 = \delta_2/2$, it would just
%            change a bit the inequality in condition (A).
%            Of course my choice of $\delta_1$ and $\delta_2$ is also not optimal.
%            However I think that choosing them separately helps to show better what
%            happens for small $\lambda$ (i.e. when $\powk > 1$).
%        ]
%    }

    %\input{prooflin_small_lbd}
    \subsection{Proof of Proposition \ref{thm:thetwist_expcircle} for general expanding $f$ }
\label{sec:proofnonlin_large_lbd}
    Now consider %nonlinear case, i.e. we 
    %do the proof for a 
    a general $f$. % satisfying the assumptions.
        
      Denote
%          \maxdf = \max_{y\in\mfd} \vmod{\dfxp{y}}, \\
 $         \maxddf = \max_{y\in\mfd} \vmod{\dfxpdb{y}}$.

    For every 
    $x$ from $S^1$ and $h>0$,
    consider the following decomposition:
    \begin{gather}
        \alpha(x) - \alpha(x+h) = \label{eq:alphadif_nonlin_large_lbd} \\
        = -\sum_{i=0}^{\infty}
        \frac{v(f^i(x)) }{ \dfxthx{i+1}{x} } 
        +\sum_{i=0}^{\infty}
        \frac{v(f^i(x+h)) }{ \dfxthx{i+1}{x+h} }  = 
        \nonumber\\
        %%%%%%
        = \sum_{i=0}^{\infty} \frac{1}{\dfxthx{i+1}{x} }
        \enbrace{v(f^i(x+h)) -v(f^i(x))} +
        \label{eq:alphadif1smnd} \\
        %%%%%%%%%
        + \sum_{i=0}^{\infty} v(f^i(x+h))\enbrace{\frac{1}{\dfxthx{i+1}{x+h} }
        - \frac{1}{\dfxthx{i+1}{x} } } .   \label{eq:alphadif2smnd}
    \end{gather}

    \subsubsection{Technical lemmas}
    
%%%%%%%%%%%%%%%%%%%%%%%%%%%%%%%%%%%%%%%%%%%%%%%%%%%%%%%%
%%%%%%%%%%%%%%%%%%%%%%%%%%%%%%%%%%%%%%%%%%%%%%%%%%%%%%%%
%%%%%%%%%%%%%%%%%%%%%%%%%%%%%%%%%%%%%%%%%%%%%%%%%%%%%%%%

We will need two simple formulae

\begin{lm}
    For every $x\in \mfd$ and natural $k_1\geq k_2 \geq 0$ we have
    \begin{gather}
        \frac{ \dfx{k_1} }{ \dfx{k_2} } = 
        {  \dfx{k_1-k_2}( f^{k_2}(x) ) }   \label{eq:dfdivide} .
    \end{gather}
\end{lm}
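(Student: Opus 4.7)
The statement is the standard multiplicative chain rule for iterated maps, written in ratio form. The plan is to write $f^{k_1} = f^{k_1-k_2} \circ f^{k_2}$, apply the chain rule once, and divide.

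First I would observe that by the semigroup property of iteration, $f^{k_1}(x) = f^{k_1-k_2}(f^{k_2}(x))$ for all $k_1 \geq k_2 \geq 0$. Differentiating this identity via the chain rule gives
\begin{equation*}
    \dfx{k_1} = \dfxx{k_1-k_2}{f^{k_2}(x)} \cdot \dfx{k_2}.
\end{equation*}
Since $f'(y) \geq \lambda > 1$ for every $y \in \mfd$, the factor $\dfx{k_2}$, which is a product of $k_2$ such derivatives along the orbit of $x$, is strictly positive (in particular nonzero). Dividing both sides by $\dfx{k_2}$ therefore yields \eqref{eq:dfdivide}.

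The base case $k_2 = 0$ is handled by the convention that $f^0 = \mathrm{id}$, so $\dfx{0} = 1$ and the identity reduces to $\dfx{k_1} = \dfx{k_1}$. No step is really an obstacle here; the only mild care is ensuring $\dfx{k_2} \neq 0$ so that the division is legitimate, which is immediate from the expanding hypothesis $\lambda > 1$. If one prefers a formal argument, induction on $k_1 - k_2$ starting from the chain rule $\dfxp{f^{k_2}(x)} \cdot \dfx{k_2} = (f^{k_2+1})'(x)$ gives the same conclusion without any further subtlety.
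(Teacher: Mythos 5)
Your proof is correct. The paper does not actually spell out a proof of this lemma, treating it as an immediate consequence of the chain rule applied to $f^{k_1} = f^{k_1-k_2}\circ f^{k_2}$, which is exactly the argument you give (including the observation that $\dfx{k_2}>0$ since $f'>\lambda>1$, so the division is legitimate).
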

%\begin{proof}
%    
%    \begin{gather*}
%        \frac{ \dfx{k_1} }{ \dfx{k_2} } = 
%        \frac{  \enbrace{ 
%            f^{k_1-k_2}(x) (f^{k_2}(x) ) }'  }
%            { \dfx{k_2} } = 
%            \\
%            =
%        \frac{ \dfx{k_2} \dfx{k_1-k_2}( f^{k_2}(x) ) }
%        { \dfx{k_2} } = 
%        {  \dfx{k_1-k_2}( f^{k_2} (x) ) }                    .
%    \end{gather*}
%\end{proof}

%%%%%%%%%%%%%%%%%%%%%%%%%%%%%%%%%%%%%%%%%%%%%%%%%%%%%%%%
%%%%%%%%%%%%%%%%%%%%%%%%%%%%%%%%%%%%%%%%%%%%%%%%%%%%%%%%
%%%%%%%%%%%%%%%%%%%%%%%%%%%%%%%%%%%%%%%%%%%%%%%%%%%%%%%%

\begin{lm}
    For every $x\in \mfd$ and natural $k\geq 1$ the following representation is valid:
    \begin{gather}
        \dfxdb{k}{x} =
          \sum_{p=0}^{k-1}   \dfx{k-p}   \dfxpdb{ f^{k-p}(x) }
        \frac{ \dfx{k} }{ \dfxp{ f^{k-p}(x) } } +
            \dfxpdb{x} 
            \frac{ \dfx{k} }{ \dfxp{ x } }  
            .  \label{eq:ddf_rep}
    \end{gather}
\end{lm}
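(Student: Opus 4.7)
The plan is to derive \eqref{eq:ddf_rep} by logarithmic differentiation of the chain-rule identity
\begin{gather*}
    (f^k)'(x) = \prod_{j=0}^{k-1} f'(f^j(x)).
\end{gather*}
Taking logarithms turns the product into a sum, and one differentiation in $x$ (applying the chain rule to each summand $\log f'(f^j(x))$, noting that $\frac{d}{dx} f^j(x) = (f^j)'(x)$) yields
\begin{gather*}
    \frac{(f^k)''(x)}{(f^k)'(x)} = \sum_{j=0}^{k-1} \frac{f''(f^j(x))}{f'(f^j(x))} \cdot (f^j)'(x).
\end{gather*}

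Multiplying both sides by $(f^k)'(x)$ produces a sum of $k$ terms indexed by $j = 0, \ldots, k-1$. The $j = 0$ contribution, using $(f^0)'(x) = 1$, is precisely the isolated boundary summand $f''(x) \cdot (f^k)'(x)/f'(x)$ appearing in \eqref{eq:ddf_rep}. For the remaining indices $j = 1, \ldots, k-1$, the substitution $p = k - j$ rewrites each term as $(f^{k-p})'(x) \cdot f''(f^{k-p}(x)) \cdot (f^k)'(x)/f'(f^{k-p}(x))$, which matches the summand in the statement. If a more self-contained presentation is preferred, the earlier identity \eqref{eq:dfdivide} can be invoked to re-express the ratio $(f^k)'(x)/f'(f^{k-p}(x))$ in product form, but this is not strictly necessary.

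An equally clean alternative is induction on $k$: the base $k = 1$ reduces to $f''(x) = f''(x)$; the inductive step applies the product rule to $(f^{k+1})'(x) = f'(f^k(x)) \cdot (f^k)'(x)$, then substitutes the inductive hypothesis for $(f^k)''(x)$ and regroups, with the new term $f''(f^k(x)) \cdot ((f^k)'(x))^2$ becoming the $p = 0$-type summand at the next level.

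I do not expect a real obstacle here: the entire argument is a one-line chain-rule computation. The only care required is in the index bookkeeping when separating off the $j = 0$ term and passing to the parametrization by $p = k - j$ used in the stated formula.
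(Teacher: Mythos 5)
Your logarithmic-differentiation route is correct and genuinely different from the paper's argument: the paper derives the one-step recursion $(f^{k+1})''(x) = f''(f^k(x))\bigl((f^k)'(x)\bigr)^2 + f'(f^k(x))(f^k)''(x)$ and unrolls it, whereas you differentiate $\log(f^k)'(x) = \sum_{j=0}^{k-1}\log f'(f^j(x))$, which gives the same answer in a single line and with less index-shuffling. However, you should be more careful about one thing you brush aside as ``index bookkeeping'': the formula your argument actually produces is
\begin{gather*}
    (f^k)''(x) = \sum_{j=0}^{k-1}(f^j)'(x)\,f''(f^j(x))\,\frac{(f^k)'(x)}{f'(f^j(x))}
    = \sum_{p=1}^{k-1}(f^{k-p})'(x)\,f''(f^{k-p}(x))\,\frac{(f^k)'(x)}{f'(f^{k-p}(x))} + f''(x)\,\frac{(f^k)'(x)}{f'(x)},
\end{gather*}
with the sum running over $p = 1,\dots,k-1$, not $p = 0,\dots,k-1$ as in the displayed lemma. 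The extra $p=0$ term, $(f^k)'(x)\,f''(f^k(x))\,(f^k)'(x)/f'(f^k(x))$, is not zero in general; already for $k=1$ the stated range gives $f''(x) = (f'(x))^2 f''(f(x))/f'(f(x)) + f''(x)$, which is false, while your version correctly reduces to $f''(x)=f''(x)$ because the sum is empty. So your computation is right, but it proves a (correct) variant of the lemma rather than the lemma as literally printed — the paper's range appears to carry an off-by-one — and it would be better to flag that discrepancy explicitly than to assert that the summand ``matches the summand in the statement'' and leave the range mismatch implicit.
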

\begin{proof}
    \begin{gather*}
        \dfxdb{k+1}{x} = \ddif{ \enbrace{ f(f^k(x)) } } =
        \enbrace{ \dfxp{f^k(x)} \dfx{k} }' =\\
        =
        \dfxpdb{f^k(x)} \enbrace{ \dfx{k} }^2 + \dfxp{f^k(x)} \dfxdb{k}{x} .
    \end{gather*}

    Therefore 
    \begin{gather*}
        \sum_{p=0}^{k-1}  \enbrace{ \dfx{k-p}  }^2 \dfxpdb{ f^{k-p}(x) }
        \prod_{j=1}^{p}  \dfxp{f^{k-j}(x)} + 
            %\frac{ \dfx{k} }{ \dfx{k-p+1} } +
        \dfxpdb{x} 
        \prod_{j=1}^{k-1}  \dfxp{f^{k-j}(x)} = \\
            %\frac{ \dfx{k} }{ \dfxp{ x } }  = \\
            %
        = \sum_{p=0}^{k-1}
        \enbrace{ \dfx{k-p}  }^2 \dfxpdb{ f^{k-p}(x) }
        %\prod_{j=1}^{p}  \dfxp{f^{k-j}(x)} + 
            \frac{ \dfx{k} }{ \dfx{k-p+1} } +
        \dfxpdb{x} 
        %\prod_{j=1}^{k-1}  \dfxp{f^{k-j}(x)}.
            \frac{ \dfx{k} }{ \dfxp{ x } }  .
    \end{gather*}
\end{proof}

%    \input{bound_distort}
    %Let $\Cbdistort$ be a constant from Lemma \ref{lm:bdistort}.
    Now we prove an estimate for \eqref{eq:alphadif2smnd}.
    
\begin{lm}   \label{lm:nlin_fderiv_bounds}
    Let $\delta_1,\delta_2>0$.
    For every $x\in\mfd$
    for every natural number $s>0$
    there exists
    $\Csecest(s)>0$ such that 
    for every
    natural number $N\geq s$ and 
    $h>0$ such that
    \begin{gather*} % \label{eq:Ndef_nlin_lm}
          \delta_1 \leq h { \Cbdistort \dfxx{N}{x} }\leq
          \delta_2  
        ,
    \end{gather*}
    the following estimate holds 
    \begin{gather}
         \sum_{i=0}^{\infty}
        v(f^i(x+h))\enbrace{\frac{1}{\dfxthx{i+1}{x+h} }
        - \frac{1}{\dfxth{i+1} } }   \label{eq:alphadif2smnd_lm}
        \leq
        \\
        %%%%%%%%%%%%%%%%%%%%%%%%%%%
        \leq 
        \frac{ \enbrace{\lambda^{-s\theta} + \Cbdistort^{\theta}
            \enbrace{ \frac{\maxdf}{\lambda^2}}^{s\theta} 
        }  
    \maxv }{ \lfth } 
        \delta_1^{-\theta} h^\theta 
         +
         \Csecest(s) N h 
         \nonumber
         .
    \end{gather}
\end{lm}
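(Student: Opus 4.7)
The plan is to split the sum in \eqref{eq:alphadif2smnd_lm} into a ``head'' $i<M$ and a ``tail'' $i\ge M$, for a cutoff index $M$ depending on $N$ and $s$ (plausibly $M$ near $N-s$ or $N$). The head will be estimated by a mean value argument, exploiting that distortion (Lemma~\ref{lm:bdistort}) is available at every level up to $N$ because the upper hypothesis $h\Cbdistort\dfxx{N}{x}\le\delta_2$ (together with an implicit $\delta_2\le 1$) puts $h$ in the distortion range. This will produce the linear-in-$h$ term $\Csecest(s)Nh$. The tail, where the two terms $\dfxx{i+1}{x+h}^{-\theta}$ and $\dfxx{i+1}{x}^{-\theta}$ can no longer be usefully compared, will be handled by the triangle inequality $|y_1^{-\theta}-y_2^{-\theta}|\le y_1^{-\theta}+y_2^{-\theta}$ combined with $|v|\le\maxv$, producing the $\delta_1^{-\theta}h^{\theta}$ term.

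For the tail, I would bound $\dfxx{i+1}{x+h}^{-\theta}$ by $\lambda^{-(i+1)\theta}$ using only the uniform-expansion inequality $f'\ge\lambda$, and then sum the geometric series starting at $i=M$. For $\dfxx{i+1}{x}^{-\theta}$, where the lower hypothesis $h\Cbdistort\dfxx{N}{x}\ge\delta_1$ is crucial, I would split further into $i+1\le N$ and $i+1>N$: in the first subrange I use $\dfxx{i+1}{x}\ge\dfxx{N}{x}/\maxdf^{N-i-1}$ (contributing factors of $\maxdf$), and in the second subrange $\dfxx{i+1}{x}\ge\dfxx{N}{x}\lambda^{i+1-N}$ (contributing factors of $\lambda$); both reduce to $\dfxx{N}{x}^{-\theta}\le\Cbdistort^{\theta}h^{\theta}\delta_1^{-\theta}$. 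Carefully collecting the exponents of $\lambda$ and $\maxdf$ and choosing $M$ as a function of $s$ should produce the combination $\lambda^{-s\theta}+\Cbdistort^{\theta}(\maxdf/\lambda^{2})^{s\theta}$.

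For the head, I would apply the mean value theorem twice:
\[
    \dfxx{i+1}{x+h}^{-\theta} - \dfxx{i+1}{x}^{-\theta}
    = -\theta\,\xi^{-\theta-1}\,\dfxdb{i+1}{\tau}\,h
\]
for intermediate $\xi$ and $\tau\in(x,x+h)$. Using formula \eqref{eq:ddf_rep} together with the telescoping estimate $\sum_{j=0}^{i}\dfxx{j}{\tau}\le\frac{\lambda\,\dfxx{i}{\tau}}{\lambda-1}$ yields $\vmod{\dfxdb{i+1}{\tau}}\le\frac{\maxddf}{\lambda-1}\dfxx{i+1}{\tau}\dfxx{i}{\tau}$. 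Lemma~\ref{lm:bdistort} then replaces $\xi,\tau$ by $x$ up to multiplicative constants, so each head summand is of the form $C\,h\,\dfx{i}^{1-\theta}$. Crude summation, using $\dfx{i}\le\maxdf^{i}$ and the head size at most $N$, gives the $\Csecest(s)Nh$ bound. The main obstacle is matching the precise tail prefactor $\lambda^{-s\theta}+\Cbdistort^{\theta}(\maxdf/\lambda^{2})^{s\theta}$: the first summand falls out naturally from the uniform-expansion estimate on $\dfxx{i+1}{x+h}^{-\theta}$, while the second requires identifying the correct split index $M$ so that distortion ($\Cbdistort^{\theta}$), the bridge $(f^{N-s})'(x)\ge\dfxx{N}{x}/\maxdf^{s}$ (contributing $\maxdf^{s\theta}$), and a residual geometric decay in $\lambda^{-2s\theta}$ conspire to give exactly this factor.
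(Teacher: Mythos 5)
Your overall architecture — head/tail split, mean value theorem plus formula \eqref{eq:ddf_rep} plus distortion for the head, and a separate comparison for the tail — mirrors the paper's proof. But there are two concrete problems in the tail and one in the head bookkeeping.

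The tail gap is the more serious one. You propose to bound $\dfxx{i+1}{x+h}^{-\theta}$ by $\lambda^{-(i+1)\theta}$ ``using only the uniform-expansion inequality'' and sum the geometric series from $i=M$. This produces a quantity of size roughly $\lambda^{-(M+1)\theta}/(1-\lambda^{-\theta})$, and with $M$ near $N$ this is roughly $\lambda^{-N\theta}$. But the hypotheses only give $\dfxx{N}{x}^{-\theta}\le(\Cbdistort h/\delta_1)^{\theta}$, and since $\dfxx{N}{x}\ge\lambda^N$ one has $\lambda^{-N\theta}\ge\dfxx{N}{x}^{-\theta}$ with the inequality strict (indeed $\lambda^{-N\theta}$ can exceed $\dfxx{N}{x}^{-\theta}$ by a factor of order $(\maxdf/\lambda)^{N\theta}$). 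So the pure $\lambda$-estimate does \emph{not} reduce to $h^{\theta}\delta_1^{-\theta}$; when $f$ is far from linear it is exponentially (in $N$) too large. The cure is to compare $\dfxx{i+1}{x+h}$ with $\dfxx{N}{x}$ rather than with $\lambda^{i+1}$. The paper does this by factoring $\dfxthx{N+s}{x}^{-1}$ out of both terms in the tail, so that the $h^{\theta}$ factor comes from $\dfxth{N+s}\ge\dfxth{N}\lambda^{s\theta}\ge(\delta_1/(\Cbdistort h))^{\theta}\lambda^{s\theta}$; the remaining ratio $\dfxthx{N+s}{x}/\dfxthx{N+s}{x+h}$ is controlled by distortion up to level $N$ and crude $\maxdf/\lambda$ bounds for the extra $s$ levels, which is exactly what produces the $\Cbdistort^{\theta}(\maxdf/\lambda)^{s\theta}$ piece of the prefactor. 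You actually carry out a comparison of this kind for the $x$-terms of the tail (reducing them to $\dfxx{N}{x}^{-\theta}$), but you explicitly decline to do so for the $x+h$-terms. Related to this, your guess that the cut index is near $N-s$ or $N$ is off: the paper cuts at $N+s-1$, i.e.\ \emph{past} $N$. Those extra $s$ levels are precisely where the rough $(\maxdf/\lambda)^s$ distortion enters, and it is by letting $s$ grow (using pinching, $\maxdf<\lambda^2$) that the tail prefactor is made small later in the argument.

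On the head: you claim each head summand is of the form $C\,h\,\dfx{i}^{1-\theta}$ and that crude summation with ``head size at most $N$'' gives $\Csecest(s)Nh$. That accounting does not close: $\sum_{i\le N}\dfx{i}^{1-\theta}$ is not $O(N)$ but is geometrically dominated by its top term, of order $\dfxx{N}{x}^{1-\theta}$. To arrive at a bound of the stated form one has to use distortion to replace all the $\xi$-derivatives and $(x+h)$-derivatives by $x$-derivatives and then exploit the cancellation between the $\dfxdb{k}{\xi}$ in the numerator (of order $\dfxx{k}{\cdot}^2/\lambda$ by \eqref{eq:ddf_rep}) and the three $\dfxx{k}{\cdot}^{\pm\theta}$ factors in the denominator, exactly as the paper does when it rewrites each head term as a sum over $p$ of quantities bounded by $\Cbdistort^{2\theta}/\lambda^{p+1}$; and the $s$ extra levels beyond $N$ (handled with the rough $\Cbdistort(\maxdf/\lambda)^{p}$ bound) is what makes $\Csecest$ depend on $s$. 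Your description does not contain this cancellation, and without it the head bound is exponentially wrong, not linear in $N$.
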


\begin{proof}

    Fix a natural number $s>0$.
    Split  \eqref{eq:alphadif2smnd_lm} into two parts:
    \begin{gather}
        \vmod{\sum_{i=0}^{\infty}
        v(f^i(x+h))\enbrace{\frac{1}{\dfxthx{i+1}{x+h} }
        - \frac{1}{\dfxth{i+1}}} }\leq \nonumber   \\     
        \leq
        \vmod{\sum_{i=0}^{N+s-2} v(f^i(x+h))
        \enbrace{\frac{1}{\dfxthx{i+1}{x+h} }
        - \frac{1}{\dfxth{i+1}}} } + \label{eq:1smd_nlin} \\          
        +
        \vmod{\sum_{i=N+s-1}^{\infty}
        v(f^i(x+h))\enbrace{\frac{1}{\dfxthx{i+1}{x+h} }
        - \frac{1}{\dfxth{i+1}}} } .      
        \label{eq:2smd_nlin}
    \end{gather}

    First we have to
    estimate \eqref{eq:2smd_nlin} using the lower bound on $h$,
    the distortion estimate 
    and formula \eqref{eq:dfdivide}.%\eqref{eq:Ndef_nlin}:

    \begin{gather*}
        \vmod{\sum_{i=N+s-1}^{\infty}
        v(f^i(x+h))
        \enbrace{\frac{1}{\dfxthx{i+1}{x+h} }
        - \frac{1}{\dfxth{i+1}}} }
        \leq
        \\
        \leq 
        \maxv \vmod{\sum_{i=0}^{\infty}
        \enbrace{
            \frac{1}{\dfxthx{N+s}{x+h} \dfxthx{i}{f^{N+s}(x+h) } }
            - \frac{1}{\dfxthx{N+s}{x}\dfxthx{i}{f^{N+s}(x)  } }
        } }
        \leq
        \\
        \leq 
        \maxv \vmod{
        \frac{1 }{\dfxthx{N+s}{x}}
        \sum_{i=0}^{\infty}
        \enbrace{
 \frac{\dfxthx{N+s}{x} }{\dfxthx{N+s}{x+h}}
            \frac{1}{ \dfxthx{i}{f^{N+s}(x+h) } }
            - \frac{1}{\dfxthx{i}{f^{N+s}(x)  } }
        } }
        \leq
        \\
        \leq
        \maxv  \frac{1+\Cbdistort^\theta
            \enbrace{ \frac{\maxdf}{\lambda}}^{s\theta} 
        } 
        { \dfxth{N+s}  }
        \sum_{i=0}^{\infty} \frac{1}{ \lambda^{i\theta} } 
    \leq
    \frac{ \enbrace{1+\Cbdistort^{\theta}
            \enbrace{ \frac{\maxdf}{\lambda}}^{s\theta} 
        }  
    \maxv }{ \lambda^{s\theta} (\lfth) } 
        \delta_1^{-\theta} h^\theta 
        .
    \end{gather*}

    Now we start estimating \eqref{eq:1smd_nlin}.

    Note that we have
       $ \vmod{ (1+h)^{\theta} - 1 } \leq \Ctaylorthpower h$.

    Since $f$ is $C^2$, there exists $\xi\in (x,x+h)$
    such that
    \begin{equation*}
        \dfxx{k}{x}  -  \dfxx{k}{x+h}  =  - \dfxdb{k}{\xi} h.
    \end{equation*}

    Using formulae \eqref{eq:dfdivide}, \eqref{eq:ddf_rep}
    and the distortion estimate,
    we can write for every natural number $1 \leq k \leq N$
    \begin{gather*}
        \vmod {\frac{1}{\dfxthx{k}{x+h} }
        - \frac{1}{ \dfxth{k} } } 
        = 
        \vmod{ \frac{\dfxthx{k}{x} - \dfxthx{k}{x+h} }
        {\dfxthx{k}{x}\dfxthx{k}{x+h}} } \leq  
        \\
        % 2
        \leq 
        \vmod{  \frac{  \Ctaylorthpower   \dfxdb{k}{\xi} h}
        { \dfxoth{k}{\xi}   
        \dfxthx{k}{x}\dfxthx{k}{x+h}}  } \leq 
        \\
      % 3
        \leq   
        \frac{\Ctaylorthpower \maxddf h}
        { \dfxoth{k}{\xi}    \dfxthx{k}{x}\dfxthx{k}{x+h}}
        \cdot
        \\ %4
        \cdot \enbrace{
            \sum_{p=0}^{k-1} 
            { \dfxx{k-p}{\xi} } %\dfxpdb{ f^{k-p}(x) }
            \frac{ \dfxx{k}{\xi} }{ \dfxp{f^{k-p}(\xi)} } +
                %\dfxpdb{x} 
            \frac{ \dfxx{k}{\xi} }{ \dfxp{ \xi } }  } =
            \\
                % 5
            =
            \frac{\Ctaylorthpower \maxddf h} {\dfxthx{k}{x+h}} \enbrace{
                \sum_{p=0}^{k-1} 
                \frac{ \dfxx{k-p}{\xi}  }{ \dfxoth{k}{\xi} \dfxthx{k}{x}}
                \frac{ \dfxx{k}{\xi} }{ \dfxp{ f^{k-p}(\xi) } } +
                %\dfxpdb{x} 
                \frac{ \dfxx{k}{\xi} }{ \dfxp{ \xi } } }
                =
                \\
                % 6
                =
                \frac{\Ctaylorthpower \maxddf h} {\dfxthx{k}{x+h}} %\enbrace{
                    \lb
                    \sum_{p=0}^{k-1} 
                    \frac{   \Cbdistort^\theta  }
                    { \dfxoth{p}{f^{k-p}(\xi) } \dfxthx{p}{f^{k-p}(x) } }
                    \frac{ \dfxx{k}{\xi}   }{\dfxp{f^{k-p}(\xi)} }
                    +     \right.
                    \\
  % 7
                    \left. +
                    \frac{ \dfxx{k}{\xi} }{ \dfxp{ \xi } }  
                    \rb =
                    \frac{ \dfxthx{k}{\xi} h } {\dfxthx{k}{x+h}} 
      %      \enbrace{ 
                    \lb
                    \sum_{p=0}^{k-1} 
                    \frac{   \Cbdistort^\theta  }
                    { \dfxoth{p}{f^{k-p}(\xi) }
                    \dfxthx{p}{f^{k-p}(x) } \dfxp{f^{k-p}(\xi) } }
%      \frac{\dfxx{k-p}{x}  \dfxx{k}{x}  }{\dfxx{k-p+1}{x} }
                    +     \right. 
                    \\
                    \left. +
                    \frac{ 1 }{ \dfxp{\xi} }  
                    \rb  
                    \leq
                    \Cbdistort^\theta \Ctaylorthpower \maxddf h \enbrace{ \sum_{p=0}^{k-1} 
                    \frac{\Cbdistort^\theta }{ \lambda^{ p(1-\theta) + p\theta + 1}  } 
                    + \frac{1}{\lambda}}
                    \leq
                    \frac{\Cbdistort^\theta \Ctaylorthpower \maxddf }{\lambda}
                    \enbrace{ \frac{\Cbdistort^\theta}{1-\lambda^{-1}} + 1} h.
                \end{gather*}
                For $k> N$ we can do a similar estimate
                using rough bounds instead of distortion estimates, i.e.
                \begin{equation*}
                    \frac{ \dfxx{N+p}{\xi} }{ \dfxx{N+p}{x} }  \leq \Cbdistort
                    \enbrace{ \frac{\maxdf}{\lambda}}^p
                    .
                \end{equation*}

                Therefore we have an
                estimate for \eqref{eq:1smd_nlin}
                %and $s = \max(0,N_h-1-N_h-1)$. 
                :
                \begin{gather*}
                    \vmod{\sum_{i=0}^{N+s-2}
                    v(f^i(x+h))\enbrace{\frac{1}{\dfxthx{i+1}{x+h} }
                    - \frac{1}{\dfxth{i+1}}} } 
                    \leq 
                    \\
                    \leq
%                \enbrace{
%                    \frac{\maxdf}{\lambda}}^{s\theta} 
                \frac{\Cbdistort^\theta
                \Ctaylorthpower \maxddf}{\lambda}
                \enbrace{
                    \frac{\Cbdistort^\theta}{1-\lambda^{-1}} + 1
                } N h
              +
              \\
              +
                \frac{\Cbdistort^\theta
                \Ctaylorthpower \maxddf }{\lambda}
              \Cbdistort^\theta  \enbrace{\frac{\maxdf}{\lambda}}^{(s-2)\theta}
              \enbrace{ 
                  \enbrace{\frac{\maxdf}{\lambda}}^{(s-2)\theta}
                    \frac{\Cbdistort^\theta}{1-\lambda^{-1}} + 1
              }
              (s-2) h
              .
                \end{gather*}          

            Put $\Csecest(s)$
            to be twice the last expression divided by $h$.
            \end{proof}
 % estimate second summand in representation of \alpha(x+h) - \alpha(x)

    %%%%%%%%%%%%%% Denote sum %%%%%%%%%%%%%%%%%%%%%%%

    The rest is very similar to the linear case.

    For $n\in \Zplus$ and $m \in \Zplusinf$ such that 
    $n\leq m$ we introduce the following notation for \eqref{eq:alphadif1smnd}:
    \begin{equation*}
        {S}_{n,m}(x,h) = 
        \sum_{i=n}^{m} \frac{1}{\dfxthx{i+1}{x} }
        \enbrace{v(f^i(x+h)) -v(f^i(x))}     .
        %\sum_{i=n}^{m} \frac{v'(f^i(x))}{\dfxth{i+1} }
    \end{equation*}

    Note that for every $n\in\Zplus$ there
    exist $\xi_i \in (x,x+h)$ for $0\leq i\leq n$
    such that
    \begin{gather*}
        S_{0,N}(x,h)   
        =
        {h} 
        \sum_{i=0}^{N} v'(f^i(\xi_i))
        \frac{ \dfxx{i}{\xi_i}   }{\dfxthx{i+1}{x }} 
        =            \\
        =
        {h} 
        \sum_{i=0}^{N} v'(f^i(\xi_i))
        \frac{ \dfxx{i}{\xi_i}   }{\dfxthx{i}{x }} 
        \frac{ 1 }{ \dfxthxp{ f^i(x) } }
        .
    \end{gather*}

\begin{lm}        \label{lm:nlin_supp_bounds}
    Let $x\in\mfd$, let $N$ be a natural number,  
    $0<\delta_1,\delta_2<1$
    and $h>0$ be such that
       $ {  \delta_1} \leq h{ \Cbdistort \dfxx{N}{x} }$.
    Then 
     the following estimate holds:
\begin{gather*}
  \vmod{{S}_{N+1,\infty}(x,h) } \leq
    \frac{2 
\maxv}{ \lambda^{2\theta}
    ( \lfth ) } \delta_1^{-\theta}  
                h^{\theta}
                .
\end{gather*}
\end{lm}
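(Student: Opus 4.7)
The proof will closely mirror Lemma \ref{lm:lin_supp_bounds}, substituting $\dfxx{N}{x}$ for $\lambda^N$ via the bound $f' \geq \lambda$. First I would bound each summand crudely using $\vmod{v} \leq \maxv$:
\begin{equation*}
  \vmod{S_{N+1,\infty}(x,h)} \leq 2\maxv \sum_{i=N+1}^{\infty} \frac{1}{\dfxthx{i+1}{x}}.
\end{equation*}

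Next I would factor the derivative using the chain rule \eqref{eq:dfdivide}: for $i \geq N+1$, write $\dfxx{i+1}{x} = \dfxx{N}{x} \cdot \dfxx{i+1-N}{f^{N}(x)}$, and use the uniform lower bound $f' \geq \lambda$ on the second factor to get $\dfxx{i+1}{x} \geq \dfxx{N}{x} \cdot \lambda^{i+1-N}$. Raising to power $\theta$ and summing the resulting geometric series (with $j = i+1-N$ starting at $j=2$) gives
\begin{equation*}
  \sum_{i=N+1}^{\infty} \frac{1}{\dfxthx{i+1}{x}} \leq \frac{1}{(\dfxx{N}{x})^{\theta}} \sum_{j=2}^{\infty} \lambda^{-j\theta} = \frac{1}{(\dfxx{N}{x})^{\theta} \, \lambda^{2\theta}(\lfth)}.
\end{equation*}

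Finally, the hypothesis $\delta_1 \leq h\,\Cbdistort\,\dfxx{N}{x}$ gives $(\dfxx{N}{x})^{-\theta} \leq (\Cbdistort h/\delta_1)^{\theta}$, which converts the derivative factor into $h^{\theta}\delta_1^{-\theta}$ and yields the claimed estimate (the constant $\Cbdistort^{\theta} \geq 1$ is absorbed by adjusting $\delta_1$, or is harmless in the downstream application). There is no genuine obstacle here — the only care required is in the bookkeeping of the starting index of the geometric series so that the factor $\lambda^{-2\theta}$ appears, exactly as in the linear Lemma \ref{lm:lin_supp_bounds}. This is a purely tail estimate; the distortion lemma is not actually needed in this part, since we only use the one-sided inequality $\dfxx{k}{\cdot} \geq \lambda^{k}$, and $\Cbdistort$ enters only through the hypothesis on $h$.
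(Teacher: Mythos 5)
Your proof is correct and follows essentially the same route as the paper: bound each summand by $2\maxv$, factor out $(f^N)'(x)$ via the chain rule, use $f'\geq\lambda$ to reduce the tail to a geometric series starting at index $2$, and finally convert $((f^N)'(x))^{-\theta}$ to $h^\theta\delta_1^{-\theta}$ via the lower bound on $h$. Your remark about $\Cbdistort^\theta$ is a good catch — the paper's own chain of inequalities in this lemma's proof also produces that factor, so the constant in the stated bound is off by $\Cbdistort^\theta\geq 1$, though as you note this is harmless where the lemma is applied.
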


\begin{proof}[Proof of the lemma]
We may easily estimate the tail
of ${S_{0,\infty} (x,h)}$ using the lower bound on $h$:
\begin{gather*}
    \vmod{{S}_{N+1,\infty} (x,h) }  \leq
    2\maxv \frac{1}{\dfxth{N+2}} \sum_{i=0}^{\infty}  \lambda^{-i\theta} 
    = \\
    =  \frac{2\maxv}{1 - \lambda^{-\theta}} 
    \frac{1}{ \dfxthx{2}{f^N(x)} \dfxthx{N}{x}}
    \leq 
    \\
    \leq
    \frac{2 
\maxv}{ \lambda^{2\theta}
    ( \lfth ) } \delta_1^{-\theta} h^\theta  
    .
\end{gather*}

\end{proof}

\begin{lm}           \label{lm:nlin_lower_bd}
    Let $x\in\mfd$, let $N$ be a natural number,  
    $0<\delta_1,\delta_2\leq 1$
    and $h>0$ be such that
     $   {  \delta_1} \leq 
        h { \Cbdistort \dfxx{N}{x} }
        \leq
        {  \delta_2 }$.
    Then
    \begin{gather*}
        \vmod{ S_{0,N}(x,h) } \leq
        \delta_1^{1-\theta}
            \frac{2 \Cbdistort^\theta \maxvprime}{ \lambda^\theta (\lfoth)    } 
        h^{\theta}
        .
    \end{gather*}

    If  $x,c\in\mfd$
    such that
    $ \dist(f^{  N}(x), c ) \leq \delta_2$,
    and 
    $ v'(c) > 0$
    then

    \begin{gather*}
        h^{-\theta}
        \frac
        { \maxdf^{\theta} \Cbdistort^{2-\theta} }
        { \delta_1^{1-\theta} }
        \vmod{ S_{N} }
        \geq
            v'(c)
            -
            2\Cdvtmp 
              \delta_2^\eps 
            -
            \frac{  \maxdv \maxdf^\theta \Cbdistort^2 }
            { \lambda (\lfoth)  }   
            ,
    \end{gather*}
    where $\Cdvtmp$ is a local $\eps$-\holder constant for $v'$ at point $f^N(x)$.
\end{lm}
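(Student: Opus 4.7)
The plan is to follow exactly the template of the linear Lemma \ref{lm:lin_lower_bd}, replacing the clean factors $\lambda^i$ by $(f^i)'(x)$ and inserting distortion corrections from Lemma \ref{lm:bdistort} wherever derivatives need to be compared at different points. Both bounds start from the mean-value representation
\[
S_{0,N}(x,h) \;=\; h\sum_{i=0}^{N} v'(f^i(\xi_i))\,
\frac{(f^i)'(\xi_i)}{(f^{i+1})'(x)^{\theta}}, \qquad \xi_i\in (x,x+h),
\]
and the hypothesis $h\,\Cbdistort\,(f^N)'(x)\le\delta_2\le 1$ guarantees that Lemma \ref{lm:bdistort} applies to $f^N$ on $[x,x+h]$, hence every ratio $(f^i)'(\xi_i)/(f^i)'(x)$ lies in $[\Cbdistort^{-1},\Cbdistort]$.

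For the upper bound I would split $(f^{i+1})'(x)^{\theta}=(f^i)'(x)^{\theta} f'(f^i(x))^{\theta}$, apply distortion to replace $(f^i)'(\xi_i)$ by $(f^i)'(x)$, and then use the chain-rule identity $(f^i)'(x)=(f^N)'(x)/(f^{N-i})'(f^i(x))$ together with $(f^{N-i})'(f^i(x))\ge\lambda^{N-i}$ to dominate each term by $\Cbdistort (f^N)'(x)^{1-\theta}/(\lambda^{\theta}\lambda^{(N-i)(1-\theta)})$. Summing the geometric series produces the denominator $\lfoth$, and the algebraic identity $h\,(f^N)'(x)^{1-\theta}=h^{\theta}\,(h(f^N)'(x))^{1-\theta}$ combined with the upper bound on $h(f^N)'(x)$ converts the whole expression into the promised $h^{\theta}$-form.

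For the lower bound I would isolate the $i=N$ leading term and treat the remaining $i<N$ sum as an error. The leading term is at least $h\,v'(f^N(\xi_N))\,(f^N)'(x)^{1-\theta}/(\Cbdistort\,\maxdf^{\theta})$, and applying $h(f^N)'(x)\ge\delta_1/\Cbdistort$ turns this into $h^{\theta}\,v'(f^N(\xi_N))\,\delta_1^{1-\theta}/(\Cbdistort^{2-\theta}\maxdf^{\theta})$. Two successive invocations of the $\eps$-Hölder continuity of $v'$ — first between $f^N(\xi_N)$ and $f^N(x)$, whose separation is bounded via the mean-value theorem and distortion by $\Cbdistort\,h\,(f^N)'(x)\le\delta_2$, and then between $f^N(x)$ and $c$, whose distance is $\le\delta_2$ by hypothesis — replace $v'(f^N(\xi_N))$ by $v'(c)$ with error at most $2\Cdvtmp\,\delta_2^{\eps}$. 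The positivity assumption on $v'(c)$ prevents any sign cancellation. The tail $\sum_{i=0}^{N-1}$ is estimated by the same geometric-sum argument as in the upper bound but starting one index later, which contributes the extra $\lambda^{-(1-\theta)}$ factor that, combined with the existing $\lambda^{-\theta}$, produces the denominator $\lambda(\lfoth)$ in the error term. Multiplying through by $h^{-\theta}\maxdf^{\theta}\Cbdistort^{2-\theta}/\delta_1^{1-\theta}$ yields the stated inequality.

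The main obstacle is purely bookkeeping rather than conceptual: unlike the linear setting where $(f^i)'(\xi_i)/(f^i)'(x)^{\theta}=\lambda^{i(1-\theta)}$, here the same expression splits via distortion into a $\theta$- and a $(1-\theta)$-power and each needs its own $\Cbdistort$ correction, so one has to be careful that the $\Cbdistort$ corrections line up to produce exactly $\Cbdistort^{2-\theta}$ on the left-hand side and $\Cbdistort^{2}$ inside the error term, and that the $\maxdf^{\theta}$ factor (arising from the single $f'(f^N(x))^{\theta}$ not controlled by expansion) appears in both places as claimed.
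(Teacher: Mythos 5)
Your proposal reconstructs the paper's proof essentially verbatim: the same mean-value representation, the same splitting of $(f^{i+1})'(x)^{\theta}=(f^i)'(x)^{\theta}\,f'(f^i(x))^{\theta}$ with a single $\Lambda_1^{\theta}$ surviving from the $i=N$ factor, the same distortion corrections producing $\Cbdistort^{2-\theta}$ and $\Cbdistort^{2}$, the same two Hölder invocations (between $f^N(\xi_N)$ and $f^N(x)$, then between $f^N(x)$ and $c$) each costing $\Cdvtmp\,\delta_2^{\eps}$, and the same geometric-series estimate on the $i<N$ tail giving $\lambda(\lfoth)$ in the denominator. The approach and all key steps coincide with the paper's argument for Lemma~\ref{lm:nlin_lower_bd}.
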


\begin{proof}[Proof of the lemma]

    First estimate from the statement follows from the upper bound on $h$:
    \begin{gather*}
        h^{-\theta}
        \enbrace{\frac{\delta_1}{\Cbdistort}}^{\theta-1}
        \vmod{ S_{0,N}(x,h) }
        =
        h^{1-\theta} 
        \enbrace{\frac{\delta_1}{\Cbdistort}}^{\theta-1}
        \vmod{
            \sum_{i=0}^N
             v'(f^i(\xi_i))
        \frac{ \dfxx{i}{\xi_i}   }{\dfxthx{i}{x }} 
        \frac{ 1 }{ \dfxthxp{ f^i(x) } } 
    }                                   
              \leq 
              \\
              \leq
              \Cbdistort^\theta \lambda^{-\theta}
        \vmod{
            \sum_{i=0}^N
             v'(f^i(\xi_i))
        \frac{ \dfxoth{i}{\xi_i}   }{\dfxoth{N}{x }} 
    }
            \leq 
              \Cbdistort \lambda^{-\theta}
        \vmod{
            \sum_{i=0}^N
             v'(f^i(\xi_i))
        \frac{ 1   }{\dfxoth{N-i}{x }} 
    }
    \leq
    \\
    \leq
            %%%%%%%%%%%%%%%%%%%%%%%%%%%%%%%%%%%%%
            \frac{2 \Cbdistort \maxvprime}{ \lambda^\theta (\lfoth)    } 
            .
    \end{gather*}

        Note that we have for certain $\xi\in (x,\xi_N) \subset (x,x+h)$
    \begin{gather*}
        \vmod{ v'(f^{  N}(\xi_N)) - 
        v'(f^{ N}(x)) }
        \leq  
        \Cdvtmp \vmod{ f^{  N}(\xi_N) - f^{  N}(x) }^\eps
        =
        \\
        =
        \Cdvtmp \vmod{ \dfxx{  N}{\xi} (\xi_N - x)}^\eps 
        =
        \Cdvtmp  \vmod{ \Cbdistort 
            \dfxx{ N}{x} (\xi_N - x)}^\eps 
        \leq 
        \Cdvtmp  \delta_2^\eps .
    \end{gather*}
    We also have 
        $\vmod{ v'(f^{ N}(x) )  - v'(c) } \leq 
        \Cdvtmp \delta_2^\eps$.
    Using the distortion estimate we get
    \begin{gather*}
        h^{-1} \vmod{ {S}_{0,N}(x,h) } 
        =
        \vmod{
        \sum_{i=0}^{N} v'(f^i(\xi_i))
        \frac{ \dfxx{i}{\xi_i}   }{\dfxthx{i}{x }} 
        \frac{ 1 }{ \dfxthxp{ f^i(x) } } 
    } \geq 
    \\
        \geq
            \vmod{v'(f^N(\xi_N))}
            \frac{ \dfxx{N}{\xi_N}   }{\dfxthx{N}{x }} 
            \frac{ 1 }{ \dfxthxp{ f^N(x) }  }  
            -
            \sum_{i=0}^{N-1} \vmod{v'(f^i(\xi_i))}
            \frac{ \dfxx{i}{\xi_i}   }{\dfxthx{i}{x }} 
            \frac{ 1 }{ \dfxthxp{ f^i(x) }  } 
            \geq
            \\
            \geq 
            \vmod{v'(f^N(\xi_N))}
            \frac{ \dfxoth{N}{\xi_N}   }
            { \Cbdistort^{\theta} \maxdf^\theta  }  
            -
            \sum_{i=0}^{N-1} \vmod{v'(f^i(\xi_i))}
            \frac{ \Cbdistort^\theta \dfxoth{i}{\xi_i}   }{ \lambda^\theta  } 
            .
    \end{gather*}

%    [ one can replace $\maxdf$ by $f'(c)$ here ]
%
%    [ if $c$ is periodic and $x$ is an image of $c$, then
%        one can use min of $f'$ over the period instead of 
%    $\lambda$  ]

    Using \eqref{eq:dfdivide} and
    the lower bound on $h$ 
     and \eqref{eq:vDiff}
    %and \eqref{eq:Nofdeltadef_nlin}
    we can write the following estimates:
    \begin{gather*}
        h^{-\theta}
        \vmod{ {S}_{0,N}(x,h) }
    \geq
    \vmod{v'(f^{N}(\xi_N)) }
            \frac{ \delta_1^{1-\theta} }
            { \Cbdistort^{2(1-\theta)+\theta} \maxdf^\theta }
            -
            \lambda^{-\theta} \sum_{i=0}^{N-1} \vmod{v'(f^i(\xi_i)) }
            \frac{\Cbdistort^{\theta}  \delta_1^{1-\theta} }
            {  \dfxoth{N-i}{f^{i}(\xi_i)  } }  
            \geq 
            %%%%%%%%%%%%%%%%%%%%%%%%%%%%%%%%%%%%%
            \\ 
            \geq  
            \vmod{v'(f^{N}(x)) } 
            \frac{ \delta_1^{1-\theta} }
            { \Cbdistort^{2(1-\theta)+\theta} \maxdf^\theta }
            -
            \Cdvtmp 
            \frac{ \delta_1^{1-\theta} \delta_2^\eps }
            { \Cbdistort^{2(1-\theta)+\theta} \maxdf^\theta }
            -
            \frac{\Cbdistort^{\theta} \maxdv}
                {  \lambda^{\theta} } 
            \sum_{i=0}^{N-1} 
            \frac{  \delta_1^{1-\theta} }
            {  \dfxoth{N-i}{f^{i}(\xi_i)  } }  
            \geq 
            \\
            \geq
            v'(c)
            \frac{ \delta_1^{1-\theta} }
            { \Cbdistort^{2-\theta} \maxdf^\theta }
            -
            \Cdvtmp 
            \frac{ \delta_1^{1-\theta} \delta_2^\eps }
            { \Cbdistort^{2-\theta} \maxdf^\theta }
            -
            \Cdvtmp 
            \frac{ \delta_1^{1-\theta} \delta^\eps }
            { \Cbdistort^{2-\theta} \maxdf^\theta }
            -
            \Cbdistort^{\theta} \delta_1^{1-\theta} 
            \frac{  \maxdv }
            { \lambda (\lfoth)  }   
            .
            \end{gather*}
        \end{proof}

    \subsubsection{Upper bound}
    First we prove that $\alpha$ is $\theta$-\holder.
    For every $x$ for every $\lambda^{-1}/(2 \Cbdistort) >h>0$
    take  a natural number $N$ such that
    $ { 1/2  } \leq 
    h { \Cbdistort\dfxx{N+1}{x} }
    \leq
    { 1 }    $.
    Then lemmas \ref{lm:nlin_fderiv_bounds}, \ref{lm:nlin_supp_bounds}
    and \ref{lm:nlin_lower_bd} 
    for 
    $\delta_1 = 1/2$ and  $\delta_2 =1$ %and $N=N_h$
    imply that
    the following upper bound for the normalized absolute value
    of \eqref{eq:alphadif_nonlin_large_lbd} holds:
    \begin{gather*}
        {\vmod{\alpha(x+h) - \alpha(x) } }{h^{-\theta}} 
        \leq 
        C
         +
         C' N h^{1-\theta}  
         .
         %\label{neq:hol_const_upper_bd}
    \end{gather*}
    where $C,C'>0$ do not depend on $h$ and $N$.
 %   The last expression is less than $C+\Csecest$ for
 %   every $N$ such that $\ln N/N < \ln \lambda$.

    As this bound does not depend on $h$ and $N$ (the last term
    is negligible because $h$ is exponentially small in $N$), it
    proves that $\alpha$ is $\theta$-\holder.

    %%%%%%%%%%%%%%%%%%%%%%%%%%%%%%%
    %%%%%%%%%%%%%%%%%%%%%%%%%%%%%%%
    %%%%%%%%%%%%%%%%%%%%%%%%%%%%%%%

    \subsubsection{Lower bound when
       % , large $\lambda$ 
    condition (A) holds for $\powk = 1$ }
    \label{sssec:lb_pow1}

    Assume that condition (A) is satisfied for 
    a point $c$ and $\powk = 1$. 
    Suppose without restricting generality that $v'(c)>0$.

    Fix $\hat{h}$.
    We first give expressions for $\delta_1,\delta_2$
    that depend only on $f$ and $v$
    and later
    select $x$, $N$ and $h = h(N,\delta_1,\delta_2)$
    such that $ \vmod{\alpha(x) - \alpha(x+h) } / h^\theta$ 
    has a positive lower bound.

    For every
    $0<\delta_1,\delta_2\leq 1$
    for every $x$ for which  
    there exists a natural number $N>1$ such that
      $  \vmod{f^{ N}(x) - c} < 
        \delta_2 %\maxdf^{-1}
        $,
    the lemmas \ref{lm:nlin_fderiv_bounds},
    \ref{lm:nlin_supp_bounds} and \ref{lm:nlin_lower_bd}
    imply that
    for every
        $h = h(N)$ such that 
        ${  \delta_1} 
        \leq 
        h  { \Cbdistort \dfxx{N}{x} }
        \leq
        { \delta_2 }   $,
    the following lower bound  for the normalized
    absolute value
    of \eqref{eq:alphadif_nonlin_large_lbd} holds:
    \begin{gather}
        \frac{\vmod{\alpha(x+h) - \alpha(x) } }{h^\theta} 
        \frac{\maxdf^{\theta} \Cbdistort^{2-\theta} } 
        {\delta_1^{1-\theta}}  
        \geq 
            v'(c)
            -
            2\Cdvtmp 
            \delta_2^\eps 
            -
            \frac{  \maxdv \maxdf^\theta \Cbdistort^{2} }
            { \lambda (\lfoth)  }   
                -
                \nonumber
                \\
                -      
                \enbrace{ \frac{\maxdf}{\lambda^{2}} }^\theta  
                \frac{2 \Cbdistort^{2-\theta}  \maxv }{  \lfth  } \delta_1^{-1}  
                \nonumber
            -   \\
            -
            % nonlin
        \frac{\delta_1^{1-\theta}}  {\maxdf^{\theta} \Cbdistort}
            \enbrace{
        \frac{ \enbrace{\lambda^{-s\theta} + \Cbdistort^{\theta}
            \enbrace{ \frac{\maxdf}{\lambda^2}}^{s\theta} 
        }  
    \maxv }{ \lfth } 
        \delta_1^{-\theta} h^\theta 
         +
         \Csecest(s) N h^{1-\theta}  }
         .
         \label{neq:lbound_nlin_large}
    \end{gather}
    %where $\Cdvjz$ is a local $\eps$-\holder constant for $v'$ at point 
    %$f^N(x)$.
  %  Note that the right-hand side of 
  %  this expression does not depend on $x,N$ and $h$.

Put
\begin{gather*}
    \delta_1 = \enbrace{ \frac{\maxdf}{\lambda^{2}} }^\theta 
    \frac{6 \maxv \Cbdistort^{2-\theta}}{ (\lfth) \dvc }
    ,\quad 
    \delta_2 = \enbrace{ \frac{\dvc}{6\Cdv}  }^{1/\eps} 
    .
\end{gather*}
Inequalities \eqref{neq:A_bd1},\eqref{neq:A_bd3} from condition (A)
imply that $\delta_1\leq \delta_2\leq 1$.

Now fix a point $x$ such that there exists
a natural number $N$ such that
\begin{gather*}
        \vmod{f^{ N}(x) - c} < 
        \delta_2;\quad 
        { \delta_2 } < \hat{h} {\dfxx{N}{x} }
    .
\end{gather*}
One can take $x$ to be a point from
$N$'th-preimage of $B_{\delta_2}(c)$ for sufficiently large $N$ or
any point with a
dense trajectory (the set of which has full Lebesgue measure).
In the latter case $x$ does not depend on $\hat{h}$.

Now inequality \eqref{neq:A_bd2} from condition (A)
and the definition of $\delta_1,\delta_2$
allow us to select an $h < \hat{h}$ such that 
    \begin{gather*}
        \frac{\vmod{\alpha(x+h) - \alpha(x) } }{h^\theta} 
        \frac{\maxdfper^{\theta} \Cbdistort^{2-\theta}} 
        {\delta_1^{1-\theta}}  
        \geq 
        \\
        \geq
        \frac{ \dvc }{12}
        -
            % nonlin
        \frac{\delta_1^{1-\theta}}  {\maxdf^{\theta} \Cbdistort}
            \enbrace{
        \frac{ \enbrace{\lambda^{-s\theta} + \Cbdistort^{\theta}
            \enbrace{ \frac{\maxdf}{\lambda^2}}^{s\theta} 
        }  
    \maxv }{ \lfth } 
        \delta_1^{-\theta} h^\theta 
         +
         \Csecest(s) N h^{1-\theta}  } .
    \end{gather*}

    Choosing $s$ large enough and 
    increasing 
    $N$ if necessary (depending on the choice of $s$) we can 
make the last summand less than  $\dvc/24$ because of pinching condition.

    %[TODO: write in simple case collecting constants]

    %\input{proofnonlin_small_lbd}

\end{proof}

\begin{proof}[Proof of Theorem \ref{cor:powk1}]
    Put $c$ to be a point where the maximum of $v'$ is attained.
    This implies that $v''(c) = 0$.

    A modification of the proof ot 
    Proposition \ref{thm:thetwist_expcircle}
    gives the necessary result.
    
%    Then $(f,v)$ satisfy condition (A) for $c$ and $\powk = 1$
%    and Proposition \ref{thm:thetwist_expcircle} holds.

    First we modify a proof of 
    Lemma \ref{lm:nlin_lower_bd}.
    Note that we have for 
    certain $\xi\in (x,\xi_N) \subset (x,x+h)$
    \begin{equation*}
        \vmod{ f^{  N}(\xi_N) - f^{  N}(x) }
        =
         \vmod{ \dfxx{  N}{\xi} (\xi_N - x)} 
        \leq 
         \Cbdistort \dfxx{  N}{x} h 
         \leq 
         \delta_2
    \end{equation*}

    Let $\Gamma$ is the local $\eps$-\holder constant
    of $v''$ at point $c$ (morally it is a third derivative of $v$
    at $c$).
    Thus
    \begin{gather*}
        \vmod{ v'(f^{  N}(\xi_N)) - 
        v'(f^{ N}(x)) }
        \leq  
        \\
        \leq
        \enbrace{v''(f^N(x)) +  
        \Cdvtmp \vmod{ f^{  N}(\xi_N) - f^{  N}(x) }^\eps
          }
        \vmod{ f^{  N}(\xi_N) - f^{  N}(x) }
        \leq
        \\
        \leq
        (v''(c) + \Cdvtmp\vmod{\dfxx{N}{x}-c}^\eps + 
        \Cdvtmp\delta_2^\eps )
        \delta_2
        \leq 
        2\Cdvtmp \delta_2^{1+\eps}
        .
    \end{gather*}

    Note also that
    \begin{equation*}
        \vmod{ v'(f^{ N}(x) )  - v'(c) } 
        \leq 
        \Cdvtmp \delta_2^\eps
        \vmod{ f^{ N}(x)   - c } 
        \leq 
        \Cdvtmp \delta_2^{1+\eps}
        .
    \end{equation*}

    Then 
    the last estimate from 
    the proof of Lemma \ref{lm:nlin_lower_bd}
    implies the following:
    \begin{gather*}
        h^{-\theta}
        \frac{ \delta_1^{1-\theta} }
        { \maxdf^{\theta} \Cbdistort^{2-\theta} }
        \vmod{ S_{N} }
        \geq
            v'(c)
            -
            3\Cdvtmp \delta_2^{1+\eps}
            -
            \frac{  \maxdv \maxdf^\theta \Cbdistort^2 }
            { \lambda (\lfoth)  }   
            .
    \end{gather*}

    Then the esitimate
    \eqref{neq:lbound_nlin_large} changes as well:
    \begin{gather*}
        \frac{\vmod{\alpha(x+h) - \alpha(x) } }{h^\theta} 
        \frac{\maxdf^{\theta} \Cbdistort^{2-\theta} } 
        {\delta_1^{1-\theta}}  
        \geq 
            v'(c)
            -
            3\Cdvtmp 
            \delta_2^{1+\eps}
            -
            \frac{  \maxdv \maxdf^\theta \Cbdistort^{2} }
            { \lambda (\lfoth)  }   
                -
                \\
                -      
                \enbrace{ \frac{\maxdf}{\lambda^{2}} }^\theta  
                \frac{2 \Cbdistort^{2-\theta}  \maxv }{  \lfth  } \delta_1^{-1}  
            -
            % nonlin
        \frac{\delta_1^{1-\theta}}  {\maxdf^{\theta} \Cbdistort}
            \enbrace{
        \frac{ \enbrace{\lambda^{-s\theta} + \Cbdistort^{\theta}
            \enbrace{ \frac{\maxdf}{\lambda^2}}^{s\theta} 
        }  
    \maxv }{ \lfth } 
        \delta_1^{-\theta} h^\theta 
         +
         \Csecest(s) N h^{1-\theta}  }
         .
    \end{gather*}

    Put
    \begin{gather*}
        \delta_1 = \enbrace{ \frac{\maxdf}
        {\lambda^{2}} }^\theta 
        \frac{6 \maxv \Cbdistort^{2-\theta}}{ (\lfth) \dvc }
        ;\quad
        \delta_2 = 
        \enbrace{ \frac{\dvc}{9\Cdvtmp}  }^{1/{1+\eps} } 
        .
        %\frac{1}{\Cbdistort }
    \end{gather*}

        It is easy to see that
        $\frac{\delta_1}{\delta_2}$ is proportional to
          $  {\maxv \Cdvtmp}/{ (v'(c))^{1+\frac{1}{1+\eps}} }$.
        Note then if one multiplies $v$ by a constant
        its \holder exponents do not change.
        Therefore we can assume (multiplying $v$
        by a small enough constant) that 
        $\delta_1 \leq \delta_2$.

        Condition \eqref{neq:simple_bd1}
        from the statement of 
        the Corollary implies that $\delta_2 \leq 1$.
        Finally condition \eqref{neq:simple_bd2} implies that 
        \begin{equation*}
            \frac{  \maxdv \maxdf^\theta \Cbdistort^{2} }
            { \lambda (\lfoth)  }   
            \leq \frac{v'(c)}{4}
            .
        \end{equation*}

        Now we can literally repeat the arguments
        from 
        Subsection \ref{sssec:lb_pow1}
       % the proof
       % of Proposition \ref{thm:thetwist_expcircle} 
        to conclude.

        %As in the proof
        %of Proposition \ref{thm:thetwist_expcircle}
        %we can take $s$ as large as possible
        %and kill the term coming from the nonlinearity 
        %of $f$.
\end{proof}

\section{Acknowledgements}

This research was supported by the
Chebyshev Laboratory  (Department of Mathematics and
Mechanics, St. Petersburg State University)
[under RF Government grant 11.G34.31.0026];
 JSC "Gazprom Neft";
  St. Petersburg State University [thematic project 
  6.38.223.2014].

The author is grateful to Viviane Baladi and Daniel Smania
for advice and discussions.

\section{Appendix}

Here we prove 
the rest of the
Proposition \ref{thm:thetwist_expcircle}
         -- a lower bound for 
    $$\vmod{\alpha(x)-\alpha(x+h)}h^{-\theta}$$
for general $\powk\geq 1$.

\subsection{Proof of the lower bound for general expanding $f$, general $\powk$}

For a natural number $\powk \geq 1$
    for every 
    $x$ from $S^1$ and $h>0$
 we can rewrite the formula \eqref{eq:cohomSolFormula} as
\begin{gather*}
    \alpha(x) = - \sum_{j=0}^{\powk-1}
    \sum_{i=0}^{\infty} \frac{v(f^{\powk i+j}(x)) }{ \dfxth{\powk i+j+1} }
    .
\end{gather*}

Therefore
\begin{gather*}
    \alpha(x) - \alpha(x+h) = \\
    =
    \sum_{j=0}^{\powk-1}  B_j(x,h) 
        + \sum_{i=0}^{\infty} v(f^i(x+h))\enbrace{\frac{1}{\dfxthx{i+1}{x+h} }
        - \frac{1}{\dfxthx{i+1}{x} } } .  \nonumber %\label{eq:alphadif2smnd}
\end{gather*}
where
\begin{gather*}
    %\label{eq:alphapart_nonlin}
    B_j(x,h) =  %h^{-\theta}\enbrace{
        {
        \sum_{i=0}^{\infty} \frac{v(f^{\powk i+j}(x)) }{ \dfxthx{\powk i+j+1}{x} } 
    -
    \sum_{i=0}^{\infty} \frac{v(f^{\powk i+j}(x+h)) }{ \dfxthx{\powk i+j+1}{x}} 
        }
        =
        \\
        %%%%%%
        = \sum_{i=0}^{\infty}
        \frac{1}{\dfxthx{ (j+\powk i+1)}{x} }
        \enbrace{v(f^{j+\powk i}(x+h)) -v(f^{j+\powk i}(x))}   .
    %\label{eq:alphadif_nonlin_smalllbd} 
    \end{gather*}

    For $n\in \Zplus$ and $m \in \Zplusinf$ such that $n\leq m$
    we introduce the following notation:
    \begin{equation*}
        S^{(j)}_{n,m}(x,h) = 
        \sum_{i=n}^{m} \frac{1}{\dfxthx{j+\powk i+1}{x} }
        \enbrace{v(f^{\powk i}(x+h)) -v(f^{\powk i}(x))}     .
%
% \sum_{i=n}^{m} \frac{1}{\lambda^{(j + \powk  i+1)	\theta} }
% \enbrace{v(f^{j+\powk i }(x+h)) -v(f^{j+\powk i}(x))}     .
    \end{equation*}

    Note that for every $0\leq j \leq \powk-1$ for every $n\in\Zplus$ there
    exist $\xi_i = \xi_i^{(j)} \in (x,x+h)$ for $0\leq i\leq n$
    such that
    \begin{gather*}
        S_{0,N}(x,h)   
        =
        {h} 
        \sum_{i=0}^{N} v'(f^{j+\powk i}(\xi_i))
        \frac{ \dfxx{j+\powk i}{\xi_i}   }{\dfxthx{j+\powk i+1}{x }} 
        =            \\
        =
        {h} 
        \sum_{i=0}^{N} v'(f^i(\xi_i))
        \frac{ \dfxx{j+\powk i}{\xi_i}   }{\dfxthx{j +\powk i}{x }} 
        \frac{ 1 }{ \dfxthxp{ f^{j+\powk i}(x) } }
        .
    \end{gather*}

    \subsubsection{Technical lemmas}

\begin{lm}        \label{lm:nlin_small_supp_bounds}
    Let $x\in\mfd$, let $N$ be a natural number,  $0\leq j\leq \powk-1$,
    $0<\delta_1,\delta_2\leq 1$
    and $h>0$ be such that
    \begin{gather*} % \label{eq:Ndef_nlin_lm}
          \delta_1 \leq h { \Cbdistort \dfxx{\powk N}{x} }
       % \leq
%        \frac{  \delta_2 }{ \Cbdistort \dfxx{N}{x} }   
        .
    \end{gather*}

    Then 
    the following estimate holds:
\begin{gather*}
    \vmod{S^{(j)}_{N+1,\infty}(x,h) } \leq
    \frac{2 
\maxv}{ \lambda^{(\powk+j+1)\theta}
    ( \lfthk ) } \delta_1^{-\theta} h^\theta  
    .
\end{gather*}
\end{lm}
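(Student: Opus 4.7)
The plan is to mimic the proof of Lemma \ref{lm:nlin_supp_bounds} almost verbatim, the only substantive difference being that $S^{(j)}_{N+1,\infty}(x,h)$ sums over an arithmetic progression of step $\powk$ instead of step $1$. This replaces a geometric series of ratio $\lambda^{-\theta}$ by one of ratio $\lambda^{-\powk\theta}$, which is precisely why $\lfth$ in the denominator of the $\powk=1$ bound gets replaced here by $\lfthk$.

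First I would bound each difference $|v(f^{j+\powk i}(x+h)) - v(f^{j+\powk i}(x))|$ crudely by $2\maxv$, reducing the task to estimating $\sum_{i=N+1}^{\infty} \dfxthx{j+\powk i + 1}{x}^{-1}$. Then I would split the derivative via the chain rule,
\[
\dfxx{j+\powk i+1}{x} \;=\; \dfxx{\powk N}{x}\cdot \dfxx{j+\powk(i-N)+1}{f^{\powk N}(x)},
\]
and use $f'\geq \lambda$ on the second factor to bound it from below by $\lambda^{j+\powk(i-N)+1}$. Shifting the summation index by $k = i-N-1\geq 0$, the tail collects into
\[
\frac{1}{\dfxthx{\powk N}{x}}\sum_{k=0}^{\infty}\lambda^{-\theta(j+\powk+1+\powk k)} \;=\; \frac{\lambda^{-\theta(j+\powk+1)}}{\dfxthx{\powk N}{x}\,(\lfthk)}.
\]

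Finally I would convert $\dfxthx{\powk N}{x}^{-1}$ into $\delta_1^{-\theta} h^\theta$ via the hypothesis $\delta_1 \leq h\,\Cbdistort\,\dfxx{\powk N}{x}$, which yields $\dfxthx{\powk N}{x}^{-1}\leq (\Cbdistort h/\delta_1)^\theta$. Multiplying by the $2\maxv$ saved at the beginning produces exactly the claimed bound (absorbing a benign $\Cbdistort^\theta$ into constants, exactly as the analogous Lemma \ref{lm:nlin_supp_bounds} already does in the case $\powk=1$). I do not anticipate a genuine obstacle: the content is entirely mechanical bookkeeping, and the only point that requires care is verifying that the step-$\powk$ structure produces the correct exponent $(\powk+j+1)\theta$ on $\lambda$ and the factor $\lfthk$ in the denominator rather than $\lfth$.
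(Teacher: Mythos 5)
Your proof reproduces the paper's argument essentially step by step: bound $\vmod{v(\cdot)-v(\cdot)} \leq 2\maxv$, factor $\dfxx{j+\powk i+1}{x}$ through $\dfxx{\powk N}{x}$ by the chain rule, lower-bound the remaining factor by the appropriate power of $\lambda$ so the tail becomes a geometric series of ratio $\lambda^{-\powk\theta}$, then convert $\dfxthx{\powk N}{x}^{-1}$ into $(h/\delta_1)^\theta$ from the hypothesis. Your parenthetical about the $\Cbdistort^\theta$ factor is worth flagging explicitly: the hypothesis $\delta_1 \leq h\,\Cbdistort\,\dfxx{\powk N}{x}$ in fact yields $\dfxthx{\powk N}{x}^{-1}\leq \Cbdistort^{\theta}(h/\delta_1)^{\theta}$, so the literal constant in the lemma should carry an extra $\Cbdistort^{\theta}\geq 1$; the paper's own proof (and the $\powk=1$ analogue, Lemma~\ref{lm:nlin_supp_bounds}) silently drops it in exactly the same place, so this is a shared and inconsequential imprecision in the constant, not a gap in your reasoning.
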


\begin{proof}[Proof of the lemma]

We may easily estimate the tail
of ${S_{0,\infty} (x,h)}$ using the lower bound on $h$:
\begin{gather*}
    \vmod{S^{(j)}_{N+1,\infty} (x,h) }  \leq
    2\maxv \frac{1}{\dfxthx{\powk (N+1)+j+1}{x } } \sum_{i=0}^{\infty} 
    \lambda^{-i\powk\theta} 
    = \\
    =  \frac{2\maxv}{1 - \lambda^{-\powk\theta}} 
    \frac{1}{ \dfxthx{\powk + j+1}{f^{\powk N}(x )} 
        \dfxthx{\powk N}{x } }
    \leq 
    \\
    \leq
    \frac{2 
\maxv}{ \lambda^{(\powk+j+1)\theta}
    ( \lfthk ) } \delta_1^{-\theta} h^\theta  
    .
\end{gather*}

\end{proof}

    \begin{lm}        
    \label{lm:nlin_lower_bd_small_lbd}

    Let $x\in\mfd$, let $N$ be a natural number, $0\leq j\leq \powk-1$,
    $0<\delta_1,\delta_2 \leq 1$, $\delta>0$
    and $h>0$ be such that
    \begin{gather*} % \label{eq:Ndef_nlin_lm}
        \frac{  \delta_1}{ \Cbdistort \dfxx{\powk N}{x} } 
        \leq h 
        \leq
        \frac{  \delta_2 }{ \Cbdistort \dfxx{\powk N}{x} }   
        .
    \end{gather*}
    Suppose also that $0\leq j\leq \powk -1 $ 
    and
    \begin{equation}
        \delta_2 < \maxdf^{-j}
        \label{neq:cond_to_use_bd}
        .
    \end{equation}

    If $x$ is a point of $\mfd$
    such that
    $ \dist(f^{j + \powk N}(x), f^j(c) ) \leq \delta$
    and
    $ v'(f^{j}(c)) > 0$
    then
    \begin{gather*}
        h^{-\theta} \vmod{ S^{(j)}_{0,N}(x,h) } 
        \frac
        { \Cbdistort^{2-\theta} \maxdf^\theta  }  
        {\delta_1^{1-\theta}  \lambda^{j(1-\theta)}}
        \geq
         v'(f^{j} (c)) -
        \Cdvtmp \delta_2^\eps  \maxdf^{j\eps} 
        - \Cdvtmp \delta^\eps
        -
        \\
        -
        \frac{ \maxdv \Cbdistort^{2}
         }
        { \lambda^{\powk(1-\theta)+\theta} (\lfothk)  }  
        \frac{ \maxdf^{j(1-\theta) +\theta} }
        { \lambda^{j(1-\theta)} },
    \end{gather*}
    where $\Cdvtmp$ is a local $\eps$-\holder constant for $v'$
    at point $f^{j+\powk N}(x)$.
\end{lm}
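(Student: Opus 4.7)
The plan is to adapt the proof of Lemma \ref{lm:nlin_lower_bd} (the $\powk=1$ case) by tracking how the extra powers of $\lambda^j$ and $\maxdf^j$ propagate when the summation index starts at $j$ and advances in steps of $\powk$. Starting from the mean value representation
\[
S^{(j)}_{0,N}(x,h) = h \sum_{i=0}^{N} v'(f^{j+\powk i}(\xi_i)) \frac{(f^{j+\powk i})'(\xi_i)}{(f^{j+\powk i+1})'(x)^\theta}
\]
given just before the statement, I would isolate the single leading term $i=N$ and treat the tail $0 \le i \le N-1$ as an error, applying the reverse triangle inequality.

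For the leading term, the condition \eqref{neq:cond_to_use_bd}, i.e.\ $\delta_2 < \maxdf^{-j}$, together with $h \Cbdistort (f^{\powk N})'(x) \le \delta_2$, gives $h \Cbdistort (f^{j+\powk N})'(x) \le \maxdf^j \delta_2 \le 1$. Hence the distortion estimate (Lemma \ref{lm:bdistort}) applies across the full $j+\powk N$ iterates and yields $(f^{j+\powk N})'(\xi_N) \ge (f^{j+\powk N})'(x)/\Cbdistort$. Combined with $f'(f^{j+\powk N}(x)) \le \maxdf$ this lower-bounds the derivative ratio by $(f^{j+\powk N})'(x)^{1-\theta}/(\Cbdistort \maxdf^\theta)$. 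A triangle inequality then splits $\vmod{v'(f^{j+\powk N}(\xi_N)) - v'(f^j(c))}$ into a piece of size at most $\Cdvtmp(\maxdf^j \delta_2)^\eps$, using $\vmod{f^{j+\powk N}(\xi_N)-f^{j+\powk N}(x)} \le \Cbdistort (f^{j+\powk N})'(x) h$, and a piece $\Cdvtmp \delta^\eps$ coming from the hypothesis $\dist(f^{j+\powk N}(x),f^j(c))\le \delta$, producing the two Holder error terms in the claim.

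For the tail I would use the opposite distortion direction $(f^{j+\powk i})'(\xi_i) \le \Cbdistort (f^{j+\powk i})'(x)$ and $f'(f^{j+\powk i}(x)) \ge \lambda$ to bound each summand by $\maxdv \Cbdistort (f^{j+\powk i})'(x)^{1-\theta}/\lambda^\theta$. The key step is to rewrite $(f^{j+\powk i})'(x) = (f^{j+\powk N})'(x)/(f^{\powk(N-i)})'(f^{j+\powk i}(x))$ via \eqref{eq:dfdivide}, bound the denominator below by $\lambda^{\powk(N-i)}$, and sum the resulting geometric series in $\lambda^{-\powk(1-\theta)}$ starting from $p = N-i = 1$. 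This produces the factor $\lambda^{-\powk(1-\theta)}/(\lfothk)$ which, combined with the earlier $\lambda^{-\theta}$, gives exactly the denominator $\lambda^{\powk(1-\theta)+\theta}(\lfothk)$ appearing in the tail error of the claim.

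The final step is bookkeeping: converting the common factor $h \cdot (f^{j+\powk N})'(x)^{1-\theta}$ into the normalization $h^\theta \delta_1^{1-\theta} \lambda^{j(1-\theta)}/(\Cbdistort^{2-\theta} \maxdf^\theta)$ on the left-hand side, using the lower bound $h \Cbdistort (f^{\powk N})'(x) \ge \delta_1$ together with $(f^{j+\powk N})'(x) \ge \lambda^j (f^{\powk N})'(x)$ for the leading term and $(f^{j+\powk N})'(x) \le \maxdf^j (f^{\powk N})'(x)$ for the tail. The $\maxdf^{j(1-\theta)+\theta}/\lambda^{j(1-\theta)}$ coefficient in the stated tail error is precisely what is left after the tail estimate is rescaled by this normalization. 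The main obstacle is keeping these $\maxdf^j$, $\lambda^j$, $\Cbdistort^{2-\theta}$ and step-size-$\powk$ factors straight so that after dividing through everything matches the stated coefficients; the only structurally non-decorative ingredient is assumption \eqref{neq:cond_to_use_bd}, which is exactly what makes the distortion estimate available across all $j + \powk N$ iterates rather than only $\powk N$ of them.
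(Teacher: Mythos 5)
Your proposal is correct and follows essentially the same route as the paper's proof: isolate the $i=N$ term, use the reverse triangle inequality on the tail, invoke condition \eqref{neq:cond_to_use_bd} to make the distortion estimate available across all $j+\powk N$ iterates, extract the $\lambda^{j(1-\theta)}$ and $\maxdf^{j(1-\theta)}$ factors from the extra $j$ iterates, and sum the geometric series in $\lambda^{-\powk(1-\theta)}$. The only differences from the paper's computation are cosmetic bookkeeping choices: you apply the distortion estimate once to $(f^{j+\powk N})'$ and then use the crude bounds $(f^{j+\powk N})'(x) \ge \lambda^{j}(f^{\powk N})'(x)$ and $\le \maxdf^{j}(f^{\powk N})'(x)$, whereas the paper keeps $\xi_i$ in the derivative ratios, applies distortion separately to $(f^{j+\powk N})'$ and $(f^{\powk N})'$, and only then replaces $(f^{j})'$ by its $\lambda$ or $\maxdf$ bound; both give the same $\Cbdistort^{2-\theta}$ in the normalization and $\Cbdistort^{2}$ in the tail coefficient after the final rescaling. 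Your accounting of the two Hölder error terms $\Cdvtmp\delta_2^{\eps}\maxdf^{j\eps}$ and $\Cdvtmp\delta^{\eps}$ is also exactly as in the paper.
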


\begin{proof}[Proof of the lemma]

        Note that
        using the upper bound on $h$
        we obtain for some $\xi\in (x,\xi_N) \subset (x,x+h)$
    \begin{gather*}
        \vmod{ v'(f^{j+ \powk N}(\xi_N)) - 
        v'(f^{j+\powk N}(x)) }
        \leq  
        \Cdvtmp \vmod{ f^{j+ \powk N}(\xi_N) - f^{ j +\powk N}(x) }^\eps
        =
        \\
        =
        \Cdvtmp \vmod{ \dfxx{j + \powk N}{\xi} (\xi_N - x)}^\eps 
        =
        \Cdvtmp  \vmod{ \dfxx{j}{f^{\powk N} (\xi) }
            \dfxx{\powk N}{\xi} (\xi_N - x)}^\eps 
            \leq
        \Cdvtmp  \maxdf^{j\eps} \delta_2^\eps .
    \end{gather*}
    We also have that
    $    \vmod{ v'(f^{j+\powk N}(x) )  - v'(f^j(c)) } 
        \leq 
        \Cdvtmp \delta^\eps$.

    Inequality \eqref{neq:cond_to_use_bd} allows
    us to use the distortion estimates for all 
    indices $0\leq i \leq j+\powk N$, therefore
    we get 
    \begin{gather*}
        h^{-1} \vmod{ S^{(j)}_{0,N}(x,h) } 
        =
        \vmod{
            \sum_{i=0}^{N} v'(f^{j+\powk i}(\xi_i))
        \frac{ \dfxx{j+\powk i}{\xi_i}   }
            {\dfxthx{j+\powk i}{x }} 
            \frac{ 1 }{ \dfxthxp{ f^{j +\powk i}(x) } } 
    } \geq 
    \\
        \geq
        \vmod{v'(f^{j+\powk N}(\xi_N))}
            \frac{ \dfxx{j+\powk N}{\xi_N}   }
            {\dfxthx{j+\powk N}{x }} 
            \frac{ 1 }{ \dfxthxp{ f^N(x) }  }  
            -
            \\
            -
            \sum_{i=0}^{N-1} \vmod{v'(f^{j+\powk i}(\xi_i))}
            \frac{ \dfxx{j+\powk i}{\xi_i}   }
                 {\dfxthx{j+\powk i}{x }} 
                 \frac{ 1 }{ \dfxthxp{ f^{j+\powk i}(x) }  } 
            \geq
            \\
            \geq 
            \vmod{v'(f^{j+\powk N} (\xi_N))}
            \frac{ \dfxoth{j+\powk N}{\xi_N}   }
            { \Cbdistort^{\theta} \maxdf^\theta  }  
            -
            \sum_{i=0}^{N-1} \vmod{v'(f^{j+\powk i}(\xi_i))}
            \frac{ \Cbdistort^\theta 
                \dfxoth{j+\powk i}{\xi_i}   }
            { \lambda^\theta  } 
    \end{gather*}

    Using \eqref{eq:dfdivide}, 
    the lower bound on $h$, 
     and \eqref{eq:vDiff},
    %and \eqref{eq:Nofdeltadef_nlin}
    we can write the following estimates:

    \begin{gather*}
        h^{-\theta} \vmod{ S^{(j)}_{0,N}(x,h) } 
            \geq 
            \delta_1^{1-\theta} 
            \frac{v'(f^{j+\powk N} (\xi_N))}
        { \Cbdistort \maxdf^\theta  }  
        \frac{ \dfxoth{j+\powk N}{\xi_N}   }
        { \dfxoth{\powk N}{x} }
        -
        \\
        -
            \delta_1^{1-\theta} 
        \sum_{i=0}^{N-1} 
        \frac{\vmod{v'(f^{j+\powk i}(\xi_i))} }
        { \lambda^\theta \Cbdistort^{1-2\theta} } 
        \frac{  
            \dfxoth{j+\powk i}{\xi_i}   }
        { \dfxoth{\powk N}{x} }
        \geq
        \\
        \geq
            \delta_1^{1-\theta} 
            \frac{ \vmod{v'(f^{j+\powk N} (\xi_N))} }
            { \Cbdistort^{2-\theta} \maxdf^\theta  }  
            \dfxoth{j}{f^{\powk N}(\xi_N)}   
            -
            \\
            -
            \delta_1^{1-\theta} 
        \sum_{i=0}^{N-1} 
        \frac{ \maxdv \Cbdistort^{\theta}}
        { \lambda^\theta  } 
        \frac{ \dfxoth{j}{f^{\powk i}(\xi_i) }   }
        {\dfxoth{\powk(N-i)}{f^{\powk i}(\xi_i) }}
        \geq
        \\
        \geq
        \delta_1^{1-\theta} 
        \frac{ \vmod{v'(f^{j+\powk N} (\xi_N))} }
        { \Cbdistort^{2-\theta} \maxdf^\theta  }  
        \lambda^{j(1-\theta)}
        -
        \delta_1^{1-\theta} 
        \frac{ \maxdv \Cbdistort^{\theta}}
        { \lambda^\theta  } 
        \sum_{i=0}^{N-1} 
        \frac{\maxdf^{j(1-\theta)} }
        {\lambda^{ \powk (N-i) (1-\theta) } }
        \geq
        \\
        \geq
        \frac{\delta_1^{1-\theta}  }
        { \Cbdistort^{2-\theta} \maxdf^\theta  }  
        \lambda^{j(1-\theta)}
        \enbrace{ v'(f^{j} (c)) -
        \Cdvtmp \delta_2^\eps \maxdf^{j\eps} 
        - \Cdvtmp \delta^\eps
        } 
        -
        \frac{ \maxdv \Cbdistort^{\theta}
        \delta_1^{1-\theta} }
        { \lambda^{\powk(1-\theta)+\theta} (\lfothk)  }  
        \maxdf^{j(1-\theta) }
        .
    \end{gather*}
        
\end{proof}

    %%%%%%%%%%%%%%%%%%%%%%%%%%%%%%%
    %%%%%%%%%%%%%%%%%%%%%%%%%%%%%%%
    %%%%%%%%%%%%%%%%%%%%%%%%%%%%%%%

    % Lower bound

    Assume that condition (A) is satisfied for 
    a point $c$ and $\powk\geq 1$.
    Suppose without restricting generality that $v'(c)>0$ (this
    implies that $\dvcj > 0$ for every $0\leq j\leq \powk-1$ by condition (A) ).

    Fix $\hat{h}$.
    We first give expressions for $\delta_1,\delta_2$
    that depend only on $f,v$ and $\powk$
    and later
    select $x$, $N$ and $h = h(N,\delta_1,\delta_2)$
    such that $ \vmod{\alpha(x) - \alpha(x+h) } / h^\theta$ 
    has a lower bound.
    %and a point $x$ with a dense trajectory.

    For every
    $0<\delta^{(j)}_1,\delta^{(j)}_2 \leq \maxdf^{-j}$
    for every $x$ for which  
    there exists a natural number $N>\powk$ such that
      $  \vmod{f^{\powk N}(x) - c} < 
        \delta^{(j)}_2 \maxdf^{-\powk+1}$,
    lemmas above (\ref{lm:nlin_small_supp_bounds}
    and \ref{lm:nlin_lower_bd_small_lbd} for $\delta=\delta_2\maxdf^{j}$) 
    imply that
        for every
        $h = h(N)$ such that 
    \begin{gather*} % \label{eq:Ndef}
          \delta^{(j)}_1
        \leq h  { \Cbdistort \dfxx{N}{x} }
        \leq
          \delta^{(j)}_2 ,
    \end{gather*}
    the following bound holds for every $0\leq j\leq \powk-1$:
        \begin{gather}
         h^{-\theta}
            B_j(x,h) 
        \frac
        { \Cbdistort^{2-\theta} \maxdf^\theta  }  
        { (\delta^{(j)}_1)^{1-\theta}  \lambda^{j(1-\theta)}}
        \geq      \nonumber
         v'(f^{j} (c)) -
        2\Cdv (\delta^{(j)}_2)^\eps 
         \maxdf^{j\eps} 
        -
                    \\
        -
        \frac{ \maxdv \Cbdistort^{2}
         }
        { \lambda^{\powk(1-\theta)+\theta} (\lfothk)  }  
        \frac{ \maxdf^{j(1-\theta) +\theta} }
        { \lambda^{j(1-\theta)} }
                -  \nonumber
                \\
                -
                \frac{2 \Cbdistort^{2-\theta}  \maxdf^\theta
        \maxv}{ \lambda^{(\powk+1)\theta+j}
    ( \lfthk ) } (\delta^{(j)}_1)^{-1} 
        \label{neq:final_lower_bd_nonlin}
        .
        \end{gather}
        Note that the right-hand side of 
        this expression does not depend on $x,N$ or $h$.

        It is possible to choose $\delta^{(j)}_1,\delta^{(j)}_2>0$ in such a way
    that the expression above is greater than zero.
    To guarantee that the second and the fourth summands in the right-hand
    side of \eqref{neq:final_lower_bd_nonlin} are both less than
    $\dvcj/3$ it is enough to put 
    \begin{gather*}
        \delta^{(j)}_1 \leq D^{(j)}_1 = \frac{6 \maxv \Cbdistort^{2-\theta} \maxdf^\theta }
        {(\lfthk)   \lambda^{(\powk+1)\theta + j} \dvcj },
        \\
        \delta^{(j)}_2 \geq D^{(j)}_2 = \enbrace{ \frac{\dvcj}{6\Cdv}}^{1/{\eps}}
        \frac{1}{\maxdf^{j }  }
        .
    \end{gather*}
    %Note that here $D^{(j)}_1$ and $D^{(j)}_2$ depend on $j$.

    To be able to select $\delta_1\leq \delta_2$ in such a way
    so that
    they satisfy bounds above
    %give positive expression
    for every $j$
    but do not depend on $j$,
    we use inequality \eqref{neq:A_bd1} from condition (A).
    It exactly means that 
    \begin{equation*}
        \frac{6 \maxv \Cbdistort^{2-\theta} \maxdf^\theta }
        {(\lfthk)   \lambda^{(\powk+1)\theta } \mindvcj } 
        =  \max_j D^{(j)}_1 \leq
        \min_j D^{(j)}_2
        =
         \enbrace{ \frac{\mindvcj}{6\Cdv}}^{1/{\eps}}
        \frac{1}{\maxdf^{\powk - 1 }  }
        ,
    \end{equation*}
    where 
        $\mindvcj = \min_{0 \leq j \leq \powk-1} {v'(f^j(c))}$
    %and $\Cdvjz$ is the maximum of local $\eps$-\holder constants for $v'$
    %over $0\leq j \leq \powk -1$ at points $f^{j}(c)$.

    Put 
    \begin{gather*}
        \delta_1 = \max_{0 \leq j \leq \powk-1} D^{(j)}_1,
        \nonumber \quad 
        \delta_2 = \min_{0 \leq j \leq \powk-1} D^{(j)}_2. 
        %\label{eq:delta_2_final_def}
    \end{gather*}
    Inequality \eqref{neq:A_bd3} from condition (A) 
    implies that $\delta_2 \leq \maxdf^{-\powk+1}$.

    Now fix a point $x$ such that there exists
    a natural number $N$ such that
    \begin{gather*}
        \vmod{f^{\powk N}(x) - c} < 
        \delta_2 \maxdf^{-\powk+1}; 
       %  \label{neq:Nchoose1}
        \quad
        \frac{  \delta_2 }{ \Cbdistort \dfxx{N}{x} }   
         < \hat{h}
       %  \label{neq:Nchoose2}
         .
    \end{gather*}
    One can take $x$ to be a point from the
    $\powk N$'th-preimage of $B_{\delta_2}(c)$ for sufficiently large $N$ or
    any point with a
    dense trajectory (the set of which has full Lebesgue measure).
    In the latter case $x$ does not depend on $\hat{h}$.

    Now inequality \eqref{neq:A_bd2} from condition (A)
    and the definition of $\delta_1,\delta_2$
    allow us to select a (single) $h < \hat{h}$ such that 
    expression \eqref{neq:final_lower_bd_nonlin} is larger than
    $\dvcj / 12$ for every $j$.

    It implies that
    \begin{gather*}
        \frac{ \vmod{ \alpha(c) - \alpha(c+h) } } { h^\theta}
        %\sum_j^{\powk-1} B_j(x,h)  / h^\theta
        \geq
        \sum_{j=0}^{\powk -1 } 
         \Cbdistort^{\theta-2} \delta_1^{1-\theta} 
                \frac{ \lambda^{j(1-\theta) } } 
                {\maxdf^{\theta} }  
                \frac{\dvcj }{12}
        - 
        \\
        -
                % nonlin part
        \frac{\delta_1^{1-\theta}}  {\maxdf^{\theta} \Cbdistort}
            \enbrace{
        \frac{ \enbrace{\lambda^{-s\theta} + \Cbdistort^{\theta}
            \enbrace{ \frac{\maxdf}{\lambda^2}}^{s\theta} 
        }  
    \maxv }{ \lfth } 
        \delta_1^{-\theta} h^\theta 
         +
     \Csecest(s) \powk N h  }
     .
    \end{gather*}

    Then as in the case of $\powk =1 $,
    by choosing $s$ large enough and 
    increasing 
    $N$ if necessary (depending on the choice of $s$), we can 
    make the last 
    summand (coming from the nonlinearity of $f$)
    less than a half of the first sum
    because of pinching condition.

    Finally 
    \begin{equation*}
        \frac{ \vmod{\alpha(c) - \alpha(c+h) } } { h^\theta}
        \geq
        C_0 > 0,
    \end{equation*}
    where
    \begin{gather*}
        C_0  =  \frac{1}{2} \sum_{j=0}^{\powk -1 } 
         \Cbdistort^{\theta-2} \delta_1^{1-\theta} 
                \frac{ \lambda^{j(1-\theta) } } 
                {\maxdf^{\theta} }  
                \frac{\dvcj }{12}    
                =    \nonumber
                \\
                =
                \frac { \Cbdistort^{\theta-2} }
                {24 \maxdf^{\theta}}
                \enbrace{ \frac{6 \maxv \Cbdistort^{2 -\theta}
         \maxdf^\theta }
         {(\lfthk)   \lambda^{(\powk+1)\theta } \mindvcj }  }^{1-\theta}
                 \sum_{j=0}^{\powk -1 } 
                { \lambda^{j(1-\theta) } }  
                {\dvcj }
                =            \nonumber
                \\
                =
                \frac{6^{1-\theta}  }
                { 24  \Cbdistort^{\theta(1-\theta)} 
                \maxdf^{\theta^2} }
                \enbrace{ \frac{ \maxv   }
         {  (\lfthk)  \mindvcj }  }^{1-\theta}
         \cdot
         \\
         \cdot
          \lambda^{(\powk+1)\theta(\theta-1) }
                 \sum_{j=0}^{\powk -1 } 
                { \lambda^{j(1-\theta) } }  
                {\dvcj }
                .
                %\label{eq:C0formula}
    \end{gather*}
    in particular, $C_0$ does not depend on $h,\hat{h}$.

%\bibliography{bibrefs}{}

\begin{thebibliography}{10}

\bibitem{BALADI_SMANIA_PE_LINRESP}
Viviane Baladi and Daniel Smania.
\newblock Linear response formula for piecewise expanding unimodal maps.
\newblock {\em Nonlinearity}, 21(4):677--711, 2008.

\bibitem{BARAN_DIM_W}
Krzysztof Bara{\'n}ski, Bal{\'a}zs B{\'a}r{\'a}ny, and Julia Romanowska.
\newblock On the dimension of the graph of the classical {W}eierstrass
  function.
\newblock {\em Adv. Math.}, 265:32--59, 2014.

\bibitem{BEDFORD}
Tim Bedford.
\newblock Hausdorff dimension and box dimension in self-similar sets.
\newblock In {\em Proceedings of the {C}onference: {T}opology and {M}easure,
  {V} ({B}inz, 1987)}, Wissensch. Beitr., pages 17--26. Ernst-Moritz-Arndt
  Univ., Greifswald, 1988.

\bibitem{BEDFORD_BOX}
Tim Bedford.
\newblock The box dimension of self-affine graphs and repellers.
\newblock {\em Nonlinearity}, 2(1):53--71, 1989.

\bibitem{BEDFORD_HOL}
Tim Bedford.
\newblock H\"older exponents and box dimension for self-affine fractal
  functions.
\newblock {\em Constr. Approx.}, 5(1):33--48, 1989.
\newblock Fractal approximation.

\bibitem{BOUSCH_MANUSCRIPT}
Thierry Bousch and Yanick Heurteaux.
\newblock On oscillations of weierstrass-type functions.
\newblock {\em manuscript}, 1998.

\bibitem{CALOR_BOUSCH}
Thierry Bousch and Yanick Heurteaux.
\newblock Caloric measure on domains bounded by {W}eierstrass-type graphs.
\newblock {\em Ann. Acad. Sci. Fenn. Math.}, 25(2):501--522, 2000.

\bibitem{ANTI_HOL_WAVELETS}
Marianne Clausel and Samuel Nicolay.
\newblock Wavelets techniques for pointwise anti-{H}\"olderian irregularity.
\newblock {\em Constr. Approx.}, 33(1):41--75, 2011.

\bibitem{AMANDA_THESIS}
Amanda de~Lima.
\newblock Modulus of continuity of solutions of cohomological equations for
  transversal families of piecewise expanding unimodal maps.
\newblock {\em in preparation}.

\bibitem{ANFRAC}
Yann Demichel and Claude Tricot.
\newblock Analysis of the fractal sum of pulses.
\newblock {\em Math. Proc. Cambridge Philos. Soc.}, 141(2):355--370, 2006.

\bibitem{FALCONER_FRAC}
Kenneth Falconer.
\newblock {\em Fractal geometry}.
\newblock John Wiley \& Sons, Ltd., Chichester, 1990.
\newblock Mathematical foundations and applications.

\bibitem{WALK_ANOSOV}
D.~Hadjiloucas, M.~J. Nicol, and C.~P. Walkden.
\newblock Regularity of invariant graphs over hyperbolic systems.
\newblock {\em Ergodic Theory Dynam. Systems}, 22(2):469--482, 2002.

\bibitem{HARDY}
G.~H. Hardy.
\newblock Weierstrass's non-differentiable function.
\newblock {\em Trans. Amer. Math. Soc.}, 17(3):301--325, 1916.

\bibitem{WRANDOM_PHASE}
Yanick Heurteaux.
\newblock Weierstrass functions with random phases.
\newblock {\em Trans. Amer. Math. Soc.}, 355(8):3065--3077 (electronic), 2003.

\bibitem{WZYG}
Yanick Heurteaux.
\newblock Weierstrass functions in {Z}ygmund's class.
\newblock {\em Proc. Amer. Math. Soc.}, 133(9):2711--2720 (electronic), 2005.

\bibitem{SPACE_FILL}
S.~Jaffard and S.~Nicolay.
\newblock Pointwise smoothness of space-filling functions.
\newblock {\em Appl. Comput. Harmon. Anal.}, 26(2):181--199, 2009.

\bibitem{KAPLAN_YORKE}
James~L. Kaplan, John Mallet-Paret, and James~A. Yorke.
\newblock The {L}yapunov dimension of a nowhere differentiable attracting
  torus.
\newblock {\em Ergodic Theory Dynam. Systems}, 4(2):261--281, 1984.

\bibitem{KELLER_DIM}
Gerhard Keller.
\newblock An elementary proof for the dimension of the graph of the classical
  weierstrass function.
\newblock {\em arXiv preprint}, 2014.

\bibitem{MANE_ERGTH}
Ricardo Ma{\~n}{\'e}.
\newblock {\em Ergodic theory and differentiable dynamics}, volume~8 of {\em
  Ergebnisse der Mathematik und ihrer Grenzgebiete (3) [Results in Mathematics
  and Related Areas (3)]}.
\newblock Springer-Verlag, Berlin, 1987.
\newblock Translated from the Portuguese by Silvio Levy.

\bibitem{HAUS_GRAPHS}
R.~Daniel Mauldin and S.~C. Williams.
\newblock On the {H}ausdorff dimension of some graphs.
\newblock {\em Trans. Amer. Math. Soc.}, 298(2):793--803, 1986.

\bibitem{OTANI}
Atsuya Otani.
\newblock An entropy formula for a non-self-affine measure with application to
  weierstrass-type functions.
\newblock 2015.

\bibitem{HAUS_SETS}
F.~Przytycki and M.~Urba{\'n}ski.
\newblock On the {H}ausdorff dimension of some fractal sets.
\newblock {\em Studia Math.}, 93(2):155--186, 1989.

\bibitem{SHEN}
Weixiao Shen.
\newblock Hausdorff dimension of the graphs of the classical weierstrass
  functions.
\newblock 2015.

\bibitem{WEIER}
Karl Weierstrass.
\newblock On continuous functions of a real argument that do not have a
  well-defined differential quotient.
\newblock {\em Mathematische werke, band II}, pages 71--74, 1895.

\end{thebibliography}
%\bibliographystyle{plain}

\end{document}